\setlist{itemjoin ={,\enspace},itemjoin* = {\enspace}}
\newcommand*\bigcdot{\mathpalette\bigcdot@{.5}}
\newcommand*\bigcdot@[2]{\mathbin{\vcenter{\hbox{\scalebox{#2}{$\m@th#1\bullet$}}}}}
\newenvironment{myfont}{\fontfamily{phv}\selectfont}{\par}
\def\beg{\begin{myfont}}
\def\en{\end{myfont}}
\def\tb{\textbf}
\def\t{\text}
\def\bp{\begin{proof}}
\def\ep{\end{proof}}
\def\be{\begin{enumerate}}
\def\ee{\end{enumerate}}
\def\bi{\begin{itemize}}
\def\ei{\end{itemize}}
\def\multiset#1#2{\ensuremath{\left(\kern-.3em\left(\genfrac{}{}{0pt}{}{#1}{#2}\right)\kern-.3em\right)}}
\newtheorem{lemma}{Lemma}[section]
\newtheorem{theorem}[lemma]{Theorem}
\newtheorem{corollary}[lemma]{Corollary}
\newtheorem{proposition}[lemma]{Proposition}
\theoremstyle{definition}
\newtheorem{definition}[lemma]{Definition}
\newtheorem{remark}[lemma]{Remark}
\title{Classifications of $\Gamma$-colored minuscule posets and \\ $P$-minuscule Kac--Moody representations\thanks{The main classification results of this paper appeared in a 2019 doctoral thesis \cite{Str} written under the supervision of Robert A. Proctor at the University of North Carolina.}}
\author{Michael C. Strayer \\ Hampden--Sydney College \\ Hampden--Sydney, VA 23943 U.S.A. \\ mstrayer@hsc.edu}
\date{\today}                                           % Activate to display a given date or no date
\begin{document}

\maketitle 

\begin{spacing}{1.1}

%\begin{center}
%    {\Large{\tb{Unified characterizations of minuscule Kac--Moody representations \\ built from colored posets}\footnote{To be contained in a University of North Carolina at Chapel Hill Ph.D. thesis written under the supervision of Robert A. Proctor.}}}
    
%    \vs
    
%    Michael C. Strayer
    
%    \vs 
    
%    University of North Carolina
    
%    \vspace{-.05in}
    
%    Chapel Hill, NC 27599 U.S.A.
    
%    \vspace{-.05in}
    
%    mcs80@live.unc.edu
    
%    \vs 
    
%    \today 
%\end{center}

%\centerline{\tb{Abstract}}

%\vs 

\begin{abstract}
%: (Documented) Added one sentence to the end of the abstract to include coroots.
The $\Gamma$-colored $d$-complete and $\Gamma$-colored minuscule posets unify and generalize multiple classes of colored posets introduced by R.A. Proctor, J.R. Stembridge, and R.M. Green.  In previous work, we showed that $\Gamma$-colored minuscule posets are necessary and sufficient to build from colored posets certain representations of Kac--Moody algebras that generalize minuscule representations of semisimple Lie algebras.  In this paper we classify $\Gamma$-colored minuscule posets, which also classifies the corresponding representations.  We show that $\Gamma$-colored minuscule posets are precisely disjoint unions of colored minuscule posets of Proctor and connected full heaps of Green.  Connected finite $\Gamma$-colored minuscule posets can be realized as certain posets of coroots in the corresponding finite Lie type.
\end{abstract}

\vspace{.2in} 

\noindent 2020 Mathematics Subject Classification: Primary 05E10; Secondary 17B10, 17B67, 06A11% Primary 05E10; Secondary 17B10, 17B67, 06A11}

\noindent Keywords: Minuscule, Full heap, $d$-Complete, Dominant minuscule heap, Kac--Moody representation% $d$-Complete, Minuscule, Full heap, Borel subalgebra, Representation}

\vspace{-.2in}

%\tableofcontents

%\vsl 

%\centerline{\tb{Table of Contents}}

%\vs 

%\ni 119: Introduction \dotfill 2

%\ni 236: Combinatorial definitions \dotfill 5

%\ni 251: Algebraic definitions \dotfill 6

%\ni 284: Representations of Lie algebras built from colored posets \dotfill 7

%\ni 317: Square nilpotent representations of $\mf{n}_+$ and $\mf{n}_-$ \dotfill 8

%\ni 361: Square nilpotent representations of $\mf{b}'_+$ and $\mf{b}'_-$ \dotfill 11

%\ni 414: A combinatorially motivated component weight function \dotfill 14

%\ni 453: Existence and uniqueness for $\mf{sl}_2$ weights along color strings \dotfill 16

%\ni 522: Frontier census properties and eigenvalue bounds \dotfill 18

%\ni 609: Upper $P$-minuscule representations of $\mf{b}'_+$ \dotfill 19

%\ni 631: $P$-minuscule representations of $\mf{g}'$ \dotfill 22

%\ni 668: $\Gamma$-colored $d$-complete and $\Gamma$-colored minuscule posets \dotfill 24

%\ni 713: Classifications of posets and representations, axiom comparisons \dotfill 24

%\ni 817: Representation remarks, abstract minuscule representations \dotfill 26

\end{spacing}

%\newpage 

\begin{spacing}{1.15}

\section{Introduction}

%: Moved the end quotations to behind the word minuscule rather than behind the word representations.
This paper is the third in a series containing \cite{Unify} and \cite{dC-class}.  In \cite{Str,Unify}, we introduced two new axiomatic definitions of locally finite posets colored by the nodes of a Dynkin diagram.  These ``$\Gamma$-colored $d$-complete'' and ``$\Gamma$-colored minuscule'' posets correspond to particular types of representations of Kac--Moody algebras (or subalgebras) that generalize the minuscule representations of semisimple Lie algebras.  We classified all $\Gamma$-colored $d$-complete posets in \cite{dC-class} and applied this classification to $\lambda$-minuscule Weyl group elements of D. Peterson \cite{Car} and to ``upper $P$-minuscule'' representations.  In this paper, we classify all $\Gamma$-colored minuscule posets and, consequently, all corresponding ``$P$-minuscule'' representations.  For connected posets, these classifications are summarized in Table \ref{RepClass}.  This table shows that the $\Gamma$-colored minuscule and $\Gamma$-colored $d$-complete posets provide a unified axiomatic framework for colored minuscule posets of R.A. Proctor \cite{BLPP}, dominant minuscule heaps of J.R. Stembridge \cite{Ste} (which are reformulations of the colored $d$-complete posets of Proctor \cite{Wave}), and full heaps of R.M. Green \cite{Gre}.

\begin{table}[h!]
    \centering
    \begin{tabular}{|l||c|c|}
        \hline 
         \textbf{Connected posets} & \textbf{Finite} & \textbf{Infinite} \\
         \hline 
         \hline 
        \textbf{$\boldsymbol{\Gamma}$-colored minuscule} & Colored minuscule posets & Full heaps \\
        First introduced by: & R.A. Proctor (1984) & R.M. Green (2007) \\
        \hline 
        \textbf{$\boldsymbol{\Gamma}$-colored $\boldsymbol{d}$-complete} & Dominant minuscule heaps & Filters of full heaps \\
        First introduced by: & J.R. Stembridge (2001) & This author (2019) \\
        \hline
    \end{tabular}
    \caption{The classifications of connected $\Gamma$-colored minuscule and $\Gamma$-colored $d$-complete posets.}
    \label{RepClass}
\end{table}

%: Two sentences are added to introduce the coroot realization of minuscule posets and to point to Section 9 where more detail will be given.

%: Update: Combined the second new sentence with the old final sentence of the paragraph.

A minuscule representation of a semisimple Lie algebra is an irreducible highest weight representation in which all weights are in the Weyl group orbit of the highest weight.  The weight diagrams of these representations are distributive lattices under the standard order on weights.  Proctor proved this fact in \cite{BLPP} and used it to introduce ``irreducible minuscule posets'' as the posets of join irreducible elements of these weight diagrams.  While initially uncolored, he then assigned a simple root to each element of the poset in Theorem 11 of that paper; we view this assignment as a coloring of the poset with the nodes of the associated Dynkin diagram.  
This coloring relies on realizing minuscule posets as certain subsets of coroots for the associated simple Lie algebra.  We review
Proctor's work from \cite{BLPP} in Section \ref{SectionHistory}, and in Theorem \ref{ThmColoredCoroots} we obtain this coroot realization in our axiomatic setting for connected finite $\Gamma$-colored minuscule posets.

%realize connected finite $\Gamma$-colored minuscule posets as these posets of colored coroots in Theorem \ref{ThmColoredCoroots}.}
%these constructions 
%in Section \ref{SectionHistory} and 
%realize connected finite $\Gamma$-colored minuscule posets as the same sets of colored coroots, which explains the top left entry in Table \ref{RepClass}.
%obtain a new version of \cite[Thm. 11]{BLPP} for $\Gamma$-colored minuscule posets,
%and relate them to $\Gamma$-colored minuscule posets 
%which explains the top left entry in Table \ref{RepClass}.

Both uncolored and colored minuscule posets have appeared in numerous applications.  They were used to provide a Littlewood--Richardson rule to calculate the ($K$-theoretic) Schubert structure constants $c_{\lambda,\mu}^\nu$ of minuscule varieties (e.g. see \cite{ThYo,BuSa}).  
The rowmotion action on the order ideals of a minuscule poset exhibits the cyclic sieving phenomenon \cite{RushI}, the order ideal cardinality statistic is homomesic \cite{RushII}, and the lattice of order ideals satisfies the coincidental down-degree expectations property \cite{Hop,RushIII}; see \cite[\S 3]{Roby} for a survey of some of these topics. 
N.J. Wildberger used colored minuscule posets in Lie types $A$, $D$, and $E$ to construct simple Lie algebras of these types as well as Chevalley bases for these algebras in \cite{Wil}; see also \cite[\S 7.2]{Gre}.
Of combinatorial interest, Proctor showed that minuscule posets are Sperner and (with R. Stanley's help) Gaussian; see \cite[\S 6]{BLPP} and \cite[\S 11.3]{Gre} for the latter property.
%\tcb{Add sentence here about the ``$q = -1$'' phenomenon with application to enumerating plane partitions and Stembridge's paper?}
Minuscule posets were generalized to uncolored and colored $d$-complete posets by Proctor \cite{Wave,DDCT}, and later to dominant minuscule heaps by Stembridge \cite{Ste}, to study $\lambda$-minuscule Weyl group elements.  These finite posets have also been studied extensively and used in many applications; see \cite{Pechenik,NaOk,Kokyuroku,ProScop,Ste,dC-class}.

While the original uncolored and colored minuscule and $d$-complete posets are finite, full heaps are unbounded above and below by definition.  Aside from this difference, the defining axioms for full heaps are mostly an adaptation of Stembridge's defining axioms for dominant minuscule heaps.  Green introduced these colored posets in \cite{Gre1,Gre2} and wrote a Cambridge monograph \cite{Gre} exploring many of their uses in representation theory and algebraic geometry.  The extended slant lattices of M. Hagiwara \cite{Hag2} are early appearances of full heaps colored by Dynkin diagrams of affine type $\tilde{A}$.  Our original work \cite{Str,Unify} was inspired by Green's construction of representations of affine Kac--Moody algebras using raising and lowering operators defined on the lattices of ``proper ideals'' of full heaps.  Each full heap has a unique ``principal subheap'' up to isomorphism, which is one of the colored minuscule posets of Proctor.  It appears in an infinitely repeating motif within the full heap.  These embedded principal subheaps provide a connection between Green's representations of affine Kac--Moody algebras and minuscule representations of semisimple Lie algebras, as the latter can be viewed as embedded in the former; see \cite[Prop. 5.5.5]{Gre}.

The main result of this paper is the classification of all $\Gamma$-colored minuscule posets in Theorem \ref{ThmMinusculeClassify}.  This classification handles the finite and infinite poset cases separately.  
Additionally, we split the finite case into simply laced and multiply laced cases.
%In the finite case, we handle the simply laced and multiply laced cases separately.  
To handle the finite multiply laced case, we apply Stembridge's classification \cite{Ste} of dominant minuscule heaps colored by multiply laced Dynkin diagrams.  This result was an extension of Proctor's classification \cite{DDCT} of $d$-complete posets, which exist only in the simply laced case.  
However, our approach in the simply laced case does not use Proctor's classification (see Section \ref{SectionSLTopTrees}).
%In the finite case, we apply Stembridge's classification \cite{Ste} of dominant minuscule heaps colored by multiply laced Dynkin diagrams.  This result extended Proctor's classification \cite{DDCT} of $d$-complete posets to the multiply laced case, though we do not use Proctor's result here.  
In the infinite case, we apply the classification of connected full heaps by Green and Z.S. McGregor-Dorsey \cite{Gre,McD}.  In both cases, we also use several results from \cite{dC-class} developed for the more general $\Gamma$-colored $d$-complete posets; we restate these results in Section \ref{SectionDefinitions} for the reader's convenience.  Theorem \ref{ThmMinusculeClassify} and Theorem 26 of \cite{dC-class} combine to fill in and justify Table \ref{RepClass}.

%The main result of this paper is the classification of all $\Gamma$-colored minuscule posets in Theorem \ref{ThmMinusculeClassify}. This classification handles the finite and infinite poset cases separately and makes use of the classification of dominant minuscule heaps colored by multiply laced Dynkin diagrams of Stembridge \cite{Ste}, the classification of connected full heaps by Green and Z.S. McGregor-Dorsey \cite{Gre,McD}, and several results from \cite{dC-class} developed for the more general $\Gamma$-colored $d$-complete posets.
%This classification proceeds differently in the finite and infinite poset cases.  Proctor did not define the colored minuscule posets axiomatically, but rather produced them from the minuscule representations of semisimple Lie algebras that had already been classified.  Our classification of finite $\Gamma$-colored minuscule posets proceeds directly from our defining axioms.  It does, however, rely on several results from \cite{dC-class} developed for the more general $\Gamma$-colored $d$-complete posets.  On the other hand, our classification in the infinite case does take advantage of an axiomatic equivalence with full heaps, which were defined axiomatically and classified by Green and his doctoral student, Z.S. McGregor-Dorsey.  A brief history of this classification is given in Section \ref{SectionMainResults} prior to stating our main theorem.  After proving the axiomatic equivalence between connected infinite $\Gamma$-colored minuscule posets and connected full heaps, we import the classification of Green and McGregor-Dorsey.
Though our techniques are combinatorial, our motivations include representation theory.  The main result of \cite{Unify} stated that $\Gamma$-colored minuscule posets are necessary and sufficient to build $P$-minuscule representations of (derived) Kac--Moody algebras.  So the classification in Theorem \ref{ThmMinusculeClassify} also classifies all $P$-minuscule representations, which we state in Theorem \ref{ThmPMinusculeClassify}.  Except for a minor difference between our underlying vector space generated by the filter-ideal ``splits'' of $P$ and Green's underlying vector space generated by the proper ideals of $P$, the $P$-minuscule representations created from connected infinite $\Gamma$-colored minuscule posets are precisely the representations of affine Kac--Moody algebras produced by Green in \cite{Gre1,Gre}.  The $P$-minuscule representations created from connected finite $\Gamma$-colored minuscule posets are precisely the minuscule representations of semisimple Lie algebras.  Hence $\Gamma$-colored minuscule posets provide another framework to link Green's representations with minuscule representations.

%: Added the word ``classifying'' to the first sentence.  Changed the last sentence to reflect new organization of the end of the paper.

%: Added "and preliminary results from \cite{dC-class}" to the first sentence.  Added "including as posets of coroots in the finite case" to the last sentence.

We give definitions and preliminary results from \cite{dC-class} in Section \ref{SectionDefinitions} and dedicate Sections \ref{SectionMultiplyLaced}--\ref{SectionSimplyLaced} to classifying connected finite $\Gamma$-colored minuscule posets.  We apply the above referenced classification of Stembridge in Section \ref{SectionMultiplyLaced} to handle the multiply laced case, and we handle the simply laced case in Sections \ref{SectionSLTopTreeLemmas}--\ref{SectionSimplyLaced}.  These sections contain several results that run parallel to results obtained in the classification of uncolored $d$-complete posets of Proctor \cite{DDCT}, including obtaining the basic form of the ``top tree'' and a downward extension process to produce new $\Gamma$-colored $d$-complete posets from a given top tree.  We give the classifications of $\Gamma$-colored minuscule posets and $P$-minuscule representations in Section \ref{SectionMainResults}, and in Section \ref{SectionHistory} we realize $\Gamma$-colored minuscule posets as certain posets of coroots in the finite case.

%: Added "and preliminaries" to the section header.

\section{Definitions and preliminaries}\label{SectionDefinitions}

%: Added "linear extension" and "ranked poset and rank function" to Stanley list of terms.

Let $P$ be a nonempty partially ordered set.  We follow \cite{Sta} for the following commonly used terms: interval, covering relations and the Hasse diagram, order ideal and order filter, saturated chain, linear extension, ranked poset and rank function, convex subposet, order dual poset, disjoint union of posets, distributive lattice, and join irreducible element of a lattice.  We also follow the definitions and notation established in \cite{dC-class}.  We will use letters such as $z,y,x,\dots$ to denote elements of $P$.  Let $x,y \in P$.  If $x$ is covered by $y$, then we write $x \to y$.  We say $x$ and $y$ are \emph{neighbors} in $P$ if $x \to y$ or $y \to x$.  If $x \le y$, then we denote the open and closed intervals between $x$ and $y$ respectively by $(x,y)$ and $[x,y]$.  We often assume $P$ is finite and always require it to be \emph{locally finite}, meaning that all intervals in $P$ are finite.  If a poset cannot be written as a disjoint union of two of its nonempty subposets, then it is \emph{connected}.  The \emph{connected components} of $P$ are the connected subposets of $P$ whose disjoint union is $P$.

Let $\Gamma$ be a finite set.  We use letters such as $a,b,c,\dots$ to denote the elements of $\Gamma$ and call them \emph{colors}.  Fix integers $\theta_{ab}$ for $a,b \in \Gamma$ that satisfy the following requirements:
\begin{enumerate}[(i),nosep]
    \item For all $a \in \Gamma$, we have $\theta_{aa} = 2$.
    \item For all distinct $a,b \in \Gamma$, we have $\theta_{ab} \le 0$ and $\theta_{ba} \le 0$. 
    \item For all distinct $a,b \in \Gamma$, we have $\theta_{ab} = 0$ if and only if $\theta_{ba} = 0$.
\end{enumerate}
We say $a$ and $b$ are \emph{distant} when $\theta_{ab} = 0$ and \emph{adjacent} when $\theta_{ab} < 0$.  If $a$ and $b$ are adjacent, we write $a \sim b$ and say $a$ is \emph{$k$-adjacent to $b$} (respectively $b$ is \emph{$l$-adjacent to $a$}) when $\theta_{ab} = -k$ (respectively $\theta_{ba} = -l$).

This choice of integers may be realized equivalently with a finite graph with finitely many nodes and no loops.  The nodes are the elements of $\Gamma$.  Let $a,b \in \Gamma$ be distinct.  If $\theta_{ab}\theta_{ba} = 0$, then there is no edge between $a$ and $b$.  If $\theta_{ab}\theta_{ba} = 1$, then there is a single undirected edge between $a$ and $b$.  If $\theta_{ab} \theta_{ba} > 1$, then there is a directed edge from $a$ to $b$ (respectively from $b$ to $a$) decorated with the integer $-\theta_{ab}$ (respectively $-\theta_{ba}$).  The resulting graph (together, possibly, with some edge decorations) is the \emph{Dynkin diagram} corresponding to the above choice of integers, and will also be denoted $\Gamma$.
The choice of integers satisfying (i)--(iii) that produces a given Dynkin diagram $\Gamma$ is unique and can be easily recovered from $\Gamma$ itself; this is our standard practice.
%In this paper, we will instead start with a Dynkin diagram $\Gamma$ and recover the unique choice integers $\theta_{ab}$ satisfying (i)--(iii)
%If a Dynkin diagram $\Gamma$ is given but the integers $\theta_{ab}$ for $a,b \in \Gamma$ are not known \emph{a priori}, they can be easily and uniquely recovered from $\Gamma$.  This is our standard practice.
%Conversely, given a Dynkin diagram $\Gamma$ as described here, the choice of integers satisfying (i)--(iii) that is used to contruct $\Gamma$ in this way can be easily and uniquely recovered; this is our standard practice.
%Conversely, given such a Dynkin diagram $\Gamma$, the integers $\theta_{ab}$ for $a,b \in \Gamma$ can be recovered; we use this approach.
%We will always start with a Dynkin diagram $\Gamma$ and recover the integers $\theta_{ab}$ for $a,b \in \Gamma$ from it.  

We say a Dynkin diagram $\Gamma$ is \emph{acyclic} if the underlying simple graph, obtained by replacing each pair of directed edges by a single undirected edge, is acyclic.  If $\Gamma$ is a simple graph (equivalently, if $\theta_{ab} \in \{-1,0,2\}$ for all $a,b \in \Gamma$), then we say $\Gamma$ is \emph{simply laced}.  Otherwise $\Gamma$ is \emph{multiply laced}.

%: 11: Added sentence to clarify default isomorphism type.

The elements of $P$ are \emph{$\Gamma$-colored} by equipping $P$ with a surjective coloring function $\kappa : P \to \Gamma$ onto the nodes of $\Gamma$.  
%We abuse notation and refer to the triple $(P,\Gamma,\kappa)$ as $P$.
By abuse of notation we typically refer to the triple $(P,\Gamma,\kappa)$ as $P$.
If $P_1$ is $\Gamma_1$-colored by $\kappa_1$ and $P_2$ is $\Gamma_2$-colored by $\kappa_2$, then $P_1$ is \emph{isomorphic} to $P_2$ if there is a poset isomorphism $\pi : P_1 \to P_2$ and a graph isomorphism $\gamma : \Gamma_1 \to \Gamma_2$ such that $\kappa_2\pi = \gamma \kappa_1$.  We write $P_1 \cong P_2$.
Unless otherwise specified, isomorphisms in this paper are $\Gamma$-colored poset isomorphisms as defined here.
Whenever we say that a $\Gamma$-colored poset with certain properties is unique, it is understood to mean unique up to such an isomorphism.

We define properties that $P$ may satisfy with respect to a $\Gamma$-coloring:
\begin{itemize}[nosep]
    \item [] (EC) Elements with equal colors are comparable.
    \item [] (NA) Neighbors have adjacent colors.
    \item [] (AC) Elements with adjacent colors are comparable.
\end{itemize}
\noindent For each color $a$, we define $P_a := \{x \in P \ | \ \kappa(x) = a\}$.  We say that $x,y \in P_a$ are \emph{consecutive elements of the color $a$} if $x < y$ in $P$ and $(x,y)$ contains no elements of the color $a$.
\begin{itemize}[nosep]
    \item [] (ICE2) For every $a \in \Gamma$, if $x < y$ are consecutive elements of the color $a$, then $\sum_{z \in (x,y)} -\theta_{\kappa(z),a} = 2$.
\end{itemize}
\noindent For an element $x \in P$, we define the set $U(x,P) := \{y \in P \ | \ y > x \ \text{and} \ \kappa(y) \sim \kappa(x)\}$.  Dually, we define the set $L(x,P) := \{y \in P \ | \ y < x \ \text{and} \ \kappa(y) \sim \kappa(x)\}$.  Let $k \ge 1$.
\begin{itemize}[nosep]
    \item [] (UCB$k$) For every $a \in \Gamma$, if $x$ is maximal in $P_a$, then $U(x,P)$ is finite and $\sum_{y \in U(x,P)} -\theta_{\kappa(y),a} \le k$.
    \item [] (LCB$k$) For every $a \in \Gamma$, if $x$ is minimal in $P_a$, then $L(x,P)$ is finite and $\sum_{y \in L(x,P)}-\theta_{\kappa(y),a} \le k$.
\end{itemize}

\noindent The properties ICE2, UCB$k$, and LCB$k$ control the ``census'' of elements of colors that are adjacent to a given color in either an interval or an upper or lower ``frontier'' of the poset.  The latter two properties are thus the \emph{frontier census properties}.  These properties were introduced in \cite{Unify} in the simply laced case and in \cite{Str,dC-class} in the multiply laced case.  We follow the property naming conventions of \cite{dC-class}.

These properties are used to give the main colored poset definitions in this paper.

\begin{definition}
A \emph{$\Gamma$-colored $d$-complete} poset is a locally finite $\Gamma$-colored poset (of any cardinality) that satisfies EC, NA, AC, ICE2, and UCB1.  If it also satisfies LCB1, then it is a \emph{$\Gamma$-colored minuscule} poset. 
\end{definition}

%: 3: Added a new clarifying sentence about the dual of $\Gamma$-colored minuscule posets.

\noindent We remark that the order dual of a $\Gamma$-colored minuscule poset is also $\Gamma$-colored minuscule.

%: Modified paragraph to introduce full definition of dominant minuscule heaps, rather than just S3 and S4.

J.R. Stembridge introduced and classified dominant minuscule heaps in \cite{Ste}.  These finite $\Gamma$-colored posets correspond to $\lambda$-minuscule Weyl group elements for dominant integral weights $\lambda$.  We give Stembridge's definition, translated to our conventions for notation and terminology.

%: Gave full numbered definition of dominant minuscule heaps.

\begin{definition}\label{dominantminusculeheap}
A \emph{dominant minuscule heap} is a finite $\Gamma$-colored poset that satisfies
\begin{itemize}[nosep]
    \item [] (S1) All neighbors in $P$ have colors that are equal or adjacent in $\Gamma$, and the colors of incomparable elements are distant.
    \item [] (S2) For every $a \in \Gamma$, the open interval between any two consecutive elements of color $a$ either contains (i) exactly two elements whose colors are adjacent to $a$, and their colors are 1-adjacent to $a$, or (ii) exactly one element, and the color of this element is 2-adjacent to $a$.
    \item [] (S3) For every $a \in \Gamma$, an element that is maximal in $P_a$ is covered by at most one element, and this element is maximal among all elements of some color that is 1-adjacent to $a$.
    \item [] (S4) The Dynkin diagram $\Gamma$ is acyclic.
\end{itemize}
\end{definition}

% We will use Stembridge's third and fourth defining axioms for these posets, translated to our conventions for notation and terminology.

% \begin{itemize}[nosep]
%     \item [] (S3) For every $a \in \Gamma$, an element that is maximal in $P_a$ is covered by at most one element, and this element is maximal among all elements of some color that is 1-adjacent to $a$.
%     \item [] (S4) The Dynkin diagram $\Gamma$ is acyclic.
% \end{itemize} 

%: Deleted previous remark, but kept part (b) of that remark as an unnumbered paragraph.  Part (a) will be stated as a numbered theorem.

\noindent Stembridge did not initially require $\kappa$ to be surjective, and his version of S4 was that the colors appearing in $\kappa(P)$ index an acyclic subdiagram of $\Gamma$.  Outside of Section \ref{SectionHistory}, we only use his classification of dominant minuscule heaps from Section 4 of \cite{Ste}, which is completed under the assumption that $\kappa$ is surjective.

% J.R. Stembridge defined and classified dominant minuscule heaps in \cite{Ste}.  These finite $\Gamma$-colored posets correspond to $\lambda$-minuscule Weyl group elements for dominant integral weights $\lambda$.  We will use Stembridge's third and fourth defining axioms for these posets, translated to our conventions for notation and terminology.

% \begin{itemize}[nosep]
%     \item [] (S3) For every $a \in \Gamma$, an element that is maximal in $P_a$ is covered by at most one element, and this element is maximal among all elements of some color that is 1-adjacent to $a$.
%     \item [] (S4) The Dynkin diagram $\Gamma$ is acyclic.
% \end{itemize} 

% \begin{remark}
% \begin{enumerate}[(a),nosep]
% \item We showed in \cite[Thm. 9]{dC-class} that a finite $\Gamma$-colored poset is $\Gamma$-colored $d$-complete if and only if it is a dominant minuscule heap.  For the sake of convenience and brevity, we will thus occasionally use the axioms S3 and S4 when working with finite $\Gamma$-colored $d$-complete posets.
% \item Stembridge did not initially require $\kappa$ to be surjective, and his version of S4 was that the colors appearing in $\kappa(P)$ index an acyclic subdiagram of $\Gamma$.  We only use his classification of dominant minuscule heaps from Section 4 of \cite{Ste}, which is completed under the assumption that $\kappa$ is surjective.
% \end{enumerate}
% \end{remark}

Several of Stembridge's coloring axioms for dominant minuscule heaps were slightly modified by R.M. Green to define full heaps in \cite{Gre1,Gre2}.  The main difference is cardinality; while dominant minuscule heaps are finite, full heaps must satisfy
%: 11: Added "as an uncolored poset" to the definition of G3.
\begin{itemize}[nosep]
    \item [] (G3) For every $a \in \Gamma$, the set $P_a$ is isomorphic as an uncolored poset to $\mathbb{Z}$.
\end{itemize}
\noindent This is the third axiom from Green's definition of full heaps given in \cite{Gre}, translated to our conventions for notation and terminology.  Hence full heaps are infinite $\Gamma$-colored posets that are unbounded above and below.  In \cite[Cor. 7]{dC-class}, we showed that full heaps can be defined as follows.

\begin{definition}
A locally finite $\Gamma$-colored poset $P$ is a \emph{full heap} if it satisfies EC, NA, AC, ICE2, and G3.
\end{definition}

%: 2: Changed "with" to "satisfy"

%: Added a parenthetical sentence to justify invoking S3.

%: Update: Removed parenthetical sentence and converted to a remark separated by commas in the preceding sentence.

We typically only use the following definitions when $P$ is a finite $\Gamma$-colored $d$-complete poset.  We define $Ch(P)$ to be the set of elements $x \in P$ whose principal filter $\{y \in P \ | \ y \ge x\}$ is a chain.  
%This is one definition of ``top tree'' given by R.A. Proctor in the uncolored setting.  
We follow Stembridge and define the \emph{top tree} $T$ of $P$ to be the set of maximal elements of each color.  
%By EC and the surjectivity of $\kappa$, we see $T$ contains exactly one element of each color.  
The property S3, 
which holds for finite $\Gamma$-colored $d$-complete posets by Theorem \ref{Thm9Str3} stated below, implies that $T$ is a filter of $P$.  
%\tcr{(This property holds by Theorem \ref{Thm9Str3}, stated below.)}  
We usually assume $P$ is connected when considering top trees, in which case $T$ is a rooted tree.  
We note that EC and the surjectivity of $\kappa$ imply $\kappa|_T$ is a bijection.
%By EC and the surjectivity of $\kappa$, we see $\kappa_|T$ is a bijection.
A finite $\Gamma$-colored $d$-complete poset $P$ is \emph{slant irreducible} if it is connected and whenever $x,y \in T$ satisfy $x \to y$, the element $y$ is not the only element of its color in $P$.

%: Added preliminary results from dC-class.  Note that I added "$\qed$" to the end of each of these to indicate there is no proof.

We close this section by stating results appearing in \cite{dC-class} concerning $\Gamma$-colored $d$-complete posets.

\begin{theorem}[Theorem 9 of \cite{dC-class}]\label{Thm9Str3}
Let $P$ be a finite $\Gamma$-colored poset.  Then $P$ is a $\Gamma$-colored $d$-complete poset if and only if it is a dominant minuscule heap. $\qed$
\end{theorem}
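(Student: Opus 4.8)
The statement is an equivalence of two axiom systems on a finite $\Gamma$-colored poset, and the plan is to prove it by a direct translation, handled one axiom at a time. The implication ``dominant minuscule heap $\Rightarrow$ $\Gamma$-colored $d$-complete'' is the quick direction: S1 yields EC and AC at once, since its second clause says incomparable elements have distant colors --- colors that are neither equal nor adjacent --- which is exactly the contrapositive of ``EC and AC''; S1 and S2 together yield NA, because covering pairs are neighbors and so have equal or adjacent colors by S1, with the equal case excluded by S2 (a covering pair spans an empty open interval, but S2 makes the open interval between consecutive elements of one color nonempty); S2 yields ICE2, as two colors $1$-adjacent to $a$ contribute $1+1=2$ to the sum $\sum_{z\in(x,y)}-\theta_{\kappa(z),a}$ while one color $2$-adjacent to $a$ contributes $2$; S3 together with S1 and S2 yields UCB1; and S4 plays no role, since the $\Gamma$-colored $d$-complete definition imposes no acyclicity. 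So the substance lies in the reverse implication, which I would split into the four sub-claims ``$d$-complete $\Rightarrow$ S1'', ``$\Rightarrow$ S3'', ``$\Rightarrow$ S4'', and ``$\Rightarrow$ S2''.

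The first two sub-claims are fairly mechanical. For S1: NA gives its first clause (covering pairs have adjacent, hence equal or adjacent, colors), and EC and AC together give its second clause as above. For S3: if $x$ is maximal in $P_a$, then by NA any two distinct covers of $x$ lie in $U(x,P)$ with colors adjacent to $a$, each contributing at least $1$ to the UCB1 sum and together exceeding the bound $1$; hence $x$ has at most one cover $z$, and if $z$ exists, UCB1 forces $-\theta_{\kappa(z),a}=1$ (so $\kappa(z)$ is $1$-adjacent to $a$) and $U(x,P)=\{z\}$, the latter forcing $z$ to be maximal in $P_{\kappa(z)}$ because a larger element of that color would again lie in $U(x,P)$. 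This is exactly S3.

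For S4 I would show that acyclicity of $\Gamma$ is automatic. One may reduce to connected $P$ with $\Gamma$ connected: by AC, elements of adjacent colors are comparable and hence in one component, so (using that covering pairs have adjacent colors and $\kappa$ is surjective) the color set of each component of $P$ is a connected component of $\Gamma$, and each component of $\Gamma$ arises from a unique component of $P$ with that color set. Suppose a connected $\Gamma$ contained a cycle $c_1\sim c_2\sim\cdots\sim c_k\sim c_1$ with $k\ge 3$, and let $\widehat{c_i}$ be the top of the finite chain $P_{c_i}$ (a chain by EC). Since $c_i\sim c_{i+1}$, the elements $\widehat{c_i}$ and $\widehat{c_{i+1}}$ are comparable by AC; orient the cycle edge between $c_i$ and $c_{i+1}$ toward whichever of $\widehat{c_i},\widehat{c_{i+1}}$ is larger. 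If some $c_i$ were a source of this orientation, then $\widehat{c_{i-1}}$ and $\widehat{c_{i+1}}$ would be two distinct elements above $\widehat{c_i}$ with colors adjacent to $c_i$, violating UCB1 at $\widehat{c_i}$. Hence every vertex of the cycle has in-degree at least $1$, and since the cycle has as many edges as vertices, every vertex has in-degree exactly $1$; this forces a consistently oriented directed cycle, giving $\widehat{c_1}<\widehat{c_2}<\cdots<\widehat{c_k}<\widehat{c_1}$, a contradiction.

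The last sub-claim, ``$d$-complete $\Rightarrow$ S2'', is where I expect the main obstacle, via an \emph{interval lemma}: if $x<y$ are consecutive elements of a color $a$, then the open interval $(x,y)$ has at most two elements, all with colors adjacent to $a$. Granting this, S2 follows at once from ICE2, since the summands $-\theta_{\kappa(z),a}$ for $z\in(x,y)$ are then all at least $1$ and total $2$, forcing either two elements each $1$-adjacent to $a$ or a single element $2$-adjacent to $a$. To prove the interval lemma I would first use NA and ICE2 to show that $x$ has at most one cover inside $(x,y)$ and that $y$ covers at most one element inside $(x,y)$: a second such cover would produce two distinct elements of $(x,y)$ with colors adjacent to $a$, exhausting the ICE2 total of $2$ and thereby forbidding any further elements of $(x,y)$. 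So $(x,y)$ is wedged between a unique minimal element $x_1$ and a unique maximal element $y_1$, both with colors adjacent to $a$. The delicate remaining point is to rule out elements of colors \emph{distant} from $a$ strictly between $x_1$ and $y_1$ --- a configuration that ICE2 by itself permits --- and here I would descend to a smaller ``bad'' interval between two consecutive elements of a color occurring strictly inside $(x_1,y_1)$, using finiteness of $P$ and the frontier bound UCB1 to terminate the descent. Carrying out this last step cleanly is the part I would develop most carefully.
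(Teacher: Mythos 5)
First, note that the paper does not prove this statement here: it is imported verbatim (with a $\qed$) as Theorem 9 of the companion paper \cite{dC-class}, so there is no in-paper argument to compare against; your proposal has to stand on its own. It does not quite do so, for two reasons. The decisive one is that the heart of the equivalence --- deriving S2 from the $d$-complete axioms --- is exactly the step you leave unexecuted. Your reduction of S2 to the ``interval lemma'' is the right move, and your observation that the minimal (resp.\ maximal) elements of $(x,y)$ are covers of $x$ (resp.\ covered by $y$) and hence, by NA, carry colors adjacent to $a$, is fine. But as you yourself note, EC, NA, AC and ICE2 alone do \emph{not} exclude a configuration $x < u < w < v < y$ with $\kappa(u),\kappa(v)$ each $1$-adjacent to $a$ and $\kappa(w)$ distant from $a$ (ICE2 is satisfied: $1+0+1=2$). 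Ruling this out genuinely requires UCB1, and the ``descend to a smaller bad interval'' plan is only a one-sentence sketch with no indication of what the descent invariant is, why a minimal bad interval cannot exist, or where UCB1 enters. Since this is the one place where the two axiom systems could actually diverge, the proof is incomplete precisely where it matters.

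Second, your claim that ``S4 plays no role'' in the direction dominant minuscule heap $\Rightarrow$ $\Gamma$-colored $d$-complete is false, and it hides a gap in your derivation of UCB1. Take $\Gamma$ a triangle $a \sim b \sim c \sim a$ with all adjacencies $1$-adjacencies, and $P$ the three-element chain $x < z < w$ colored $a,b,c$: S1, S2 (vacuous) and S3 all hold, yet $U(x,P)=\{z,w\}$ gives $\sum_{y \in U(x,P)} -\theta_{\kappa(y),a} = 2$, so UCB1 fails. Thus S1--S3 alone do not imply UCB1. The correct argument iterates S3 upward from an element $x$ maximal in $P_a$: the principal filter of $x$ is a chain whose elements are successively maximal in their (hence pairwise distinct) color classes, so the colors along it trace a self-avoiding path in $\Gamma$ starting at $a$ with consecutive colors adjacent; if some color at distance $\ge 2$ along this path were adjacent to $a$, one would obtain a cycle in $\Gamma$, contradicting S4. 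Your remaining items --- EC, NA, AC, ICE2 from S1/S2, and the derivations of S1, S3 and S4 from the $d$-complete axioms (the cycle-orientation argument for S4 is correct and rather elegant) --- are sound, but the two gaps above, especially the first, mean the proposal does not yet prove the theorem.
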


\noindent Because of Theorem \ref{Thm9Str3}, we freely use the defining axioms for dominant minuscule heaps when working with finite $\Gamma$-colored $d$-complete posets.

\begin{theorem}[Proposition 13 of \cite{dC-class}]\label{ThmProp13Str3}
Suppose $P$ is a $\Gamma$-colored $d$-complete poset and for every $b \in \Gamma$, the set $P_b$ is bounded above.  Then $P$ is finite. $\qed$
\end{theorem}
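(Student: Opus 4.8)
The plan is to argue by contradiction: assuming $P$ is infinite, I will peel off its top tree repeatedly to reach a connected $\Gamma$-colored $d$-complete poset \emph{all} of whose color sets are infinite, and then contradict this by examining the ICE2-interval just below a leaf of its top tree. Since every $P_b$ is bounded above, each (totally ordered, by EC) set $P_b$ has a maximum $m_b$, so the top tree $T=\{m_b:b\in\Gamma\}$ is defined with $\kappa|_T$ a bijection. It is worth noting that boundedness \emph{above} is the natural hypothesis precisely because UCB1 — an upper-frontier condition — is the only frontier census property in force; the analogue with ``bounded below'' is false (principal filters of full heaps are $\Gamma$-colored $d$-complete, infinite, and bounded below).

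\textbf{Step 1: a peeling lemma.} I would first observe that S3 is automatic for any $\Gamma$-colored $d$-complete poset with all color sets bounded above: every element covering $m_a$ lies in $U(m_a,P)$, so UCB1 forces at most one such element $u$, with $\kappa(u)$ $1$-adjacent to $a$ and $u$ maximal in its color. Hence $T$ is an order filter, $Q:=P\setminus T$ is an order ideal, and $[x,y]_P\subseteq Q$ for $x\le y$ in $Q$; so intervals of $Q$ agree with those of $P$, and $Q$ inherits EC, NA, AC and ICE2 for the induced sub-diagram on $\Gamma':=\{b:|P_b|\ge 2\}$. The substantive point is UCB1 for $Q$: if $x$ is maximal in $Q_a$ it is the second-largest element $x_{a,1}$ of $P_a$, and one checks $U(x_{a,1},P)=G\sqcup U(m_a,P)$ with $G:=(x_{a,1},m_a)_P$ the ICE2-interval below $m_a$; since $U(m_a,P)$ is empty or the singleton coming from S3 (hence contained in $T$), we get $U(x_{a,1},Q)=G\setminus T$. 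Using the S2 description of $G$ (two elements of distinct colors $1$-adjacent to $a$, or one element $2$-adjacent to $a$) together with S3, UCB1, NA, AC, a short case analysis shows that at least one element of $G$ — in the one-element case, that element — must lie in $T$; otherwise one produces either a second element covering $m_a$ or two distinct positive-weight elements of $U(m_a,P)$, both contradicting UCB1 at $m_a$. Hence $\sum_{z\in G\setminus T}-\theta_{\kappa(z),a}\le 1$, and $Q$ is $\Gamma'$-colored $d$-complete, again with all color sets bounded above.

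\textbf{Step 2: iterate and reduce.} Put $P^{(0)}=P$ and $P^{(k+1)}=P^{(k)}\setminus(\text{top tree of }P^{(k)})$; by Step 1 each $P^{(k)}$ is $\Gamma^{(k)}$-colored $d$-complete with bounded-above color sets, and $\Gamma^{(k)}:=\kappa(P^{(k)})$ is decreasing. As $\Gamma$ is finite it stabilizes at some $\Gamma^\infty$, which is nonempty (otherwise $P$ is a finite union of top trees). For large $k$ no color is lost under peeling, which forces $|P^{(k)}_b|=\infty$ for every $b\in\Gamma^\infty$; passing to a connected component $C$ of such a $P^{(k)}$, and using that all elements of a single-color chain lie in one component, $C$ is a connected $\kappa(C)$-colored $d$-complete poset with every color set infinite and bounded above.

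\textbf{Step 3 (the main obstacle): no such $C$ exists.} If $|\kappa(C)|=1$, two consecutive elements of the unique color have empty open interval between them, contradicting ICE2, so $|\kappa(C)|\ge 2$. By S3 each element of the top tree $T_C$ is covered by at most one element of $C$, so the connected tree $T_C$ has a unique maximal element $M$, and hence a minimal element $m_a\ne M$. Since $C_a$ is infinite, $m_a$ is not minimal in $C$; examine the ICE2-interval $G=(x_{a,1},m_a)$ immediately below $m_a$. Analyzing $G$ by its S2-form together with S3, UCB1, NA, AC, every case forces some element of $G$ that is covered by $m_a$ to be the maximum of its color; such an element then lies in $T_C$ strictly below $m_a$, contradicting the minimality of $m_a$ in $T_C$. (The sub-cases in which the covered element is not maximal in its color are eliminated because they yield a second cover of $m_a$ or two distinct positive-weight elements of $U(m_a,C)$, again violating UCB1.) This contradiction forces $P$ to be finite. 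The hard part is this Step 3, and the essentially identical UCB1 computation inside Step 1: in both one must extract, from the small ICE2-interval just below a ``top-of-color'' element, a further top-of-color element, and then play it against the rigidity of the top tree imposed by S3; the remaining ingredients — $T$ being a filter, $Q$ an ideal with the same intervals, the stabilization of the color sets, and the passage to a component — are routine.
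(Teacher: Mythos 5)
The paper does not actually prove this statement — it is imported verbatim from \cite{dC-class} (Proposition 13 there) with a $\qed$ — so I can only assess your argument on its own terms. Your strategy (peel top trees, then extract a top-of-color element from the ICE2-interval just below a color-maximal element) is reasonable, and Steps 1 and 2 correctly reduce everything to the key claim that $\sum_{z \in G \setminus T} -\theta_{\kappa(z),a} \le 1$ for $G = (x_{a,1},m_a)$. But your case analysis of $G$ rests on a false premise: you describe the S2(i) alternative as ``two elements of \emph{distinct} colors $1$-adjacent to $a$.'' Neither ICE2 nor S2 forces distinctness, and equal colors genuinely occur: in the type $D$ standard poset of Figure \ref{FigMinSL}(b) with $i=2$ (colors $a \sim e$, $e \sim f$, $e \sim g$; elements $x_{a,1} < z_1 < w_f, w_g < z_2 < m_a$ with $w_f, w_g$ incomparable and $\kappa(z_1)=\kappa(z_2)=e$), the interval $(x_{a,1},m_a)$ contains two adjacent-colored elements of the \emph{same} color $e$.

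In that equal-color case your dichotomy collapses. Writing $z_1 < z_2$ for the two color-$c$ elements of $G$, the witness to $z_1 \notin T$ is $z_2$ itself, which lies inside $G$ and contributes nothing to $U(m_a,P)$; only $z_2 \notin T$ would produce an element $w > m_a$ of color $c$, and that single element has weight $-\theta_{ca}=1$, perfectly consistent with UCB1 at $m_a$. So neither ``a second element covering $m_a$'' nor ``two distinct positive-weight elements of $U(m_a,P)$'' materializes, and you cannot conclude the bound: if $z_2 \notin T$ occurred, then $U(x_{a,1},Q)$ would contain both $z_1$ and $z_2$ with total weight $2$, so UCB1 would fail for $Q$ and Step 1 would break; the identical configuration is the unhandled case in Step 3. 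I believe the offending configuration (two color-$c$ elements of $G$ together with a third color-$c$ element above $m_a$) is in fact impossible, but ruling it out is a further, nontrivial argument — one must climb the chain $P_c$ above $z_2$ and repeatedly play ICE2 on consecutive color-$c$ intervals against UCB1 at the maxima of the auxiliary colors that ICE2 forces to appear — and nothing of the sort is in your write-up. Until the equal-color case is closed, the central claim of Steps 1 and 3, and hence the whole proof, is unestablished.
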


\begin{theorem}[Theorem 22 of \cite{dC-class}]\label{Thm22Str3}
Let $P$ be a connected infinite $\Gamma$-colored poset.  Then $P$ is a $\Gamma$-colored $d$-complete poset if and only if it is a filter of some connected full heap. $\qed$
\end{theorem}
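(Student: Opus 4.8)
\emph{Strategy.} The plan is to prove the two implications separately. That every filter of a connected full heap is $\Gamma$-colored $d$-complete is a routine inheritance check; the real content is the converse, where a full heap must be manufactured from $P$. For the easy direction, suppose $P$ is a filter of a connected full heap $H$ (we may pass to the sub-Dynkin-diagram $\kappa(P)\subseteq\Gamma$, which inherits the integers $\theta_{ab}$, so that $\kappa|_P$ is surjective). Since $P$ is an order filter of $H$, for $x,y\in P$ the comparability of $x$ and $y$, the covering relations between them, and the open interval $(x,y)$ are computed identically in $P$ and in $H$; hence EC, NA, AC and ICE2 pass from $H$ to $P$. And UCB1 holds vacuously: for any color $a$ occurring in $P$, the set $P_a=P\cap H_a$ is a nonempty order filter of $H_a\cong\mathbb{Z}$ (by G3), so it has no maximal element and the hypothesis of UCB1 is never triggered. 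Thus $P$ is $\Gamma$-colored $d$-complete.

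\emph{Reductions for the converse.} Now let $P$ be connected, infinite and $\Gamma$-colored $d$-complete. First, connectedness of $P$ together with NA forces $\Gamma=\kappa(P)$ to be connected, since the colors met along a path of covering relations stay inside one component of $\Gamma$ and $P$ is Hasse-connected. Next, since $P$ is infinite, Theorem \ref{ThmProp13Str3} supplies a color $b$ with $P_b$ unbounded above, and this property propagates across adjacencies: if $a\sim b'$ with $P_{b'}$ unbounded above while $P_a$ has a maximal element $x$, then by AC the element $x$ is comparable to every element of the chain $P_{b'}$, so (as $x$ cannot bound $P_{b'}$) there is $y_0\in P_{b'}$ with $x<y_0$, whence infinitely many elements of $P_{b'}$ lie in $U(x,P)$, contradicting UCB1. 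Since $\Gamma$ is connected, every $P_a$ is therefore unbounded above; being a locally finite chain (by EC) with no top, each $P_a$ is order-isomorphic to $\mathbb{Z}$ or to $\mathbb{Z}_{\ge 0}$. If every $P_a\cong\mathbb{Z}$ then $P$ satisfies G3, so $P$ is itself a connected full heap by \cite[Cor.\ 7]{dC-class} and is a filter of itself; so we may assume $P_a\cong\mathbb{Z}_{\ge 0}$ for at least one color.

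\emph{Construction of the enveloping full heap.} The idea is to extend $P$ downward, repeatedly and periodically, until every color class becomes bi-infinite. Concretely, one builds a chain $P=Q^{(0)}\subseteq Q^{(1)}\subseteq Q^{(2)}\subseteq\cdots$ of connected $\Gamma$-colored $d$-complete posets, each an order filter of the next, by repeatedly adjoining new elements strictly below the current minimal elements, and then sets $H:=\bigcup_n Q^{(n)}$. Each instance of EC, NA, AC or ICE2 involves only finitely many elements and so is witnessed in some $Q^{(n)}$, whence these four axioms pass to $H$; adjoining elements only below never creates a color-maximal element, so UCB1 stays vacuous; and if the extensions are arranged so that $\bigcup_n Q^{(n)}_a=\mathbb{Z}$ for every $a$, then $H$ satisfies G3 and is a full heap by \cite[Cor.\ 7]{dC-class}. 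It is connected because $P$ is and every newly adjoined element lies below an element of $P$, and $P$ is a filter of $H$ because it is a filter of every $Q^{(n)}$ and a nested union of such filters is a filter of the union. Equivalently, once a single ``fundamental domain'' has been assembled one may realize $H$ as the telescope $(P\times\mathbb{Z})/{\sim}$ of a color-preserving order-embedding $\sigma\colon P\hookrightarrow P$ whose image is a proper filter of $P$ with $\bigcap_k\sigma^k(P)=\emptyset$, recovering $P$ as the image of $P\times\{0\}$.

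\emph{The main obstacle.} The crux is justifying the downward extension and its termination: that a connected $\Gamma$-colored $d$-complete poset which is unbounded above in every color but bounded below in some admits such an extension, and that iterating it eventually sweeps out all of $\mathbb{Z}$ in every color (equivalently, that the self-embedding $\sigma$ exists). The colors and covering relations of the elements that must be placed just below a minimal element $m$ of $P$ are tightly constrained — ICE2 together with the dual census requirements essentially dictate which colors appear immediately below $m$ and with what multiplicities — so the extension, when it exists, is forced; the delicate points are (i) that each minimal element of $P$ has a finite down-set, so the local data is well-posed at every stage, (ii) that EC, NA, AC and ICE2 survive each step, and (iii) that the process is eventually periodic. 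Step (iii) is a pigeonhole phenomenon resting on the finiteness of $\Gamma$ and of the possible local configurations around an element of a fixed color in a $d$-complete poset: the ``profile'' of $P$ sufficiently far below stabilizes, so the repeated downward extension — equivalently, the embedding $\sigma$ — exists. This is the infinite analogue of the structural analysis of top trees and their downward extensions that is carried out for the finite case in Sections \ref{SectionSLTopTreeLemmas}--\ref{SectionSimplyLaced}.
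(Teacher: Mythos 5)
First, a point of comparison: the present paper does not prove Theorem \ref{Thm22Str3} at all --- it is imported verbatim from \cite{dC-class} and stated without proof --- so there is no in-paper argument to measure yours against. Your outline is, in spirit, the downward-extension strategy that \cite[\S 5]{dC-class} actually carries out (the present paper alludes to this in Section \ref{SectionExtending}). Your easy direction and your reductions for the converse are sound: order filters inherit EC, NA, AC and ICE2 because comparabilities, covers and open intervals are computed identically in a filter; UCB1 is vacuous because each $P_a$ is a nonempty filter of $H_a\cong\mathbb{Z}$ and so has no maximal element; and the propagation of ``unbounded above in some color'' across adjacencies, via AC together with the finiteness clause of UCB1, is exactly right.

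The genuine gap is the construction of the enveloping full heap, which is the entire content of the theorem. You assert that the elements to be adjoined below a minimal element are ``dictated by ICE2 together with the dual census requirements,'' but a $\Gamma$-colored $d$-complete poset satisfies no lower census axiom --- LCB1 is precisely the extra \emph{minuscule} hypothesis --- so there is no dual requirement to invoke. The correct criterion is the analogue of Lemma \ref{LemExtensionCondition}: one may adjoin a new minimal element of color $a$ exactly when the lower frontier census $L_a$ equals $2$, and that lemma is proved in this paper only for finite posets. More seriously, you never establish (i) that at each stage some color has census exactly $2$ rather than all censuses being $\le 1$ with some color class still bounded below, (ii) that no census ever exceeds $2$ as the process runs (for finite posets this failure mode genuinely occurs, cf. Proposition \ref{PropNewSLFail}, and it must be ruled out here), nor (iii) that every color is extended infinitely often, so that $\bigcup_n Q^{(n)}_a\cong\mathbb{Z}$ for every $a$. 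Your appeal to ``pigeonhole on finitely many local configurations'' for eventual periodicity is a heuristic, not an argument: finiteness of $\Gamma$ does not by itself bound the possible lower-frontier profiles, and the existence of the proper self-embedding $\sigma$ with $\bigcap_k\sigma^k(P)=\emptyset$ is essentially equivalent to the theorem you are trying to prove. As written, the proposal correctly reduces the problem to the downward extension but then asserts, rather than proves, that the extension always succeeds and exhausts $\mathbb{Z}$ in every color.
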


%\newpage 

\section{Connected finite \texorpdfstring{$\Gamma$}{Gamma}-colored minuscule posets when \texorpdfstring{$\Gamma$}{Gamma} is multiply laced}\label{SectionMultiplyLaced}

We begin the classification of connected finite $\Gamma$-colored minuscule posets in this section.  This classification will be broken into cases by Proposition \ref{PropChainorSlantIrreducible}.  We handle the multiply laced case in Theorem \ref{ThmMultLacedClassify} by applying Stembridge's classification of dominant minuscule heaps colored by multiply laced Dynkin diagrams.

\begin{remark}\label{PconniffGamma}
Corollary 24 of \cite{dC-class} shows that if $P$ satisfies EC, NA, and AC, then $P$ is connected if and only if $\Gamma$ is connected.  When $P$ satisfies these properties and we need either $P$ or $\Gamma$ to be connected, we will assume $P$ is connected and use this result without comment.
\end{remark}

We begin with two straightforward lemmas needed for Proposition \ref{PropChainorSlantIrreducible}.  %Note that $\kappa|_T$ is injective by EC and surjective since $\kappa$ is surjective.

\begin{lemma}\label{LemTconnected}
Let $P$ be a connected finite $\Gamma$-colored $d$-complete poset.  Then the top tree $T$ is connected.
\end{lemma}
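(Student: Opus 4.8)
The plan is to first pin down the structure of $T$ and then transport the connectedness of $\Gamma$ to $T$. Since $P$ is finite and $\Gamma$-colored $d$-complete with $\kappa$ surjective, for each color $a$ the set $P_a$ is nonempty, and by EC it is a chain; being finite, it has a unique maximal element $x_a$. Thus $T = \{x_a : a \in \Gamma\}$ has exactly one element per color and $\kappa|_T$ is a bijection. Recall that, by definition, $T$ fails to be connected precisely when it splits as a disjoint union of two nonempty subposets with no comparabilities across the split; so it suffices to show the comparability graph on $T$ is connected, i.e., that any two elements of $T$ are joined by a sequence of elements of $T$ with consecutive terms comparable.

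Next I would invoke Remark \ref{PconniffGamma}: since $P$ is connected and satisfies EC, NA, and AC, the Dynkin diagram $\Gamma$ is connected. Hence, given colors $a$ and $b$, there is a walk $a = c_0 \sim c_1 \sim \cdots \sim c_n = b$ in $\Gamma$.

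Finally, for each $i$ the colors $c_i$ and $c_{i+1}$ are adjacent, so property AC forces $x_{c_i}$ and $x_{c_{i+1}}$ to be comparable in $P$, hence comparable as elements of the induced subposet $T$. Therefore $x_{c_0}, x_{c_1}, \dots, x_{c_n}$ all lie in a single connected component of $T$, so $x_a$ and $x_b$ lie in the same component. As $a$ and $b$ were arbitrary and every element of $T$ equals some $x_a$, the poset $T$ is connected.

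There is no serious obstacle here: the argument is essentially a one-step lift of the connectedness of $\Gamma$ through axiom AC. The only points needing a moment's care are verifying that each $P_a$ has a (unique) maximum, so that $T$ is the ``one element per color'' set, and observing that comparability in $P$ descends to comparability in the induced subposet $T$ and that a chain of pairwise-comparable elements lies in one component of a finite poset — both routine. (If one later wants the sharper statement that $T$ is a rooted tree, one would additionally use S3, valid for finite $\Gamma$-colored $d$-complete posets by Theorem \ref{Thm9Str3}; mere connectedness does not require it.)
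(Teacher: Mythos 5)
Your proof is correct and uses exactly the same ingredients as the paper's: the bijectivity of $\kappa|_T$, the connectedness of $\Gamma$ (via Corollary 24 of \cite{dC-class}), and AC to turn adjacency of colors into comparability of the corresponding top-tree elements. The only difference is presentational — you argue directly by lifting a walk in $\Gamma$ to a chain of comparabilities in $T$, while the paper runs the contrapositive (a splitting of $T$ would force two adjacent colors on opposite sides, contradicting AC) — so this is essentially the paper's proof.
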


\begin{proof}
Suppose for a contradiction that $T$ is the disjoint union of subposets $T_1$ and $T_2$.  Then $\kappa(T_1)$ and $\kappa(T_2)$ are nonempty, and they partition $\Gamma$ since $\kappa|_T$ is a bijection.  Since $\Gamma$ is connected, there are colors $a \in \kappa(T_1)$ and $b \in \kappa(T_2)$ with $a \sim b$.  Let $x \in T_1$ and $y \in T_2$ be the respective elements in $T$ of colors $a$ and $b$.  Then $x$ and $y$ are comparable by AC; without loss of generality, assume $x < y$ in $P$.  Then $x < y$ in $T$ as well.  This contradiction shows $T$ is connected.
\end{proof} 

Our second lemma has appeared in other forms; for example, see Proposition F2 of \cite{DDCT}.

\begin{lemma}\label{LemUniqueMaxElt}
Let $P$ be a connected finite $\Gamma$-colored $d$-complete poset.  Then $P$ has a unique maximal element.  If $P$ is $\Gamma$-colored minuscule, then it also has a unique minimal element.
\end{lemma}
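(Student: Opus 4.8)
The plan is to first establish the existence of a unique maximal element using the top tree $T$ together with Lemma \ref{LemTconnected}, and then deduce the dual statement for minimal elements by invoking the self-duality of $\Gamma$-colored minuscule posets. For the maximal element: recall that the top tree $T$ is the set of maximal elements of each color, that $T$ is a filter of $P$ by S3 (via Theorem \ref{Thm9Str3}), and that $T$ is connected by Lemma \ref{LemTconnected}. Since every element of $P$ lies below some maximal element of $P$ (as $P$ is finite), and every maximal element of $P$ lies in $T$ (a maximal element of $P$ is in particular maximal among elements of its color), it suffices to show $T$ has a unique maximal element, equivalently that $T$ — being a finite connected poset — has a unique maximal element. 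So I would show: if $x, y \in T$ are both maximal in $T$ and $x \neq y$, derive a contradiction.

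The key step is to use S3 to control the covering structure inside $T$. Because $T$ is connected, there is a saturated chain in $T$ between $x$ and $y$; pick two distinct elements $u, v \in T$ with $u \to v$ in $P$ (hence in $T$) lying on such a chain, chosen so that $v$ is maximal in $T$ — more carefully, I would take a maximal element $v$ of $T$ and an element $u \in T$ with $u \to v$, which exists unless $v$ is isolated in $T$, and isolation would force $\kappa|_T$ to land on a single color contradicting connectedness of $\Gamma$ (when $|\Gamma| > 1$; the case $|\Gamma| = 1$ is handled directly since then $P$ is a chain by EC). Now $v$ is maximal in $P_{\kappa(v)}$ since $v \in T$, so S3 says $v$ is covered by at most one element of $P$ and that element (if it exists) is maximal among all elements of a color $1$-adjacent to $\kappa(v)$. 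This pins down, for each element of $T$, the unique element of $T$ it is covered by — giving $T$ the structure of a rooted tree with a single root, which is the desired unique maximal element. I'd need to phrase this so that S3 gives: every non-root element of $T$ is covered by exactly one element of $T$, so $T$ cannot have two maximal elements.

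For the second assertion, since $P$ is $\Gamma$-colored minuscule, the remark immediately following the definition of $\Gamma$-colored minuscule posets states that the order dual $P^*$ is again $\Gamma$-colored minuscule, and it is connected iff $P$ is. Applying the first part of the lemma to $P^*$ gives a unique maximal element of $P^*$, which is a unique minimal element of $P$. The main obstacle I anticipate is making the S3 argument for the tree structure of $T$ fully rigorous — in particular, handling the possibility that $u \to v$ holds in $P$ but that there might be additional elements of $T$ between elements of $T$ along Hasse paths, and correctly reducing "two maximal elements of $T$" to a contradiction via the "covered by at most one element" clause of S3. (This is exactly the content of the cited Proposition F2 of \cite{DDCT} in the uncolored case, so the shape of the argument is standard; the work is in transcribing it to this colored axiomatic setting.)
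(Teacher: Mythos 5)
Your proposal is correct and follows essentially the same route as the paper: reduce to the top tree $T$, use S3 (every element of $T$ is maximal in its color class, hence covered by at most one element) together with the connectedness of $T$ from Lemma \ref{LemTconnected} to force a unique maximal element, then dualize for the minuscule case. One small caution: between two distinct maximal elements there is no \emph{saturated chain}, only a path in the Hasse diagram of $T$ (which must descend and then ascend, producing an element covered by two elements, contradicting S3) --- your eventual ``at most one cover per element of a connected finite poset'' formulation is the right way to finish, and the side discussion of isolated vertices and $|\Gamma|=1$ is unnecessary.
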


\begin{proof}
Suppose for a contradiction that $x$ and $y$ are distinct maximal elements of $P$.  Then $x$ and $y$ are in the top tree $T$.  Since $T$ is connected by Lemma \ref{LemTconnected}, choose a path from $x$ to $y$ in the Hasse diagram of $T$.  This path must start by moving down from $x$ and end by moving up to $y$.  Thus there must be an element $z \in T$ along this path that is covered by two elements.  This violates S3, so $P$ has a unique maximal element.

%: 3: Added more detail to the second sentence to explain why $P^*$ is $\Gamma$-colored $d$-complete.

%: Update: Trimmed the detail slightly to shorten and add better clarity.

Now suppose $P$ is $\Gamma$-colored minuscule.  The order dual poset $P^*$ is also $\Gamma$-colored minuscule and hence $\Gamma$-colored $d$-complete.  Thus it has a unique maximal element, which is the unique minimal element of $P$.
\end{proof}

Our next result divides the classification of connected finite $\Gamma$-colored minuscule posets into cases.

\begin{proposition}\label{PropChainorSlantIrreducible}
Let $P$ be a connected finite $\Gamma$-colored minuscule poset.
Then either $P$ is a chain and $\Gamma$ is simply laced, or $P$ is slant irreducible as a $\Gamma$-colored $d$-complete poset.
%At least one of the following holds.
%\begin{enumerate}[(i),nosep]
%    \item The poset $P$ is a chain and $\Gamma$ is simply laced, or
%    \item The poset $P$ is slant irreducible as a $\Gamma$-colored $d$-complete poset.
%\end{enumerate}
\end{proposition}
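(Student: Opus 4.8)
The plan is to assume $P$ is not slant irreducible and deduce that $P$ must be a chain with $\Gamma$ simply laced. By definition, since $P$ is connected, failing to be slant irreducible means there exist $x, y \in T$ with $x \to y$ such that $y$ is the only element of its color $b := \kappa(y)$ in $P$; that is, $P_b = \{y\}$. Since $T$ is a filter (by S3, via Theorem \ref{Thm9Str3}) and $\kappa|_T$ is a bijection, $y$ being the unique element of color $b$ forces $y$ to be maximal in $P$, so by Lemma \ref{LemUniqueMaxElt} it is \emph{the} unique maximal element of $P$. The first key step is therefore to pin down the local structure at $y$: $y$ covers $x$, and because $P$ is $\Gamma$-colored $d$-complete it satisfies LCB1 as well (it is $\Gamma$-colored minuscule), so $y$ being minimal in $P_b = \{y\}$ gives $\sum_{z \in L(y,P)} -\theta_{\kappa(z),b} \le 1$. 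Since $x \in L(y,P)$ and $\kappa(x) \sim b$ with $-\theta_{\kappa(x),b} \ge 1$, this sum equals exactly $1$, forcing $L(y,P) = \{x\}$ and $\theta_{\kappa(x),b} = -1$ — so $\kappa(x)$ is $1$-adjacent to $b$, and $x$ is the unique element below $y$ of any color adjacent to $b$.

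The next step is to propagate this downward. I would argue that $x$ is in fact the unique maximal element of $P \setminus \{y\}$, hence $x$ plays the same role in the order-filter $P' := P \setminus \{y\}$ (which is again $\Gamma'$-colored $d$-complete for the restricted coloring, as $T$ is a filter so removing the top element $y$ leaves a valid finite $\Gamma$-colored $d$-complete poset over $\Gamma' = \Gamma \setminus \{b\}$; here one should check $\kappa|_{P'}$ is still surjective onto $\Gamma'$ and that $\Gamma'$ is still connected). To see $x$ is the unique maximal element of $P'$: any maximal element of $P'$ other than $y$ would be maximal in $P$ if it isn't below $y$, contradicting uniqueness of the maximum of $P$; and the only element below $y$ that could be maximal in $P'$ is a coatom $x'$ of $y$, but $y$ covers only $x$ (this needs the structure of $T$ as a rooted tree together with S3 — $y \in T$ maximal in $P_b$ is covered by at most one element, and more importantly one shows $y$ has a unique coatom using AC and the fact that $L(y,P) = \{x\}$: any coatom $x'$ of $y$ satisfies $\kappa(x') \sim b$ by NA, so $x' \in L(y,P)$, forcing $x' = x$). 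Now I want to set up an induction on $|P|$: either $P' = \{x\}$ is a single point (so $P = \{x \to y\}$ is a $2$-chain, simply laced since $\theta_{\kappa(x),b} = -1 = \theta_{b,\kappa(x)}$ by requirement (iii) on the $\theta$'s... wait, requirement (iii) only gives the "zero iff zero" statement, so I need $\theta_{b,\kappa(x)} = -1$ separately — this comes from ICE2 or from examining $L$ and $U$ sets more carefully, or simply: a $2$-chain with a multiply-laced edge violates one of the census axioms, which I'd verify directly), or $P'$ is again connected with a unique maximal element and I want to show $P'$ is not slant irreducible either and apply the inductive hypothesis to conclude $P'$ is a chain with $\Gamma'$ simply laced.

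The hard part will be the inductive step: showing that $P'$ inherits the "not slant irreducible, hence chain" conclusion rather than splitting into the slant-irreducible case. The cleanest route, I think, is \emph{not} to induct on the proposition itself but to prove directly that if $P$ (connected, finite, $\Gamma$-colored minuscule) has the property that some $y \in T$ with a unique coatom is the unique element of its color, then $P$ is a chain. One shows by descending induction that \emph{every} element of $P$ is comparable to every other: having established that $y$ sits atop a unique coatom $x$ with $P_b = \{y\}$ and $\theta_{\kappa(x),b} = -1$, the element $x$ is maximal in $P_{\kappa(x)}$ (since $y$ is the only larger element of an adjacent color, and by EC elements of color $\kappa(x)$ are comparable so $x$ is the top of its own color-chain), so $x \in T$; then UCB1 applied at $x$ reads $\sum_{w \in U(x,P)} -\theta_{\kappa(w),\kappa(x)} \le 1$, and $y \in U(x,P)$ already contributes $-\theta_{b,\kappa(x)} \ge 1$, so $U(x,P) = \{y\}$ and $\theta_{b,\kappa(x)} = -1$ (giving the simply-laced edge at the top) and $x$ has no other neighbors above it. Repeating this analysis at the coatom of $x$, and so on down, forces a linear Hasse diagram with all edges simply laced; the bookkeeping is routine but one must be careful that at each stage the relevant element is still in the top tree of the truncated poset and that connectedness of $\Gamma$ (equivalently of $P$, by Remark \ref{PconniffGamma}) is preserved so that the "unique maximal element" machinery of Lemma \ref{LemUniqueMaxElt} keeps applying. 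I expect the main obstacle to be making this descent rigorous without circular reference to the proposition being proved — handling it as a direct structural induction on $|P|$ with the stronger hypothesis "$P$ has a maximal element $y$ that is the unique element of its color" is the way I'd organize it.
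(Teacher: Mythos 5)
There is a genuine gap at the very first step: the assertion that ``$y$ being the unique element of color $b$ forces $y$ to be maximal in $P$'' is false, and nothing in ``$T$ is a filter and $\kappa|_T$ is a bijection'' implies it. The hypothesis of non-slant-irreducibility only hands you \emph{some} covering pair $x \to y$ in $T$ with $y$ the unique element of its color; for a three-element chain whose elements carry distinct colors $a$, $b$, $c$ from bottom to top (a type $A$ standard poset, hence $\Gamma$-colored minuscule and not slant irreducible, with $T = P$), the pair consisting of the bottom two elements qualifies and its upper member is not maximal in $P$. Since your entire strategy --- deleting the top element and inducting on $|P|$, as well as the ``descending'' reformulation at the end --- is anchored on $y$ being the unique maximal element, the argument does not get started; and if $y$ is interior, a downward descent from $y$ says nothing about the part of $P$ lying above $y$. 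To repair this you would have to prove that a qualifying pair with $y$ maximal can always be \emph{chosen}, which is not obvious and essentially amounts to the proposition itself.

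The paper anchors at the opposite end and thereby avoids the problem: it chooses a qualifying pair with $x$ \emph{minimal} among all such pairs, applies S3 to the order dual $P^*$ (legitimate because the dual of a $\Gamma$-colored minuscule poset is again $\Gamma$-colored minuscule) to conclude $x \in T \cap T^*$, hence that $x$ is the unique element of its color, and then uses UCB1 at $x$ together with the minimality of the chosen pair to rule out any lower cover of $x$. Lemma \ref{LemUniqueMaxElt} then makes $x$ the minimum of $P$, and S3 marches upward from $x$ to give the chain. Your uses of LCB1 at $y$ and UCB1 at $x$ to force $1$-adjacency in both directions are sound and mirror the paper's final paragraph, but they only become usable once the extremality of the anchor element is in hand. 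A secondary circularity: in your descent you justify ``$x$ is maximal in $P_{\kappa(x)}$'' by saying $y$ is the only larger element of an adjacent color, which is exactly what you later extract from UCB1 \emph{after} knowing $x \in T$; if you retain $x \in T$ as part of the chosen data this is harmless, but in your reformulated claim, where $x$ is merely ``a unique coatom,'' it is unsupported.
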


%: 4: Added "that" to final line of first paragraph.

\begin{proof}
Suppose $P$ is not slant irreducible as a $\Gamma$-colored $d$-complete poset and let $T$ be the top tree of $P$.  Choose a pair of neighbors $x \to y$ in $T$ such that $y$ is the only element of its color in $P$ and $x$ is minimal in the set of all pairs of neighbors satisfying this condition.  Note that $y$ is in the top tree $T^*$ of the order dual $P^*$ as well.  Applying S3 to $P^*$, we see $x \in T^*$.  Since $x \in T \cap T^*$, we see that $x$ is the only element of its color in $P$.

Let $a := \kappa(x)$ and $b := \kappa(y)$.  By NA we have $a \sim b$.  Suppose that there is some $u \in P$ with $u \to x$, and set $c := \kappa(u)$.  The minimality of the choice of $x$ shows $u \notin T$.  Thus there is some $v \in P_c$ with $u < v$.  By NA we see that $a \sim c$, and by AC we see that $x < v$.  Since $x$ is maximal in $P_a$, by UCB1 we know there is at most one element above $x$ with color adjacent to $a$.  This shows $v = y$, and hence $c = b$.  This contradicts that $y$ is the only element of its color in $P$.  Hence $x$ is minimal in $P$.  Since $x$ is the unique minimal element of $P$ by Lemma \ref{LemUniqueMaxElt}, the filter generated by $x$ is $P$.  By potentially repeated applications of S3 moving upward from $x$, we see $P$ is a chain and $P = T$.

Let $e,f \in \Gamma$ be adjacent.  Since $P = T$, let $s$ and $t$ be the respective unique elements of colors $e$ and $f$ in $P$.  Since $P$ is a chain, without loss of generality assume $s < t$.  Then UCB1 (respectively LCB1) shows $f$ is 1-adjacent to $e$ (respectively $e$ is 1-adjacent to $f$).  Hence $\Gamma$ is simply laced.
%Now suppose for a contradiction that $\Gamma$ is multiply laced.  Then there are colors $e$ and $f$ such that $e$ is $k$-adjacent to $f$ for some $k \ge 2$.  Since $P = T$, let $s$ and $t$ be the unique elements in $P$ with respective colors $e$ and $f$.  These elements must be comparable by AC.  If $s < t$, the LCB1 is violated.  If $t < x$, then UCB1 is violated.  Hence $\Gamma$ is simply laced.
\end{proof}

%: Updated reference to our new internal number.

Suppose $P$ is a connected finite $\Gamma$-colored minuscule poset and $\Gamma$ is multiply laced.  The preceding proposition shows $P$ is slant irreducible as a $\Gamma$-colored $d$-complete poset.  Since $P$ is a dominant minuscule heap by Theorem \ref{Thm9Str3}, we can apply Stembridge's classification of slant irreducible dominant minuscule heaps colored by multiply laced Dynkin diagrams.  For convenience, we restate that result here using our conventions for notation and terminology.

\begin{theorem}[Theorem 4.2 of \cite{Ste}]\label{ThmStembridgeClassification}
Let $P$ be a slant irreducible $\Gamma$-colored $d$-complete poset.
Then either $\Gamma$ is simply laced or it has the form
% and assume $\Gamma$ is multiply laced.  
%Then $\Gamma$ has the form
\begin{center} 
\begin{tikzpicture}
        \tikzset{edge/.style = {->,> = latex'}}
        \node at (-3,0){$\bullet$};
        \node at (-1.5,0){$\bullet$};
        \node at (0,0){$\bullet$};
        \node at (1.5,0){$\bullet$};
        \node at (4.5,0){$\bullet$};
        \node at (6,0){$\bullet$};
        \node at (7.5,0){$\bullet$};
        \node at (3,0){$\bullet$};
        \draw (-3,0) -- (-1.5,0) (0,0) -- (1.5,0);
        \draw[dashed] (-1.5,0) -- (0,0);
        \draw (3,0) -- (4.5,0) (6,0) -- (7.5,0);
        \draw[dashed] (4.5,0) -- (6,0);
        \draw[edge] (2.95,.05)  to (1.55,0.05);
       \draw[edge] (1.55,-.05) to (2.95,-0.05);
       \node at (2.25,.25){2};
       \node at (2.25,-.25){1};
       
       \draw [
    decoration={
        brace, mirror
    },
    decorate
] (-3,-.3) -- (1.5,-.3);
\draw [
    decoration={
        brace, mirror
    },
    decorate
] (3,-.3) -- (7.5,-.3);
    \node at (-.75,-.6){$i$ nodes};
    \node at (5.25,-.6){$j$ nodes};
        \end{tikzpicture}
\end{center} 
for some $i,j \ge 1$.  If $\Gamma$ is multiply laced, then either
\begin{enumerate}[(a),nosep]
    \item We have $i = 1$ and $j \ge 1$ and $P$ is isomorphic to an order filter of a poset of the form displayed in Figure \ref{FigSlantIrredDomMinHeaps}(a) containing at least one element of each color, or 
    \item We have $i > 1$ and $j \ge 1$ and $P$ is isomorphic to a poset of the form displayed in Figure \ref{FigSlantIrredDomMinHeaps}(b).
\end{enumerate}
%If $i = 1$, then $P$ is an order filter of a poset of the form displayed in Figure \ref{FigSlantIrredDomMinHeaps}(a) such that $P$ contains at least one element of each color.  If $i > 1$, then $P$ is a poset of the form displayed in Figure \ref{FigSlantIrredDomMinHeaps}(b).  The Dynkin diagram $\Gamma$ is the diagram given in the respective Figure \ref{FigSlantIrredDomMinHeaps}(a) or Figure \ref{FigSlantIrredDomMinHeaps}(b).
%Then $P$ and $\Gamma$ must be one of the following:
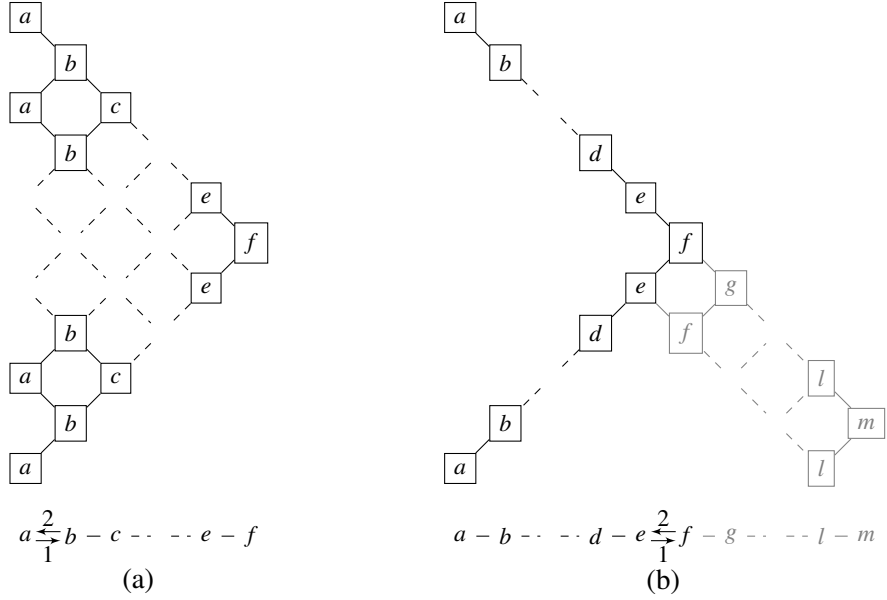
\begin{figure}[h!]
\begin{center} 
\begin{minipage}{0.35\textwidth} 
    \begin{tikzpicture}[scale=.6]
    \tikzset{edge/.style = {->,> = latex'}}
        \node[draw] (Z) at (0,0){{\footnotesize $a$}};
        \node[draw] (Y) at (1,-1){{\footnotesize $b$}};
        \node[draw] (X) at (2,-2){{\footnotesize $c$}};
        \node[draw] (W) at (4,-4){{\footnotesize $e$}};
        \node[draw] (V) at (5,-5){{\footnotesize $f$}};
        
        \node[draw] (U) at (0,-2){{\footnotesize $a$}};
        \node[draw] (T) at (1,-3){{\footnotesize $b$}};
        \node[draw] (S) at (4,-6){{\footnotesize $e$}};
        
        \node[draw] (R) at (1,-7){{\footnotesize $b$}};
        \node[draw] (Q) at (2,-8){{\footnotesize $c$}};
        
        \node[draw] (P) at (0,-8){{\footnotesize $a$}};
        \node[draw] (O) at (1,-9){{\footnotesize $b$}};
        \node[draw] (N) at (0,-10){{\footnotesize $a$}};
        
        \node (1) at (3,-3){};
        \node (2) at (2,-4){};
        \node (3) at (3,-5){};
        \node (4) at (0,-4){};
        \node (5) at (1,-5){};
        \node (6) at (2,-6){};
        \node (7) at (3,-7){};
        \node (8) at (0,-6){};
        
        \draw (Z) -- (Y) -- (X) -- (T) -- (U) -- (Y);
        \draw (W) -- (V) -- (S);
        \draw (N) -- (O) -- (Q) -- (R) -- (P) -- (O);
        \draw[dashed] (X) -- (1) -- (W);
        \draw[dashed] (T) -- (2) -- (3) -- (S);
        \draw[dashed] (4) -- (5) -- (6) -- (7);
        \draw[dashed] (8) -- (R);
        \draw[dashed] (T) -- (4);
        \draw[dashed] (1) -- (2) -- (5) -- (8);
        \draw[dashed] (W) -- (3) -- (6) -- (R);
        \draw[dashed] (S) -- (7) -- (Q);
        
        \node (A) at (0,-11.5){{\footnotesize $a$}};
        \node (B) at (1,-11.5){{\footnotesize $b$}};
        \node (C) at (2,-11.5){{\footnotesize $c$}};
        \node (E) at (4,-11.5){{\footnotesize $e$}};
        \node (F) at (5,-11.5){{\footnotesize $f$}};
        \node (9) at (3,-11.5){};
        
        \node (AT) at (0,-11.4){};
        \node (BT) at (1,-11.4){};
        \node (AB) at (0,-11.6){};
        \node (BB) at (1,-11.6){};
        
        \draw (B) -- (C) (E) -- (F);
        \draw[dashed] (C) -- (9) -- (E);
        \draw[edge] (BT) to (AT);
        \draw[edge] (AB) to (BB);
        
        \node at (.5,-11.1){{\footnotesize $2$}};
        \node at (.5,-11.9){{\footnotesize $1$}};
        
        \node at (2.5,-12.5){(a)};
    \end{tikzpicture}
    \end{minipage}%
\begin{minipage}{0.35\textwidth}
    \begin{tikzpicture}[scale=.6]
    \tikzset{edge/.style = {->,> = latex'}}
        \node[draw] (Z) at (0,0){{\footnotesize $a$}};
        \node[draw] (Y) at (1,-1){{\footnotesize $b$}}; 
        \node (3) at (2,-2){};
        \node[draw] (X) at (3,-3){{\footnotesize $d$}};
        \node[draw] (W) at (4,-4){{\footnotesize $e$}};
        \node[draw] (V) at (5,-5){{\footnotesize $f$}};
        \node [draw, gray] (U) at (6,-6){{\footnotesize $g$}};
        \node (4) at (7,-7){};
        \node [draw, gray] (T) at (8,-8){{\footnotesize $l$}};
        \node [draw, gray] (S) at (9,-9){{\footnotesize $m$}};
        \node[draw] (R) at (4,-6){{\footnotesize $e$}};
        \node [draw, gray] (Q) at (5,-7){{\footnotesize $f$}};
        \node (P) at (6,-8){};
        \node (5) at (7,-9){};
        \node [draw, gray] (O) at (8,-10){{\footnotesize $l$}};
        \node[draw] (N) at (3,-7){{\footnotesize $d$}};
        \node (6) at (2,-8){};
        \node[draw] (M) at (1,-9){{\footnotesize $b$}};
        \node[draw] (L) at (0,-10){{\footnotesize $a$}};
        
        \node (A) at (0,-11.5){{\footnotesize $a$}};
        \node (B) at (1,-11.5){{\footnotesize $b$}};
        \node (1) at (2,-11.5){};
        \node (D) at (3,-11.5){{\footnotesize $d$}};
        \node (E) at (4,-11.5){{\footnotesize $e$}};
        \node (F) at (5,-11.5){{\footnotesize $f$}};
        \node [gray] (G) at (6,-11.5){{\footnotesize $g$}};
        \node (2) at (7,-11.5){};
        \node [gray] (I) at (8,-11.5){{\footnotesize $l$}};
        \node [gray] (J) at (9,-11.5){{\footnotesize $m$}};
        
        \node at (4.5,-12.5){(b)};
        
        \node (ET) at (4,-11.4){};
        \node (EB) at (4,-11.6){};
        \node (FT) at (5,-11.4){};
        \node (FB) at (5,-11.6){};
        
        \draw (Z) -- (Y) (X) -- (W) -- (V);
        \draw [gray] (T) -- (S) -- (O);
        \draw [gray] (Q) -- (U);
        \draw [gray] (U)-- (V);
        \draw (V) -- (R);
        \draw [gray] (R) -- (Q);
        \draw (R) -- (N) (M) -- (L);
        \draw[dashed] (Y) -- (3) -- (X);
        \draw [gray,dashed] (U) -- (4) -- (T) (P) -- (5) -- (O);
        \draw [dashed] (N) -- (6) -- (M);
        \draw (A) -- (B) (D) -- (E);
        \draw[gray] (F) -- (G) (I) -- (J);
        \draw[dashed] (B) -- (1) -- (D);
        \draw[gray,dashed](G) -- (2) -- (I);
        \draw[dashed,gray] (P) -- (4) (5) -- (T);
        \draw [dashed, gray](P) -- (Q);
        
        \draw [edge] (FT) to (ET);
        \draw [edge] (EB) to (FB);
        
        \node at (4.5,-11.1){{\footnotesize 2}};
        \node at (4.5,-11.9){{\footnotesize 1}};
    \end{tikzpicture}
\end{minipage}
    \caption{The two families of slant irreducible $\Gamma$-colored $d$-complete posets from Theorem \ref{ThmStembridgeClassification}.  In both families, vertically aligned elements have the same color.  The first family includes all order filters of the displayed poset containing at least one element of each color.  (a) When $i = 1$ and $j \ge 1$, a $\Gamma$-colored minuscule poset of type $B$.  (b) When $i > 1$ and $j = 1$, a $\Gamma$-colored minuscule poset of type $C$.}
    %\caption{(a) A $\Gamma$-colored minuscule poset of type $B$.  (c) When the gray nodes and edges are not present (i.e. $j = 1$ as in the statement of Theorem \ref{ThmStembridgeClassification}), a $\Gamma$-colored minuscule poset of type $C$.}
    %Refer to the Dynkin diagram in the statement of Theorem \ref{ThmStembridgeClassification}.  (a) A $\Gamma$-colored minuscule poset of type $B$.  (b) When $i > 1$ and $j = 1$, this poset is a $\Gamma$-colored minuscule poset of type $C$.}
    %\caption{(a) A $\Gamma$-colored minuscule poset of type $B$.  Order filters of this poset containing at least one element of every color gives the first family of slant irreducible $\Gamma$-colored $d$-complete posets for multiply laced $\Gamma$.  (b) When $j = 1$ (i.e. without the gray nodes and edges), a $\Gamma$-colored minuscule poset of type $C$.  For any $j \ge 1$, this is the second family of slant irreducible $\Gamma$-colored $d$-complete posets for multiply laced $\Gamma$.}
    %\caption{The two families of slant irreducible $\Gamma$-colored $d$-complete posets and their Dynkin diagrams.  The gray vertices and edges are not present if $j = 1$, which is the case when $P$ is $\Gamma$-colored minuscule.}
    \label{FigSlantIrredDomMinHeaps}
\end{center}
\end{figure}
\end{theorem}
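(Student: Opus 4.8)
The proof is a translation of Theorem 4.2 of \cite{Ste} into the terminology of this paper, so the work is bookkeeping rather than new mathematics. I would begin by invoking Theorem \ref{Thm9Str3} to replace ``finite $\Gamma$-colored $d$-complete poset'' by ``dominant minuscule heap'' throughout, and then check that the notion of slant irreducibility defined in Section \ref{SectionDefinitions} agrees with Stembridge's: both declare a connected such poset reducible exactly when its top tree contains a covering relation $x \to y$ with $y$ the unique element of its color. Under this dictionary the statement to be proved becomes precisely Stembridge's classification of slant irreducible dominant minuscule heaps: first the claim that a multiply laced $\Gamma$ supporting such a heap must be a path bearing a single double bond with the $2$-$1$ decoration shown and arms of $i \ge 1$ and $j \ge 1$ nodes, and then the explicit list of heaps in cases (a) and (b).

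The substance of the argument is matching Stembridge's families to Figures \ref{FigSlantIrredDomMinHeaps}(a) and (b). I would identify his $i = 1$ family with the collection of order filters of the poset drawn in Figure \ref{FigSlantIrredDomMinHeaps}(a) that contain at least one element of every color --- the finite truncations of an infinite repeating pattern, which are the ``type $B$'' heaps --- and his $i > 1$ family with the poset(s) of Figure \ref{FigSlantIrredDomMinHeaps}(b), the ``type $C$'' heaps. Carrying this out requires: (i) reconciling Stembridge's parametrization of these families with the integers $i,j$, including the observation that the figure in (b) is drawn with $j = 1$; (ii) checking that it is order \emph{filters}, not order ideals, that appear in (a), since the passage between Stembridge's conventions and ours may involve an order-duality; and (iii) confirming that the orientation of the arrow in the displayed Dynkin diagram, and which endpoint of a multiple bond carries the decoration, match his normalizations. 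As a closing sanity check I would also verify directly that the posets in Figure \ref{FigSlantIrredDomMinHeaps} satisfy EC, NA, AC, ICE2, and UCB1, so that they are indeed $\Gamma$-colored $d$-complete.

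The main obstacle is entirely one of faithful translation: I must be careful that no heap in Stembridge's list is lost or double-counted, that the ``at least one element of each color'' proviso in (a) exactly picks out which order filters of the Figure \ref{FigSlantIrredDomMinHeaps}(a) poset remain colored by all of $\Gamma$ rather than by a proper subdiagram, and that multiply laced path diagrams in which both arms have length at least two --- notably $F_4$ --- genuinely admit no slant irreducible $\Gamma$-colored $d$-complete poset, so that cases (a) and (b) are exhaustive. This last point is implicit in Stembridge's enumeration but deserves to be spelled out in our setting.
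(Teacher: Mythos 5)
This statement is an imported result: the paper gives no proof of it, but simply restates Stembridge's Theorem 4.2 in its own conventions, relying on Theorem \ref{Thm9Str3} to identify finite $\Gamma$-colored $d$-complete posets with dominant minuscule heaps. Your proposal---treating the task as a faithful translation of Stembridge's classification, with the bookkeeping checks you list---is exactly the approach the paper takes, so it is correct and essentially identical.
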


We use this result to classify the connected finite $\Gamma$-colored minuscule posets when $\Gamma$ is multiply laced.  To state this classification concisely, we name the types of $\Gamma$-colored posets appearing in it.

\begin{definition}\label{DefTypeBC}
Let $P$ be a finite $\Gamma$-colored poset.  Refer to the integers $i,j \ge 1$ used in Theorem \ref{ThmStembridgeClassification}.
\begin{enumerate}[(a),nosep]
    \item If $P$ is isomorphic to the poset displayed in Figure \ref{FigSlantIrredDomMinHeaps}(a) for $i = 1$ and some $j \ge 1$, then $P$ has \emph{type $B$}.
    \item If $P$ is isomorphic to the poset displayed in Figure \ref{FigSlantIrredDomMinHeaps}(b) for some $i > 1$ and $j = 1$, then $P$ has \emph{type $C$}.
\end{enumerate}
\end{definition}

\noindent It is routine to check that $\Gamma$-colored posets of types $B$ and $C$ are $\Gamma$-colored minuscule.  We have required $i > 1$ in the type $C$ definition to prevent overlap between types $B$ and $C$.  Requiring $j = 1$ for posets of type $C$ results in the absence of gray nodes and edges in Figure \ref{FigSlantIrredDomMinHeaps}(b).  So posets of type $C$ are chains.

\begin{theorem}\label{ThmMultLacedClassify}
Let $P$ be a connected finite $\Gamma$-colored minuscule poset and assume $\Gamma$ is multiply laced.  Then $P$ has type $B$ or type $C$.
\end{theorem}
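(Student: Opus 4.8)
The plan is to combine Proposition~\ref{PropChainorSlantIrreducible}, Theorem~\ref{Thm9Str3}, and Stembridge's classification (Theorem~\ref{ThmStembridgeClassification}) and then rule out the cases that fail the minuscule condition LCB1. First I would invoke Proposition~\ref{PropChainorSlantIrreducible}: since $\Gamma$ is multiply laced, $P$ cannot be a chain with $\Gamma$ simply laced, so $P$ is slant irreducible as a $\Gamma$-colored $d$-complete poset. By Theorem~\ref{Thm9Str3}, $P$ is a dominant minuscule heap, so Theorem~\ref{ThmStembridgeClassification} applies. Because $\Gamma$ is multiply laced, $\Gamma$ has the two-branch form from that theorem with parameters $i,j\ge 1$, and we are in case (a) (where $i=1$, $j\ge1$, and $P$ is an order filter of the poset in Figure~\ref{FigSlantIrredDomMinHeaps}(a) containing at least one element of each color) or case (b) (where $i>1$, $j\ge1$, and $P$ is exactly the poset in Figure~\ref{FigSlantIrredDomMinHeaps}(b)).

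Next I would dispose of case (a). Here $i=1$, so there is a single color $a$ on the short branch, and $a$ is $2$-adjacent to $b$ (the neighbor on the long branch), while $b$ is $1$-adjacent to $a$. I must show that among the filters of the displayed poset containing at least one element of each color, exactly those that also satisfy LCB1 are the ones described as ``type $B$''; in fact I expect to show that LCB1 forces the filter to be the \emph{entire} poset of Figure~\ref{FigSlantIrredDomMinHeaps}(a) (for the given $j$), which is precisely type $B$ by Definition~\ref{DefTypeBC}(a). The mechanism: if the filter omitted some element of the repeating motif, there would be a color $c$ whose minimal element $x$ in $P_c$ has an overly large lower frontier $L(x,P)$, violating LCB1; one checks this by examining the bottom of the Figure~\ref{FigSlantIrredDomMinHeaps}(a) poset, where the chain $\cdots a\to b\to a$ at the base already realizes the full allowed lower-census for each color, so any proper filter would either violate LCB1 at the newly-minimal element of some color or fail to contain all colors. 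Alternatively, and perhaps more cleanly, I would apply the order dual: $P^*$ is again $\Gamma$-colored minuscule hence a dominant minuscule heap, and running Stembridge's classification on $P^*$ (noting $P^*$ has the branch form with $j$ and $i$ swapped) pins down which filters of Figure~\ref{FigSlantIrredDomMinHeaps}(a) are self-dual-compatible, again yielding the full poset.

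Then case (b) is immediate: $P$ is isomorphic to the poset in Figure~\ref{FigSlantIrredDomMinHeaps}(b) with $i>1$ and some $j\ge1$. But $\Gamma$-colored minuscule posets satisfy LCB1, and I would observe (as in the discussion preceding this theorem, or by direct inspection of the bottom of Figure~\ref{FigSlantIrredDomMinHeaps}(b)) that LCB1 forces $j=1$: if $j>1$ there is a gray color $g$ whose minimal element has a lower frontier of census exceeding $1$ because of the $2$-adjacency at the branch point, contradicting LCB1. Hence $j=1$, and by Definition~\ref{DefTypeBC}(b) this $P$ has type $C$. Combining the two cases, $P$ has type $B$ or type $C$.

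The main obstacle I anticipate is the bookkeeping in case (a): carefully verifying that LCB1 (together with the requirement that the filter contains every color) forces the filter to be all of Figure~\ref{FigSlantIrredDomMinHeaps}(a), rather than some strictly smaller filter. This requires tracking the lower frontier census $\sum_{y\in L(x,P)}-\theta_{\kappa(y),\kappa(x)}$ at the minimal element of each color in an arbitrary filter, using the $1$- and $2$-adjacency data encoded in the branch-form Dynkin diagram, and recognizing that the repeating motif in the figure is rigid enough that deleting any of its elements breaks LCB1 somewhere. Using the order-dual symmetry of the minuscule condition is the cleanest route and is the approach I would ultimately take to keep this step short.
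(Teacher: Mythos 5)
Your outline matches the paper's proof: invoke Proposition~\ref{PropChainorSlantIrreducible} to get slant irreducibility, feed that into Stembridge's Theorem~\ref{ThmStembridgeClassification}, and then use the minuscule hypothesis to cut the two multiply laced families down to types $B$ and $C$. Where you differ is in how the cutting is done, and the paper's route is worth adopting because it is shorter and avoids the bookkeeping you flag as your main obstacle: by Lemma~\ref{LemUniqueMaxElt} a connected finite $\Gamma$-colored minuscule poset has a unique minimal element. In case (a) the paper makes exactly one census check --- if that minimal element did not have color $a$, the minimal element of $P_a$ would cover an element of color $b$, and $-\theta_{ba}=2$ already violates LCB1 --- and then observes that the only element of color $a$ in the Figure~\ref{FigSlantIrredDomMinHeaps}(a) poset lying below elements of every other color is its global minimum, so the filter is everything. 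In case (b) no census is needed at all: for $j>1$ the Figure~\ref{FigSlantIrredDomMinHeaps}(b) poset has two minimal elements, so $j=1$. Your proposed verifications would work but need repair in one place: in case (b) you locate the LCB1 failure at a gray color ``because of the $2$-adjacency at the branch point,'' but the $2$ enters the lower frontier census only for the color $e$ on the left of the double bond (a $y$ of color $f$ contributes $-\theta_{fe}=2$ to $L_e$); the clean violation for $j>1$ is at the lower element of color $e$, which has both a $d$-colored and an $f$-colored element beneath it, giving census $1+2=3$. The gray colors are simply laced to their neighbors, so any violation there has a different cause. This is a fixable mislocation rather than a fatal gap, but as written that step would not go through.
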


\begin{proof}
We know from Proposition \ref{PropChainorSlantIrreducible} that $P$ must be slant irreducible as a $\Gamma$-colored $d$-complete poset, so we apply Theorem \ref{ThmStembridgeClassification}.  Referring to that result, first suppose that $i = 1$.  The value of $j$ then determines the Dynkin diagram $\Gamma$ and a poset $Q$ as displayed in Figure \ref{FigSlantIrredDomMinHeaps}(a).  We know by Theorem \ref{ThmStembridgeClassification} that $P$ is isomorphic to a filter of $Q$ containing at least one element of each color.  By Lemma \ref{LemUniqueMaxElt} we know $P$ has a unique minimal element.  If the minimal element of $P$ does not have color $a$ (as displayed in Figure \ref{FigSlantIrredDomMinHeaps}(a)), then the minimal element in $P_a$ covers an element of color $b$ in $P$, violating LCB1.  Hence the unique minimal element of $P$ must have color $a$.  The only element of color $a$ in $Q$ that is less than an element of every other color in $\Gamma$ is the minimal element of $Q$.  Thus $P \cong Q$ and so $P$ has type $B$.  %It is routine to see that $P$ is $\Gamma$-colored minuscule.

Now suppose that $i > 1$.  The value of $j$ determines the Dynkin diagram $\Gamma$ and poset $P$ isomorphic to the poset displayed in Figure \ref{FigSlantIrredDomMinHeaps}(b).  Since $P$ must have a unique minimal element by Lemma \ref{LemUniqueMaxElt}, we must have $j = 1$.  Thus $P$ has type $C$.
\end{proof}

%\newpage 

\section{Simply laced top tree lemmas and \texorpdfstring{$\Gamma$}{Gamma}-colored minuscule chains}\label{SectionSLTopTreeLemmas}

Having classified the connected finite $\Gamma$-colored minuscule posets colored by multiply laced $\Gamma$ in Section \ref{SectionMultiplyLaced}, we turn to the simply laced case.  We first establish results regarding top trees of such posets that will be used here and in later sections.  In Theorem \ref{ThmChainandSimplyLaced}, we obtain the classification of $\Gamma$-colored minuscule posets that are chains when $\Gamma$ is simply laced, completing one case given in Proposition \ref{PropChainorSlantIrreducible}.

\begin{lemma}\label{LemSimplyColored}
Let $P$ be a finite $\Gamma$-colored $d$-complete poset and assume $\Gamma$ is simply laced.  Let $x < y$ in $P$ and suppose that $[x,y]$ is a chain.  Then there are no repeated colors in this interval.
\end{lemma}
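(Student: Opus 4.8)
The plan is to argue by contradiction, passing to a \emph{shortest} repeated interval and then deriving either a failure of simple-lacedness or a cycle in $\Gamma$. Note that $P$ is a dominant minuscule heap by Theorem \ref{Thm9Str3}, so it satisfies S4, i.e.\ $\Gamma$ is acyclic; this fact will do the heavy lifting. So suppose for contradiction that some color is repeated in $[x,y]$. Since every closed subinterval of the chain $[x,y]$ is itself a chain, I would choose, among all pairs $u < v$ of equal-colored elements lying in $[x,y]$, one for which $[u,v]$ has the fewest elements. Writing $a := \kappa(u) = \kappa(v)$ and $u = z_0 \to z_1 \to \cdots \to z_m = v$ with $m \ge 1$, the first task is the bookkeeping: minimality of $[u,v]$ forces that no color is repeated among $z_1,\dots,z_{m-1}$ and that $a$ does not occur in $(u,v)$, so the colors $\kappa(z_0),\dots,\kappa(z_{m-1})$ are pairwise distinct; moreover, since $[u,v]$ is convex, any element of $P$ of color $a$ strictly between $u$ and $v$ would lie in $(u,v)$, which cannot happen, so $u$ and $v$ are consecutive elements of color $a$ in $P$ and ICE2 applies to the pair $u < v$.

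Then I would dispose of the small cases. If $m = 1$, then $u \to v$ are neighbors with $\kappa(u) = \kappa(v) = a$, contradicting NA because $\theta_{aa} = 2 > 0$ means $a \not\sim a$. If $m = 2$, then $(u,v) = \{z_1\}$ with $\kappa(z_1) \ne a$, and ICE2 forces $-\theta_{\kappa(z_1),a} = 2$; but $\Gamma$ is simply laced, so $-\theta_{\kappa(z_1),a} \in \{0,1\}$, a contradiction. This is exactly where the simply-laced hypothesis is used: it rules out the second alternative in S2, namely a single element of color $2$-adjacent to $a$ sitting between consecutive elements of color $a$.

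Finally, for $m \ge 3$: by NA we have $\kappa(z_0) \sim \kappa(z_1) \sim \cdots \sim \kappa(z_{m-1}) \sim \kappa(z_m) = \kappa(z_0)$, and since $\kappa(z_0),\dots,\kappa(z_{m-1})$ are $m \ge 3$ distinct colors, this exhibits a cycle of length $m$ in $\Gamma$, contradicting acyclicity. Having reached a contradiction in every case, the lemma follows. I expect the only delicate point to be the bookkeeping in the first paragraph — verifying that the minimal choice of $[u,v]$ genuinely yields distinct colors on $z_0,\dots,z_{m-1}$ and that ``consecutive in the subchain'' upgrades to ``consecutive in $P$'', both of which rest on convexity of closed intervals — after which the three cases are immediate. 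An alternative organization is induction on $|[x,y]|$, but the minimal-interval phrasing seems cleanest here.
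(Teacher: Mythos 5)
Your proof is correct and follows essentially the same route as the paper: pass to a minimal repeated-color interval, use ICE2 together with simple-lacedness to force at least three distinct colors strictly exceeding the endpoints' shared color, and then read off a cycle in $\Gamma$ via NA, contradicting S4. Your version just makes explicit the small-case analysis ($m=1,2$) that the paper compresses into the sentence ``the interval $[u,v]$ must contain at least four elements by ICE2.''
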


\begin{proof}
Suppose for a contradiction that there are repeated colors in $[x,y]$.  Choose $u < v$ in $[x,y]$ such that $\kappa(u) = \kappa(v)$ and there are no other repeated colors in $[u,v]$.  Since $\Gamma$ is simply laced, the interval $[u,v]$ must contain at least four elements by ICE2.  By NA, moving up in the interval $[u,v]$ corresponds to a path in $\Gamma$.  This path must be a cycle since it contains at least three distinct colors starting and ending at the same color.  Hence this path is a cycle in $\Gamma$, contradicting S4.
\end{proof}

%: Added clarifying sentence that Ch(P) is not equal to T in multiply laced types.

The next lemma shows that in the finite simply laced case, the top tree $T$ is equal to the set $Ch(P)$ of all elements whose principal filter is a chain.  This is not true for posets of type $B$ or $C$.  The set $Ch(P)$ is one version of how top trees are defined for the $d$-complete posets of Proctor (e.g. \cite{Kokyuroku,ProScop}).

\begin{lemma}\label{LemTequalsC}
Let $P$ be a finite $\Gamma$-colored $d$-complete poset with top tree $T$ and assume $\Gamma$ is simply laced.  Then $T = Ch(P)$.
\end{lemma}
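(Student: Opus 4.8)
The plan is to prove the two inclusions $T \subseteq Ch(P)$ and $Ch(P) \subseteq T$ separately; only the second will use that $\Gamma$ is simply laced. For $T \subseteq Ch(P)$, fix $x \in T$ and let $F_x := \{y \in P \ | \ y \ge x\}$ be its principal filter. Since S3 holds for $P$ by Theorem \ref{Thm9Str3}, the top tree $T$ is a filter of $P$, so $F_x \subseteq T$. Every element of $F_x$ lies in $T$ and is hence maximal in its color, so S3 says it is covered by at most one element of $P$. Because $P$ is finite, I would start at $x$ and repeatedly pass to the (unique) covering element, producing a saturated chain $x = x_0 \to x_1 \to \cdots \to x_n$ with $x_n$ maximal in $P$; uniqueness of covers forces this chain to be the only saturated chain out of $x$. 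Then any $y \in F_x$ is joined to $x$ by a saturated chain, which must be an initial segment of $x_0 \to \cdots \to x_n$, so $y \in \{x_0,\dots,x_n\}$; hence $F_x = \{x_0,\dots,x_n\}$ is a chain and $x \in Ch(P)$. This half does not use simple-lacedness.

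For $Ch(P) \subseteq T$, let $x \in Ch(P)$ and put $a := \kappa(x)$. Suppose for contradiction that $x \notin T$, i.e.\ $x$ is not maximal in $P_a$; then there is $x' \in P_a$ with $x < x'$. Both $x$ and $x'$ lie in the chain $F_x$, so the interval $[x,x']$ is a chain. By Lemma \ref{LemSimplyColored} (which is where the simply laced hypothesis enters) this interval contains no repeated colors, contradicting $\kappa(x) = a = \kappa(x')$. Hence $x$ is maximal in $P_a$, so $x \in T$, completing the proof.

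I do not expect a real obstacle. The one point meriting a little care is in the first inclusion: one must verify that $F_x$ is genuinely a chain, not merely a finite filter possessing a unique maximal saturated chain. This is exactly what the uniqueness-of-covers observation (from S3, applied to every element of $F_x \subseteq T$) combined with finiteness of $P$ delivers, as indicated above.
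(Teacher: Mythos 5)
Your proof is correct and follows essentially the same route as the paper's: the inclusion $T \subseteq Ch(P)$ via S3 and iterated uniqueness of covers, and the inclusion $Ch(P) \subseteq T$ via Lemma \ref{LemSimplyColored} applied to the chain interval from $x$ up to a maximal element. Your extra care in verifying that the principal filter is genuinely a chain just makes explicit what the paper leaves to the reader.
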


\begin{proof}
First suppose $x \in T$.  Then S3 shows $x$ is covered by at most one element, and if this element exists it is in $T$.  By repeating this reasoning, it follows that the filter generated by $x$ is a chain.  Hence $x \in Ch(P)$.  

%: 5: Added "we have" to final sentence.

Now suppose $x \in Ch(P)$.  Let $y \ge x$ be a maximal element of $P$.  Since $x \in Ch(P)$, the interval $[x,y]$ is the filter generated by $x$ and is a chain.  It has no repeated colors by Lemma \ref{LemSimplyColored}.  Hence we have $x \in T$.
\end{proof}

Let $P$ be a finite poset and let $T$ be a convex subposet of $P$.  We define $Gr(T)$ to be the simple graph whose vertices are the elements of $T$ and whose edges correspond to the covering relations in $T$.  In other words, the graph $Gr(T)$ is just the Hasse diagram of $T$ without its partial order.  We typically use $Gr(T)$ when $T$ is the top tree of a finite $\Gamma$-colored $d$-complete poset colored by simply laced $\Gamma$.  In this case, the next result shows that $Gr(T) \cong \Gamma$ as graphs, so $T$ is essentially a copy of $\Gamma$ embedded in $P$.  Knowing this, Proctor originally defined $d$-complete posets without reference to externally present Dynkin diagrams.

% Let $P$ be a finite $\Gamma$-colored $d$-complete poset colored by simply laced $\Gamma$.
% %Given a finite $\Gamma$-colored poset with top tree $T$ and simply laced $\Gamma$, 
% If $T$ is the top tree of $P$, then define $Gr(T)$ to be the simple graph whose vertices are the elements of $T$ and whose edges correspond to the covering relations in $T$.  In other words, the graph $Gr(T)$ is just $T$ without its partial order.  The next result shows that $Gr(T) \cong \Gamma$ as graphs, so $T$ is essentially a copy of $\Gamma$ embedded in $P$.  Knowing this, Proctor originally defined $d$-complete posets without reference to externally present Dynkin diagrams.

%: 6: Corrected \tilde{k} to \tilde{\kappa}.

\begin{proposition}\label{PropTisomtoGamma}
Let $P$ be a finite $\Gamma$-colored $d$-complete poset with top tree $T$ and assume $\Gamma$ is simply laced.  Then $\tilde{\kappa} : Gr(T) \to \Gamma$ defined by
$\tilde{\kappa} := \kappa|_T$ is a graph isomorphism.
\end{proposition}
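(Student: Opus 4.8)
The plan is to check the three things a graph isomorphism requires: that $\tilde\kappa$ is a bijection on vertices, that it sends edges to edges, and that every edge of $\Gamma$ is the image of an edge of $Gr(T)$. The first is already in hand: by EC the elements of any fixed color are pairwise comparable, so each fiber $P_a$ has at most one maximal element, whence $T$ meets every $P_a$ in exactly one element; since $\kappa$ is surjective, $\kappa|_T$ is a bijection onto $\Gamma$ (as noted in Section \ref{SectionDefinitions}). So the work is entirely about edges, and since $\tilde\kappa$ is already a bijection it suffices to prove: for $x,y\in T$, the pair $\{x,y\}$ is a covering edge of $Gr(T)$ if and only if $\kappa(x)\sim\kappa(y)$ in $\Gamma$.

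For the ``only if'' direction I would use that $T$ is a filter of $P$ (via S3, which holds by Theorem \ref{Thm9Str3}), hence convex; therefore a covering relation in $T$ is a covering relation in $P$. Thus if $\{x,y\}$ is an edge of $Gr(T)$ then $x$ and $y$ are neighbors in $P$, and NA gives $\kappa(x)\sim\kappa(y)$, i.e. an edge of $\Gamma$.

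For the ``if'' direction, suppose $\kappa(x)=a$, $\kappa(y)=b$, with $a\sim b$. By AC the elements $x$ and $y$ are comparable in $P$; say $x<y$ (the reverse case is symmetric). Since $x\in T$ it is maximal in $P_a$, so UCB1 applies to $x$: $U(x,P)$ is finite and $\sum_{z\in U(x,P)}-\theta_{\kappa(z),a}\le 1$. This is the one place the simply laced hypothesis is essential: every $z\in U(x,P)$ has $\kappa(z)\sim a$, and simple-lacedness forces $\theta_{\kappa(z),a}=-1$, so the inequality reads $|U(x,P)|\le 1$. As $y>x$ and $b\sim a$ we have $y\in U(x,P)$, hence $U(x,P)=\{y\}$. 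Now pick $z$ with $x\to z\le y$ in $P$ (possible since $x<y$ and $P$ is locally finite); by NA, $\kappa(z)\sim a$, so $z\in U(x,P)$ and therefore $z=y$. Thus $x\to y$ in $P$, and by convexity of $T$ also $x\to y$ in $T$, so $\{x,y\}$ is an edge of $Gr(T)$ mapping under $\tilde\kappa$ to the edge $\{a,b\}$.

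Putting the two implications together with the vertex bijection proves $\tilde\kappa$ is a graph isomorphism. The genuinely substantive step is the reverse implication: one must convert the weighted frontier-census bound UCB1 into the cardinality bound $|U(x,P)|\le 1$ using that $\Gamma$ is simply laced, and then use NA plus local finiteness to force the element immediately above $x$ (on a chain up to $y$) to equal $y$. The forward implication and the vertex bijection are pure bookkeeping with convexity of the top tree and the properties EC and $\kappa$ surjective.
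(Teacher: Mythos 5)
Your proof is correct, but the substantive direction (every edge of $\Gamma$ comes from an edge of $Gr(T)$) is argued differently from the paper. The paper takes $u<v$ in $T$ with adjacent colors, invokes Lemma \ref{LemTequalsC} to say $u\in Ch(P)$ so that $[u,v]$ is a chain, invokes Lemma \ref{LemSimplyColored} to rule out repeated colors in that chain, and then uses acyclicity of $\Gamma$ (S4): the chain traces a path in $\Gamma$ from $\kappa(u)$ to $\kappa(v)$ with distinct colors, which together with $\kappa(u)\sim\kappa(v)$ would close a cycle unless $u\to v$. You instead apply UCB1 at the maximal element $x$ of $P_{a}$, use simple-lacedness to convert the weighted census bound into $|U(x,P)|\le 1$, conclude $U(x,P)=\{y\}$, and then use NA to force any cover of $x$ below $y$ to lie in $U(x,P)$ and hence equal $y$. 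Both arguments are sound. Yours is more self-contained for this statement — it avoids Lemmas \ref{LemTequalsC} and \ref{LemSimplyColored} and makes no use of S4 — and it isolates exactly where simple-lacedness enters (turning the census inequality into a cardinality bound). The paper's route reuses lemmas it needs elsewhere anyway and, as a byproduct, records the stronger fact that the whole interval $[u,v]$ is a chain without repeated colors. The bijectivity of $\tilde\kappa$ and the forward (edge-preserving) direction are handled the same way in both.
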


\begin{proof}
We know $\tilde{\kappa}$ is a bijection.
Suppose $x$ and $y$ are adjacent in $Gr(T)$ and thus neighbors in $T$.  By NA, we see that $\tilde{\kappa}(x)$ and $\tilde{\kappa}(y)$ are adjacent in $\Gamma$.

Now suppose colors $a$ and $b$ are adjacent in $\Gamma$.  Let $u,v \in Gr(T)$ be such that $\tilde{\kappa}(u) = a$ and $\tilde{\kappa}(v) = b$.  By AC, we know that $u$ and $v$ are comparable in $P$, so assume without loss of generality that $u < v$.  Since $u \in Ch(P)$ by Lemma \ref{LemTequalsC}, we know that $[u,v]$ is a chain.  By NA, moving up in this chain induces a path in $\Gamma$ starting at $a$ and ending at $b$.  There are no repeated colors in $[u,v]$ by Lemma \ref{LemSimplyColored}.  Since $a \sim b$, this path produces a cycle in $\Gamma$ unless $u \to v$.  Since $\Gamma$ is acyclic by S4, this shows $u$ and $v$ are adjacent in $Gr(T)$.
\end{proof}

%: Changed "corollary" to "result" to reflect relabeling of Corollary 4.4 to Proposition 4.4.

We obtain an important result to establish the uniqueness of certain $\Gamma$-colored $d$-complete posets.

%: 7: Changed "Corollary" to "Proposition"

\begin{proposition}\label{PropSLIsom}
Let $P$ be a finite poset for which $Ch(P) = P$.  Then $P$ can be realized uniquely as a $\Gamma$-colored $d$-complete poset colored by simply laced $\Gamma$.
\end{proposition}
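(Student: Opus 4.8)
The plan is to show that the hypothesis $Ch(P)=P$ rigidly determines both the Dynkin diagram and the coloring, and then to verify the defining axioms directly. First I would observe that $Ch(P)=P$ means the principal filter of every element is a chain, so each element of $P$ is covered by at most one other element; hence the Hasse diagram of $P$, viewed as a simple graph, is a forest (in particular acyclic). I would then set $\Gamma := Gr(P)$ (in the notation introduced before Proposition \ref{PropTisomtoGamma}), which is simply laced because it is a simple graph, and take $\kappa := \mathrm{id}_P$, regarding each element of $P$ as a node of $\Gamma$; this coloring is trivially surjective.

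For the existence half I would check EC, NA, AC, ICE2, and UCB1 directly for $(P,\Gamma,\kappa)$. Because $\kappa$ is injective, no two distinct elements of $P$ share a color, so EC and ICE2 hold vacuously and each color class $P_a$ is a singleton. NA holds since $x\to y$ makes $\{x,y\}$ an edge of $Gr(P)=\Gamma$; AC holds since any edge of $\Gamma$ is a covering relation of $P$, hence a comparability. For UCB1, fix $a\in\Gamma$, let $x$ be its unique preimage (trivially maximal in $P_a=\{x\}$), and take $y\in U(x,P)$: then $y>x$ and $\{x,y\}$ is an edge of $Gr(P)$, which forces $x\to y$; since $x\in Ch(P)$ has a chain as principal filter, it has at most one cover, so $U(x,P)$ has at most one element, and as $\Gamma$ is simply laced the corresponding summand $-\theta_{\kappa(y),a}$ equals $1$. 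Finiteness of $U(x,P)$ is automatic since $P$ is finite. Thus $(P,\Gamma,\kappa)$ is a $\Gamma$-colored $d$-complete poset.

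For uniqueness I would suppose $P$ is also realized as a $\Gamma'$-colored $d$-complete poset with $\Gamma'$ simply laced and coloring $\kappa'$. By Lemma \ref{LemTequalsC} the top tree of this realization equals $Ch(P)=P$, and Proposition \ref{PropTisomtoGamma} then says $\kappa'\colon Gr(P)\to\Gamma'$ is a graph isomorphism. Taking this $\kappa'$ as the graph isomorphism $\gamma\colon\Gamma=Gr(P)\to\Gamma'$ and the identity as the poset isomorphism $P\to P$, we get $\kappa'\circ\mathrm{id}_P=\gamma\circ\kappa$, so the two realizations agree up to $\Gamma$-colored poset isomorphism. Hence the realization is unique, as claimed.

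I expect the only non-formal point to be the UCB1 check: one must recognize that since every color class is a singleton and every relevant $y\in U(x,P)$ is forced to cover the corresponding element, the frontier census sum collapses to "number of covers of that element," which is exactly what $Ch(P)=P$ bounds by one. Everything else falls out of the injectivity of $\kappa$ and the identification $\Gamma=Gr(P)$, so there is no substantive obstacle beyond this bookkeeping.
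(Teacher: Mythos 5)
Your proposal is correct and follows essentially the same route as the paper's proof: define $\Gamma := Gr(P)$ with the identity coloring, check that EC and ICE2 are vacuous and that UCB1 reduces to the bound of one cover per element coming from $Ch(P)=P$, and then derive uniqueness from Lemma \ref{LemTequalsC} together with the graph isomorphism of Proposition \ref{PropTisomtoGamma}. The only cosmetic difference is that you write $\kappa'$ where the paper uses the restriction $\tilde{\kappa}'=\kappa'|_T$ as the graph isomorphism, but the substance is identical.
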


%: 9: Added ", which proves that $P$ is $\Gamma$-colored $d$-complete" to the end of the sentence.

%: 8: Removed "Set $T := P$" and removed all mention of $T$ from the proof.

\begin{proof}
Define $\Gamma := Gr(P)$  and equip $P$ with the identity coloring $\kappa : P \to \Gamma$ on nodes.  Since there is only one element of each color, the properties EC and ICE2 are satisfied vacuously.  The properties NA and AC are satisfied by construction.  For any $x \in P$, the sum $\sum_{y \in U(x,P)} -\theta_{\kappa(y),\kappa(x)}$ is the number of elements covering $x$.  Since $Ch(P) = P$, this sum is at most one for every element of $P$.  Thus UCB1 holds, which proves that $P$ is $\Gamma$-colored $d$-complete.

%: Deleted "following" from "the following commutative diagram"

%: Deleted "some other" and "map" from first sentence

Now suppose $P$ is $\Gamma'$-colored $d$-complete for simply laced Dynkin diagram $\Gamma'$ and coloring $\kappa' : P \to \Gamma'$.  By Lemma \ref{LemTequalsC}, the top tree of $P$ with respect to the coloring $\kappa'$ is $Ch(P) = P$.  Note that the maps $\text{id}_{P}$, $\text{id}_{Gr(P)}$, and $\kappa$ are all identity on the elements of $P$.  Hence we obtain the commutative diagram

\begin{center}
\begin{tikzpicture}
    \node(A) at (0,0){$P$};
    \node(B) at (2,0){$P$};
    \node(C) at (0,-2){$\Gamma$};
    \node(D) at (2,-2){$Gr(P)$};
    \node(E) at (4,-2){$\Gamma'$};
    \node at (-.3,-1){$\kappa$};
    \node at (1.7,-1){$\kappa$};
    \node at (1,0.3){$\text{id}_{P}$};
    \node at (1,-2.3){$\text{id}_{Gr(P)}$};
    \node at (3,-2.3){$\tilde{\kappa}'$};
    \node at (3.3,-0.8){$\kappa'$};
    \draw[->] (A) -- (B);
    \draw[->] (B) -- (D);
    \draw[->] (D) -- (E);
    \draw[->] (A) -- (C);
    \draw[->] (C) -- (D);
    \draw[->] (B) -- (E);
\end{tikzpicture}
\end{center}

%: 10: Added $\tilde{\kappa}'$ for clarity.

%: Shortened the first list after the diagram to more directly bring in Proposition 4.3 and highlight that we're using the needed graph isomorphism.

%: Deleted "the composition"
%: Connected the last two sentences.

\noindent where $\tilde{\kappa}' : Gr(P) \to \Gamma'$ is 
the graph isomorphism of Proposition \ref{PropTisomtoGamma}.  Hence $\tilde{\kappa}' \circ \text{id}_{Gr(P)} : \Gamma \to \Gamma'$ is a graph isomorphism.  Clearly $\text{id}_{P}$ is a poset isomorphism, so the triples $(P,\Gamma,\kappa)$ and $(P,\Gamma',\kappa')$ are isomorphic.
%\tcr{the map} $\tilde{\kappa}' := \kappa'|_{\tcr{P}}$.  This map \tcr{$\tilde{\kappa}'$} is a graph isomorphism by Proposition \ref{PropTisomtoGamma}, so the composition $\tilde{\kappa}' \circ \text{id}_{Gr(\tcr{P})} : \Gamma \to \Gamma'$ is a graph isomorphism.  Clearly $\text{id}_{\tcr{P}}$ is a poset isomorphism.  Hence the triples $(P,\Gamma,\kappa)$ and $(P,\Gamma',\kappa')$ are isomorphic.
\end{proof}

We can now classify connected finite $\Gamma$-colored minuscule posets that are chains when $\Gamma$ is simply laced.  To state this classification concisely, we first name the type of $\Gamma$-colored posets appearing in it.

\begin{definition}
Let $P$ be a finite $\Gamma$-colored poset.  If $P$ is isomorphic to a poset of the form displayed in Figure \ref{FigTypeAStandard}, then $P$ has \emph{type $A$ standard}.
\end{definition}

It is routine to check that a $\Gamma$-colored poset of type $A$ standard is $\Gamma$-colored minuscule.

\begin{theorem}\label{ThmChainandSimplyLaced}
Let $P$ be a connected finite $\Gamma$-colored minuscule poset and assume $\Gamma$ is simply laced.  If $P$ is a chain, then $P$ has type $A$ standard.
\end{theorem}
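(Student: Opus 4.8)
The plan is to show that a connected finite $\Gamma$-colored minuscule chain $P$, with $\Gamma$ simply laced, is forced to be a path-shaped Dynkin diagram $\Gamma$ with exactly one element of each color, colored in the only sensible way. First I would invoke Proposition \ref{PropTisomtoGamma}: since $P$ is a chain, every element lies in $Ch(P)$, so $T = Ch(P) = P$ by Lemma \ref{LemTequalsC}, and hence $\kappa = \tilde\kappa : Gr(P) \to \Gamma$ is a graph isomorphism. In particular $P$ has exactly one element of each color, and $Gr(P)$ — the Hasse diagram of a chain — is a simple path, so $\Gamma$ is a path $a_1 - a_2 - \cdots - a_n$. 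This already pins down $\Gamma$ up to isomorphism as type $A_n$.

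Next I would identify how the chain is colored. Writing $P$ as $x_1 < x_2 < \cdots < x_n$, the coloring $\kappa$ sends this to a walk in $\Gamma$ that, by NA, steps between adjacent colors, and by Lemma \ref{LemSimplyColored} (applicable since the whole chain is an interval with a chain structure) visits no color twice. A self-avoiding walk of length $n$ in a path graph on $n$ vertices must traverse the path monotonically from one endpoint to the other. So, after relabeling, $\kappa(x_k) = a_k$, i.e. $P$ is the chain whose $k$-th element has color $a_k$ where $a_1 - \cdots - a_n$ is a path Dynkin diagram. This is exactly the description of "type $A$ standard" in Figure \ref{FigTypeAStandard}; I would state the agreement with that figure explicitly. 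One should also double check against LCB1/UCB1: the unique element of color $a_k$ has at most one neighbor above it and at most one below it with an adjacent color, which is automatic here since $P$ is a chain and consecutive colors in a path are $1$-adjacent — but this is already guaranteed by $P$ being $\Gamma$-colored minuscule, so it needs no separate verification.

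Finally, I would address uniqueness: any two connected finite $\Gamma$-colored minuscule chains arising this way with the same length are isomorphic as $\Gamma$-colored posets, because the underlying poset (a chain of length $n$) satisfies $Ch(P) = P$, so Proposition \ref{PropSLIsom} gives a unique realization as a simply laced $\Gamma$-colored $d$-complete poset; the above analysis shows that realization is the type $A$ standard one. Thus "$P$ is a chain" $\Rightarrow$ "$P$ has type $A$ standard" as claimed.

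The only real subtlety — the "hard part," though it is mild — is making sure Lemma \ref{LemSimplyColored} genuinely applies to rule out repeated colors along the whole chain: one must observe that $P$ itself, being a chain, is an interval $[x_1, x_n]$ that is a chain, so the lemma forbids repeated colors outright, which combined with NA forces the monotone self-avoiding walk. Everything else is bookkeeping: translating "self-avoiding walk of full length in a path graph" into "$\Gamma$ is type $A$ and the chain is colored in increasing order," and checking this matches Figure \ref{FigTypeAStandard}.
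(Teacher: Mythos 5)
Your proposal is correct and rests on the same decisive tool as the paper's proof: Proposition \ref{PropSLIsom}, which gives uniqueness of the simply laced $\Gamma$-colored $d$-complete structure on a poset with $Ch(P)=P$, so that $P$ must coincide with the type $A$ standard chain of the same cardinality. The paper's proof is just that one observation; your explicit derivation that $\Gamma$ is a path and that the coloring is a monotone self-avoiding walk (via Proposition \ref{PropTisomtoGamma}, NA, and Lemma \ref{LemSimplyColored}) is correct but is already subsumed by the uniqueness step.
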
 

%: 7: Changed "Corollary" to "Proposition"

\begin{proof}
The type $A$ standard poset of cardinality $|P|$ is a chain and is $\Gamma$-colored minuscule (and hence $\Gamma$-colored $d$-complete).  Proposition \ref{PropSLIsom} shows it must be isomorphic to $P$.
\end{proof}

\begin{figure}[h!]
    \centering
    \begin{tikzpicture}[scale=.65]
        \node (A) at (0,0){{\footnotesize $a$}};
        \node (B) at (1,0){{\footnotesize $b$}};
        \node (1) at (2,0){};
        \node (D) at (3,0){{\footnotesize $d$}};
        \node (E) at (4,0){{\footnotesize $e$}};
        
        \draw (A) -- (B) (D) -- (E);
        \draw[dashed] (B) -- (1) -- (D);
        
        \node[draw] (Z) at (0,5.5){{\footnotesize $a$}};
        \node[draw] (Y) at (1,4.5){{\footnotesize $b$}};
        \node (2) at (2,3.5){};
        \node[draw] (X) at (3,2.5){{\footnotesize $d$}};
        \node[draw] (W) at (4,1.5){{\footnotesize $e$}};
        
        \draw (Z) -- (Y) (X) -- (W);
        \draw[dashed] (Y) -- (2) -- (X);
    \end{tikzpicture} 
    \caption{A $\Gamma$-colored minuscule poset of type $A$ standard.}
    \label{FigTypeAStandard}
\end{figure}
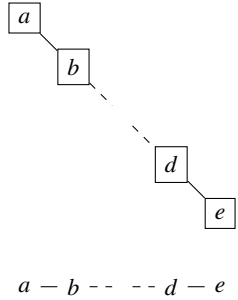

\begin{remark}\label{RemMoreThanOneColor}
Let $P$ be a connected $\Gamma$-colored minuscule poset.  If $P$ consists of a single element, then $\Gamma$ consists of one color since $\kappa$ is surjective.  If $P$ contains more than one element, then $\Gamma$ also contains more than one color; just take neighbors in $P$ (which exist since $P$ is connected) and note that they must have different colors by NA.  Hence $|P| = 1$ if and only if $|\Gamma| = 1$.  The case where $P$ consists of a single element is handled by Theorem \ref{ThmChainandSimplyLaced} and is the only case that satisfies both possibilities of Proposition \ref{PropChainorSlantIrreducible}.
\end{remark}

%\newpage 

\section{Possible slant irreducible top trees when \texorpdfstring{$\Gamma$}{Gamma} is simply laced}\label{SectionSLTopTrees}

Proposition \ref{PropChainorSlantIrreducible} showed that a connected finite $\Gamma$-colored minuscule poset must be a chain colored by simply laced $\Gamma$ or be slant irreducible as a $\Gamma$-colored $d$-complete poset.  We handled the former case in Theorem \ref{ThmChainandSimplyLaced} and the multiply laced version of the latter case in Theorem \ref{ThmMultLacedClassify}.  The remaining case is when $P$ is slant irreducible as a $\Gamma$-colored $d$-complete poset and $\Gamma$ is simply laced.  The next three sections are dedicated to this case, which culminates in Theorem \ref{ThmSimpLacedSlantIrred}.  Our main result in this section is Corollary \ref{CorTHasShapeY}, which identifies the structure of the top tree $T$ in this case.

Dominant minuscule heaps (and hence finite $\Gamma$-colored $d$-complete posets) are reformulations of the colored $d$-complete posets of Proctor \cite{Wave}, which exist only for simply laced Dynkin diagrams.  Proctor classified the uncolored versions of these posets in \cite{DDCT} using notions of top tree and slant irreducibility defined for uncolored posets.  When these posets are colored uniquely to become colored $d$-complete posets as in \cite[Prop. 8.6]{Wave}, Proctor's definitions are equivalent to the versions used here.

In Section \ref{SectionMultiplyLaced} of this paper, we applied Stembridge's classification of slant irreducible dominant minuscule heaps to classify the connected finite $\Gamma$-colored minuscule posets colored by multiply laced $\Gamma$.  It is possible to emulate that approach and use Proctor's classification of slant irreducible uncolored $d$-complete posets to classify connected finite $\Gamma$-colored minuscule posets colored by simply laced $\Gamma$; indeed, that was our approach in \cite{Str}.  However, in addition to creating the need to pass between uncolored and colored versions of these posets, that approach relies on the equivalence between Proctor's colored $d$-complete posets and finite $\Gamma$-colored $d$-complete posets (or equivalently, Stembridge's dominant minuscule heaps) colored by simply laced Dynkin diagrams.  While the equivalence between Proctor's and Stembridge's posets is known to experts, it has not appeared in any published paper and so we avoid using it.

Some of the results of this section run parallel to results obtained by Proctor in the uncolored $d$-complete setting (e.g. \cite[\S5]{DDCT}).  Given Remark \ref{RemMoreThanOneColor}, we assume that $T$ has more than one element, or equivalently, that $\Gamma$ has more than one color.  Our first three lemmas give basic facts about $T$.

\begin{lemma}\label{LemTnotaChain}
Let $P$ be a slant irreducible $\Gamma$-colored $d$-complete poset and assume $\Gamma$ is simply laced.  If the top tree $T$ contains more than one element, then $T$ is not a chain.
\end{lemma}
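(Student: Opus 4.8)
The plan is to assume for contradiction that $T$ is a chain with $n := |T| \ge 2$ and to squeeze a contradiction out of ICE2 using just one instance of slant irreducibility. First I would pin down $\Gamma$: since $\kappa|_T$ is a bijection and $Gr(T) \cong \Gamma$ by Proposition \ref{PropTisomtoGamma}, while the Hasse diagram of a chain is a path graph, $\Gamma$ must be a path. Writing $T = \{t_1 < t_2 < \cdots < t_n\}$ and $c_i := \kappa(t_i)$, we get that $\Gamma$ is the path $c_1 - c_2 - \cdots - c_n$, so $c_1$ and $c_n$ are its endpoints. Because $T$ is a filter of $P$ (by S3, via Theorem \ref{Thm9Str3}) and the $t_i$ are consecutive along the chain $T$, each $t_i \to t_{i+1}$ is a covering relation in $P$ itself. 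Applying slant irreducibility to the covering $t_{n-1} \to t_n$ shows $t_n$ is not the only element of color $c_n$, so by EC there is a largest element $u_n < t_n$ of color $c_n$, and $u_n$ is consecutive with $t_n$.

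The heart of the proof is a downward induction on $j$ from $n$ down to $1$ producing consecutive elements $u_j < t_j$ of color $c_j$ with $(u_j,t_j) \subseteq (u_{j+1},t_{j+1})$. Suppose $u_{j+1}$ and $t_{j+1}$ have been found (with $1 \le j \le n-1$). The interval $(u_{j+1},t_{j+1})$ contains no element of color $c_{j+1}$ (its endpoints are consecutive in that color) and no element of color $c_{j+2}$ (vacuous when $j+1=n$, and otherwise $(u_{j+1},t_{j+1})$ sits inside the interval between the consecutive $c_{j+2}$-elements $u_{j+2},t_{j+2}$). Since $\Gamma$ is simply laced and $c_j$ is the only neighbor of $c_{j+1}$ whose elements can occur in $(u_{j+1},t_{j+1})$, ICE2 forces this interval to contain exactly two elements of color $c_j$. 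One of them must be $t_j$: it is comparable to $u_{j+1}$ by AC, and $t_j < u_{j+1}$ is impossible since $t_j \to t_{j+1}$, so $u_{j+1} < t_j < t_{j+1}$. The second $c_j$-element is the sought $u_j$; it lies below $t_j$ because $t_j$ is maximal in $P_{c_j}$, and the containment and consecutiveness properties follow from $u_{j+1},t_{j+1}$ being consecutive in color $c_{j+1}$.

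Running this induction all the way down yields consecutive elements $u_1 < t_1$ of color $c_1$ with $(u_1,t_1) \subseteq (u_2,t_2)$, so $(u_1,t_1)$ contains no element of color $c_2$. But $c_1$ is an endpoint of the path $\Gamma$, so $c_2$ is its only neighbor, and ICE2 applied to the consecutive pair $u_1 < t_1$ demands $\sum_{w \in (u_1,t_1)} -\theta_{\kappa(w),c_1} = 2$, which is impossible since the sum is $0$. (When $n = 2$ this contradiction already surfaces at the first step.) Hence $T$ cannot be a chain. I expect the main obstacle to be maintaining the inductive invariant precisely — verifying at each stage that the relevant interval omits every color adjacent to $c_{j+1}$ except $c_j$, so that ICE2 really does isolate two elements of color $c_j$, and that the next interval inherits the analogous restriction; this hinges throughout on the fact that consecutive same-color elements bound an interval free of that color, together with the linear shape of $\Gamma$.
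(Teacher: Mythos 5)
Your proof is correct, but it takes a genuinely different route from the paper's. The paper's argument is very short: slant irreducibility applied to any covering pair in $T$ produces an element of $P$ outside $T$ (since $T$ has exactly one element of each color); taking $z$ maximal in $P - T$, the identity $T = Ch(P)$ from Lemma \ref{LemTequalsC} says the principal filter of $z$ is not a chain, and by maximality of $z$ the incomparable elements it contains lie in $T$, so $T$ is not a chain. You instead assume $T$ is a chain, use Proposition \ref{PropTisomtoGamma} to conclude $\Gamma$ is a path $c_1 - \cdots - c_n$, and run a downward ICE2 induction that propagates a ``second element'' of each color from the top of the path to the endpoint $c_1$, where ICE2 fails because $c_1$ has only one neighbor and that neighbor's elements are excluded from the relevant interval. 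I checked the induction: the key points all hold --- $t_i \to t_{i+1}$ is a covering in $P$ because $T$ is a filter; ICE2 in the simply laced case pins down exactly two $c_j$-colored elements of $(u_{j+1},t_{j+1})$ once $c_{j+1}$- and $c_{j+2}$-colored elements are ruled out; one of these must be $t_j$ since $t_j < u_{j+1}$ would break the covering $t_j \to t_{j+1}$; and consecutiveness of $u_j < t_j$ follows because a third $c_j$-colored element between them would also land in $(u_{j+1},t_{j+1})$. Your one loose phrase is attributing the consecutiveness and containment at each stage to ``$u_{j+1},t_{j+1}$ being consecutive''; the real reason is the exact count of two forced by ICE2 together with $u_{j+1} < u_j$ and $t_j < t_{j+1}$, but this is a matter of justification, not a gap. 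The paper's proof is shorter and leans on the structural fact $T = Ch(P)$; yours is more computational but extracts strictly more information (a second element of every color exists, foreshadowing the extension machinery of Section \ref{SectionExtending}), at the cost of an $n$-step induction where one maximal element suffices.
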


%: 14: Added "we see that" to third sentence.

\begin{proof}
Since $T$ is connected by Lemma \ref{LemTconnected} and $|T| > 1$, choose neighbors $x \to y$ in $T$.  Since $P$ is slant irreducible, the element $y$ is not the only element of its color in $P$.  Thus we see that $P \ne T$.  Choose an element $z$ that is maximal in $P - T$.  Since $T = Ch(P)$ by Lemma \ref{LemTequalsC}, the filter generated by $z$ is not a chain and thus contains incomparable elements.  Since $z$ is maximal in $P - T$, these elements are in $T$.  Hence $T$ is not a chain.
\end{proof}

\begin{lemma}\label{LemCoverTwo}
Let $P$ be a slant irreducible $\Gamma$-colored $d$-complete poset with top tree $T$.  Every element of $T$ that is not minimal in $T$ covers at most two elements in $P$.
\end{lemma}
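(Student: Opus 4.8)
The plan is to argue by contradiction: I would assume that some $x \in T$ which is not minimal in $T$ covers three distinct elements of $P$ and derive a violation of ICE2.

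The first step I would carry out is a preliminary observation: if $x \in T$ is not minimal in $T$, then $x$ covers (in $P$) at least one element of $T$. Since $P$ is finite, the nonempty set $\{y \in T \mid y < x\}$ has a maximal element $u$, and by that maximality no element of $T$ lies strictly between $u$ and $x$. As $u \in T$ it is maximal in $P_{\kappa(u)}$, so S3 (available by Theorem \ref{Thm9Str3}) gives $u$ a unique upper cover $u^+$ in $P$, and $u^+$ is maximal among the elements of its color, hence $u^+ \in T$. Every element of $P$ above $u$ lies weakly above $u^+$ (follow a saturated chain up from $u$, using local finiteness), so $x \ge u^+$; and $u^+ < x$ is impossible, since then $u^+ \in T$ would sit strictly between $u$ and $x$. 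Hence $u^+ = x$, i.e.\ $x$ covers $u \in T$ in $P$. This is the only place I would use the hypothesis that $x$ is not minimal in $T$.

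Next I would suppose $x$ covers $u$ together with two further distinct elements $w_1, w_2$, and put $a := \kappa(x)$. By NA the colors $\kappa(u), \kappa(w_1), \kappa(w_2)$ are all adjacent to $a$ (so none equals $a$), and they are pairwise distinct, because any two elements covered by $x$ that share a color would be comparable by EC, yet two distinct elements both covered by $x$ are incomparable. Since $x$ covers $u \in T$ within the convex subposet $T$, slant irreducibility forces $x$ not to be the unique element of color $a$; as $x$ is maximal in $P_a$, there is a largest $x' \in P_a$ with $x' < x$, and then $x', x$ are consecutive elements of color $a$ with $(x',x) \neq \emptyset$. I would then check that $u, w_1, w_2$ all lie in $(x',x)$: each is $< x$; by AC each is comparable to $x'$ (its color is adjacent to $\kappa(x') = a$); and each fails to be $\le x'$, since a different color rules out equality with $x'$, and lying below $x'$ would interpose an element strictly between it and its cover $x$. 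Finally, $(x',x)$ contains no element of color $a$, so every term $-\theta_{\kappa(z),a}$ with $z \in (x',x)$ is $\ge 0$, while those coming from $u, w_1, w_2$ are each $\ge 1$; ICE2 then yields
\[ 2 = \sum_{z \in (x',x)} -\theta_{\kappa(z),a} \ \ge\ -\theta_{\kappa(u),a} - \theta_{\kappa(w_1),a} - \theta_{\kappa(w_2),a} \ \ge\ 3, \]
the contradiction I am after.

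I expect the main obstacle to be the preliminary step — reconciling covering relations in the top tree $T$ with covering relations in $P$, which S3 makes possible but which must be stated carefully; once an element $u \in T$ covered by $x$ is in hand, the remainder is a short ICE2 census of the interval $(x',x)$.
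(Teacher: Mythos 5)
Your proposal is correct and follows essentially the same route as the paper: use slant irreducibility (via a covering pair in $T$ below the element in question) to produce consecutive elements $x' < x$ of color $\kappa(x)$, place all elements covered by $x$ inside $(x',x)$ using NA and AC, and then read off the bound from the ICE2 census of that interval. The only difference is presentational — you phrase it as a contradiction and justify the preliminary covering-pair step in more detail (your S3 argument works, though it also follows immediately from $T$ being a filter) — while the paper states the count directly.
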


\begin{proof}
Suppose $y \in T$ is not minimal in $T$.  By slant irreducibility, we know that $y$ is not the only element of its color in $P$.  Let $x < y$ be consecutive elements of color $\kappa(y)$.  By NA, any elements covered by $y$ in $P$ have colors adjacent to $\kappa(y)$.  By AC, these elements are in $(x,y)$.  Then ICE2 shows there are at most two such elements.
\end{proof}

Since a finite $\Gamma$-colored poset is $\Gamma$-colored $d$-complete if and only if it is a dominant minuscule heap by Theorem \ref{Thm9Str3}, we make use of Lemma 4.1 of \cite{Ste}.  We present that lemma here for convenience, translating to our conventions for notation and terminology.

\begin{lemma}[Lemma 4.1 of \cite{Ste}]\label{LemJRS4.1}
Let $P$ be a slant irreducible $\Gamma$-colored $d$-complete poset with top tree $T$.  If $s \in T$ covers two elements of $P$, then every element $y < s$ in $T$ covers an element outside of $T$.
\end{lemma}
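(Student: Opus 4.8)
The plan is to reduce everything to one ``diamond'' observation about a single covering pair and then propagate it downward along the branch of the top tree joining $s$ to $y$. If $s$ is minimal in $T$ there is no $y < s$ in $T$ and the statement holds vacuously, so assume $y < s$ in $T$. Since $P$ is slant irreducible it is connected, so by Lemma~\ref{LemTconnected} and S3 the top tree $T$ is a rooted tree; hence $[y,s]$ computed inside $T$ is a chain, and we may fix a saturated chain $s = t_0 \gtrdot t_1 \gtrdot \cdots \gtrdot t_k = y$ lying in $T$. I would then prove by induction along this chain that each $t_i$ covers an element of $P \setminus T$, the case $i = k$ being the assertion of the lemma.

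The engine of the induction is the following sublemma: \emph{if $v \to w$ in $T$ and $w$ covers at least two elements of $P$, then $v \to w$ in $P$ and $v$ covers an element of $P \setminus T$.} To prove it, I would first note that $T$, being a filter, is convex, so $v \to w$ in $T$ forces $v \to w$ in $P$ (an element strictly between $v$ and $w$ in $P$ would itself lie in $T$). Next, slant irreducibility applied to the covering pair $v \to w$ in $T$ gives that $w$ is not the only element of its color $b$, so there is an element $w^-$ that is consecutive with $w$ in $P_b$ from below. By NA every element covered by $w$ has a color adjacent to $b$, and by AC together with the consecutiveness of $w^-$ and $w$, every such element lies in the open interval $(w^-, w)$. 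Since $w$ covers at least two elements and S2 allows $(w^-, w)$ at most two elements, $(w^-, w)$ consists of exactly the two elements covered by $w$, one of which is $v$. A short argument — any element strictly between $w^-$ and $v$ would be a third element of $(w^-, w)$, hence comparable to the other element covered by $w$, which is impossible — shows $w^- \to v$ in $P$. Finally $w^- \notin T$, since it is not maximal in $P_b$, so $v$ covers $w^- \in P \setminus T$.

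Granting the sublemma, the induction is short. For $i = 0$: by hypothesis $s$ covers at least two elements and $t_1 \to s$ in $T$, so the sublemma produces an element $z_1 \in P \setminus T$ covered by $t_1$. For $i \ge 1$: assuming $t_i$ covers some $z_i \in P \setminus T$, convexity of $T$ shows $t_i$ also covers $t_{i+1}$, and since $t_{i+1} \in T$ while $z_i \notin T$ these are distinct, so $t_i$ covers at least two elements; as $t_{i+1} \to t_i$ in $T$, the sublemma (with $w = t_i$, $v = t_{i+1}$) shows $t_{i+1}$ covers an element of $P \setminus T$. Iterating down to $t_k = y$ finishes the proof.

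I expect the only real friction to be inside the sublemma. One must verify that \emph{every} element covered by $w$ — not merely some — lands in the open interval $(w^-, w)$, which requires NA to pin down its color and AC together with the consecutiveness of $w^-$ and $w$ to pin down its position; and one must notice that slant irreducibility is exactly the hypothesis that manufactures $w^-$, since without it $w$ could be the unique element of its color and no diamond would be available. Once the diamond $w^- \to v \to w$ in $P$ is in place, recognizing $w^- \notin T$ and carrying out the downward propagation are routine.
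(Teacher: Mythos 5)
The paper does not actually prove this lemma --- it is imported verbatim from Stembridge's Lemma 4.1 --- so your proposal has to be judged against that argument rather than anything in the text. Your overall strategy, namely a ``diamond'' sublemma for a single covering pair $v \to w$ in $T$ propagated down the (unique, by S3) saturated chain of $T$ from $s$ to $y$, is the right one and is essentially Stembridge's. The supporting reductions are all sound: convexity of the filter $T$ makes covers in $T$ covers in $P$; slant irreducibility manufactures the element $w^-$ consecutive with $w$ in $P_b$; NA, AC, and consecutiveness place every element covered by $w$ inside $(w^-,w)$; ICE2 caps the number of elements of $(w^-,w)$ with colors adjacent to $b$ at two, so $w$ covers exactly the two such elements; and $w^- \notin T$ because it is not maximal in $P_b$. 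The induction bookkeeping (the chain $s = t_0 \gtrdot \cdots \gtrdot t_k = y$ exists and is unique because each element of $T$ is covered by at most one element, and each $t_i$ with $i<k$ covers both $t_{i+1} \in T$ and an element outside $T$) is also correct.

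The one step that does not hold up as written is your justification that $w^- \to v$. You argue that an element $z$ with $w^- < z < v$ ``would be a third element of $(w^-,w)$, hence comparable to the other element covered by $w$, which is impossible.'' Neither half of that follows: ICE2 (or S2) only bounds the number of elements of $(w^-,w)$ whose colors are \emph{adjacent} to $b = \kappa(w)$, so a third element of the interval carrying a color distant from $b$ is not immediately excluded; and nothing you have stated forces such a $z$ to be comparable to $v'$ (and even $z \le v'$ would yield no contradiction). The claim is nevertheless true and has a one-line repair: take a saturated chain from $w^-$ up to $v$ and look at its first element $z_1$. By NA, $\kappa(z_1)$ is adjacent to $b$, and $w^- < z_1 \le v < w$ puts $z_1$ in $(w^-,w)$, so $z_1 \in \{v,v'\}$; it cannot be $v'$, since $z_1 \le v$ while $v$ and $v'$ are incomparable (both are covered by $w$). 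Hence $z_1 = v$ and the chain has length one, i.e., $w^- \to v$. With that substitution your proof is complete.
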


Now we obtain the key result regarding the structure of $T$.

\begin{proposition}\label{PropUniqueSplittingElt}
Let $P$ be a slant irreducible $\Gamma$-colored $d$-complete poset and assume $\Gamma$ is simply laced.  If the top tree $T$ contains more than one element, then there exists a unique element in $T$ that covers more than one element in $T$.  This element covers exactly two elements in $T$ and is comparable to every element in $T$.
\end{proposition}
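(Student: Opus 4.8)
The plan is to treat $T$ as a finite rooted tree and play two quantitative facts against each other: the covering bound of Lemma~\ref{LemCoverTwo} and the downward propagation of Lemma~\ref{LemJRS4.1}. Call $x \in T$ a \emph{fork} if $x$ covers more than one element of $T$. By Lemma~\ref{LemCoverTwo} a fork is non-minimal in $T$ and covers at most two elements of $P$, hence covers \emph{exactly} two elements of $T$; so the ``exactly two'' assertion will be automatic once uniqueness is established. Throughout I will use that $P$ has a unique maximal element (Lemma~\ref{LemUniqueMaxElt}), which is then the unique maximal element of $T$, and that $T = Ch(P)$ (Lemma~\ref{LemTequalsC}), so the principal filter in $P$ of any $x \in T$ is a chain and lies inside $T$. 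Consequently any two elements $x,y \in T$ have a unique minimal common upper bound $x \vee y$ in $T$ (the two filters are chains through the maximal element), and I will invoke this construction repeatedly.

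First I would establish that a fork exists. Since $|T| > 1$, Lemma~\ref{LemTnotaChain} says $T$ is not a chain, so it contains incomparable elements $z, z'$; set $w := z \vee z'$. Incomparability gives $w > z$ and $w > z'$ strictly, and both the chain above $z$ and the chain above $z'$ pass through $w$. Writing $p \to w$ with $z \le p$ and $p' \to w$ with $z' \le p'$, minimality of $w$ forces $p \ne p'$, so $w$ covers the two distinct elements $p, p'$ of $T$ and is a fork.

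Next I would show that \emph{no fork lies strictly below another fork in $T$}. If $s$ is a fork, then $s$ covers two elements of $P$, so Lemma~\ref{LemJRS4.1} yields that every $y < s$ in $T$ covers an element of $P$ outside $T$. Were such a $y$ itself a fork, it would cover its two elements of $T$ together with this element outside $T$, at least three elements of $P$ in all, contradicting Lemma~\ref{LemCoverTwo} since $y$ is non-minimal in $T$. Uniqueness of the fork follows: if $s \ne s'$ were both forks, then either they are comparable, which is impossible by the preceding sentence, or they are incomparable, in which case $w := s \vee s'$ is a fork (by the argument of the previous paragraph) with $s < w$ and $s' < w$, again contradicting ``no fork strictly below a fork.'' Denote the unique fork by $s$.

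Finally, for comparability: if some $z \in T$ were incomparable to $s$, then $w := z \vee s$ would satisfy $w > s$, and the construction above makes $w$ a fork, contradicting uniqueness of $s$. Hence every element of $T$ is comparable to $s$, and $s$ covers exactly two elements of $T$ by the opening observation, which finishes the proof. I expect the delicate point to be the ``no fork below a fork'' step: one must note that the element supplied by Lemma~\ref{LemJRS4.1} genuinely lies outside $T$ and is therefore distinct from the two $T$-elements covered by a fork, and that ``non-minimal in $T$'' is precisely the hypothesis that activates Lemma~\ref{LemCoverTwo}. Everything else is routine rooted-tree bookkeeping made available by $T = Ch(P)$.
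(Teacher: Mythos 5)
Your proof is correct and follows essentially the same route as the paper: both arguments rest on Lemma \ref{LemTnotaChain} for existence, and on playing Lemma \ref{LemJRS4.1} against Lemma \ref{LemCoverTwo} (a fork forces every lower element of $T$ to cover something outside $T$, which caps it at two covers) for uniqueness and comparability. The only difference is organizational — you package the argument around joins in $T$ and a ``no fork below a fork'' principle, deriving comparability from uniqueness, whereas the paper establishes comparability first via a path in the Hasse diagram of $T$ — but the key lemmas and the core contradiction are identical.
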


\begin{proof}
Since $T = Ch(P)$ by Lemma \ref{LemTequalsC}, every element of $T$ is covered by at most one element.  We know $T$ is connected by Lemma \ref{LemTconnected}.  We also know by Lemma \ref{LemTnotaChain} that $T$ is not a chain.  Hence some element $s \in T$ covers more than one element in $T$.  Then Lemma \ref{LemCoverTwo} shows $s$ covers exactly two elements in $T$.

Suppose for a contradiction that $w \in T$ is incomparable to $s$.  
Again using that every element of $T$ is covered by at most one element, a path from $s$ to $w$ in the Hasse diagram of $T$ must pass through some element $u > s$ that covers two elements in $T$.
%By repeated applications of S3 if necessary, the filters generated by $s$ and $w$ are chains contained in $T$.  Since $P$ has a unique maximal element by Lemma \ref{LemUniqueMaxElt}, there is an element $u \in T$ such that $s < u$ and $w < u$ and $u$ covers two elements in $T$.  
Then $s$ must cover an element outside of $T$ by Lemma \ref{LemJRS4.1}.  Thus $s$ covers three or more elements in $P$, violating Lemma \ref{LemCoverTwo}.  So every element in $T$ is comparable to $s$.

If an element greater than $s$ covers two elements of $T$, then Lemma \ref{LemJRS4.1} shows $s$ covers an element outside of $T$.  Similarly, if an element less than $s$ covers two elements of $T$, then Lemma \ref{LemJRS4.1} shows this element must also cover an element outside of $T$.  Both situations violate Lemma \ref{LemCoverTwo}.  Thus $s$ is the unique element that covers more than one element in $T$.
\end{proof}

Let $P$ be a slant irreducible $\Gamma$-colored $d$-complete poset and assume $\Gamma$ is simply laced.  Suppose that the top tree $T$ contains more than one element.  Let $s \in T$ be the unique element covering two elements in $T$ guaranteed by Proposition \ref{PropUniqueSplittingElt}
and call $s$ the \emph{splitting element} of $T$.  
Then $s$ is comparable to every element in $T$.  Let $i \ge 1$ be the number of elements in the principal filter generated by $s$, which is a chain since $T = Ch(P)$.  Since only $s$ can cover more than one element in $T$, the two elements covered by $s$ are each maximal elements of disjoint saturated chains in $T$ below $s$.  Suppose these chains consist of $j \ge 1$ and $k \ge 1$ elements in $T$, respectively.  Following notation used in \cite{DDCT}, we say that $T$ has \emph{shape $Y(i;j,k)$}.  We also write $T = Y(i;j,k)$.  Our convention is to require $k \ge j \ge 1$ and for the branch containing $j$ (respectively $k$) elements to appear on the left (respectively right) side of the Hasse diagram of $T$ or $P$, as in Figure \ref{FigY}.  This discussion obtains the following result.

%: 34: Added $\qed$ to end of statement.

%: Changed ``has shape'' to ``='' so that the box fits on the same line as the corollary statement.

\begin{corollary}\label{CorTHasShapeY}
Let $P$ be a slant irreducible $\Gamma$-colored $d$-complete poset and assume $\Gamma$ is simply laced.  If the top tree $T$ contains more than one element, then $T = Y(i;j,k)$ for some integers $i \ge 1$ and $k \ge j \ge 1$. $\qed$
\end{corollary}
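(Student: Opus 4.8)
The proof of Corollary \ref{CorTHasShapeY} should be essentially immediate given the machinery built up in Proposition \ref{PropUniqueSplittingElt} and the surrounding discussion. The plan is to simply assemble the pieces: Proposition \ref{PropUniqueSplittingElt} guarantees a unique element $s \in T$ covering more than one element of $T$, that $s$ covers exactly two such elements, and that $s$ is comparable to every element of $T$. Combined with $T = Ch(P)$ (Lemma \ref{LemTequalsC}), which forces every element of $T$ to be covered by at most one element, the structure of $T$ is pinned down.

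First I would observe that since $s$ is comparable to all of $T$, the principal filter of $s$ in $T$ is a totally ordered subset, and it is a chain in $P$ by $T = Ch(P)$; let $i \ge 1$ count its elements. Next, because $s$ is the \emph{only} element of $T$ covering more than one element of $T$, the two elements $s$ covers in $T$ each sit atop a saturated chain in $T$, and these two chains are disjoint (any common element would have to be comparable to $s$ from below along two incomparable routes, forcing some element strictly below $s$ to cover two elements of $T$, contradicting uniqueness of $s$). Call the cardinalities of these two chains $j$ and $k$; after relabeling we may assume $k \ge j \ge 1$. Then every element of $T$ lies in the filter above $s$, in the left branch, or in the right branch, so $T$ is exactly the poset $Y(i;j,k)$ as drawn in Figure \ref{FigY}, with the convention on left/right branches as stated. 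This is precisely the content of the preceding paragraph of the excerpt, so the corollary follows.

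The only step requiring a moment's care is confirming that the two branches below $s$ are genuinely disjoint saturated chains with no further branching — but this is forced: any element $t$ of $T$ below $s$ is covered by at most one element (by $T = Ch(P)$), and if $t$ covered two elements of $T$ then $t$ would be a second element covering more than one element of $T$, contradicting Proposition \ref{PropUniqueSplittingElt}. Hence neither branch branches further, each is a saturated chain, and disjointness follows since an element common to both branches would be below $s$ via two incomparable covering paths. There is no real obstacle here; the work was done in Proposition \ref{PropUniqueSplittingElt}, and the corollary is just the bookkeeping that repackages that proposition into the named shape $Y(i;j,k)$. Accordingly the proof can be given as a single short paragraph, or simply discharged by the $\qed$ already attached to the statement, since the displayed discussion immediately preceding the corollary constitutes its proof.
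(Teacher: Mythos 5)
Your proposal is correct and matches the paper's own argument: the paper likewise derives the corollary directly from the discussion preceding it, using Proposition \ref{PropUniqueSplittingElt} together with $T = Ch(P)$ to identify the splitting element $s$, the chain of $i$ elements above it, and the two disjoint saturated chains of $j$ and $k$ elements below it. The paper treats that discussion as the entire proof and attaches $\qed$ to the statement, exactly as you suggest.
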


\begin{figure}[h!]
    \centering
    \begin{tikzpicture}[scale=.65]
        \node (a) at (0,0){$\bullet$};
        \node (b) at (1,-1){$\bullet$};
        \node (c) at (2,-2){};
        \node (d) at (3,-3){$\bullet$};
        \node (e) at (2,-4){$\bullet$};
        \node (f) at (1,-5){$\bullet$};
        \node (g) at (4,-4){$\bullet$};
        \node (h) at (5,-5){};
        \node (i) at (6,-6){$\bullet$};
        \draw (a) -- (b) (e) -- (d) -- (g);
        \draw[dashed] (b) -- (c) -- (d) (e) -- (f) (g) -- (h) -- (i);
        %\draw (-.25,0) -- (3,-3.25) -- (3.25,-3) -- (0,0.25) -- (-.25,0) (2,-3.75) -- (0.75,-5) -- (1,-5.25) -- (2.25,-4) -- (2,-3.75) (4,-3.75) -- (3.75,-4) -- (6,-6.25) -- (6.25,-6) -- (4,-3.75);
        \node at (3,-1.25){{\footnotesize $i$ elements}};
        \node at (0,-4.25){{\footnotesize $j$ elements}};
        \node at (6.6,-4.75){{\footnotesize $k$ elements}};
        \node at (5.25,-3){{\footnotesize Splitting element $s$}};
        
        \draw [
    decoration={
        brace
    },
    decorate
] (0,0.25) -- (3.25,-3);

\draw [
    decoration={
        brace, mirror
    },
    decorate
] (2,-3.75) -- (.75,-5);

\draw [
    decoration={
        brace
    },
    decorate
] (4,-3.75) -- (6.25,-6);
        
    \end{tikzpicture}
    \caption{A top tree $T$ of shape $Y(i;j,k)$ for some integers $i \ge 1$ and $k \ge j \ge 1$.}
    \label{FigY}
\end{figure}
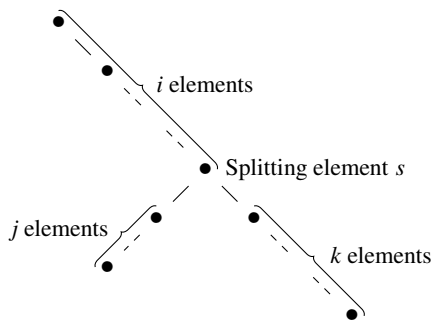

% \begin{corollary}\label{CorTHasShapeY}
% Let $P$ be a slant irreducible $\Gamma$-colored $d$-complete poset and assume $\Gamma$ is simply laced.  If the top tree $T$ contains more than one element, then $T$ has shape $Y(i;j,k)$ for some integers $i \ge 1$ and $k \ge j \ge 1$.
% \end{corollary}

%: 7: Changed "Corollary" to "Proposition"

\begin{remark}\label{RemDoubleDip}
Let $i \ge 1$ and $k \ge j \ge 1$ and let $T = Y(i;j,k)$.  
%The simple graph $\Gamma := Gr(T)$ and identity coloring $\kappa : T \to \Gamma$ makes $T$ a $\Gamma$-colored $d$-complete poset.  Proposition \ref{PropTisomtoGamma} shows this is the only simply laced $\Gamma$ and $\Gamma$-coloring (up to isomorphism) for which $T$ is $\Gamma$-colored $d$-complete.  
By Proposition \ref{PropSLIsom} the poset $T$ can be realized uniquely as a $\Gamma$-colored $d$-complete poset colored by simply laced $\Gamma$.
This will be used in the next two sections, where we start with a $\Gamma$-colored $d$-complete poset $T = Y(i;j,k)$ and examine its potential to be the top tree of a $\Gamma$-colored minuscule poset for some simply laced $\Gamma$.  Note that we must require $T$ to be $\Gamma$-colored $d$-complete since filters of $\Gamma$-colored minuscule posets are $\Gamma$-colored $d$-complete.
\end{remark}

\section{Extending top trees \texorpdfstring{$T = Y(i;j,k)$}{T=Y(i;j,k)} to \texorpdfstring{$\Gamma$}{Gamma}-colored minuscule posets}\label{SectionExtending}

In this section, we describe a process that starts with a $\Gamma$-colored $d$-complete poset $T = Y(i;j,k)$ for some $i \ge 1$ and $k \ge j \ge 1$ and either produces the unique $\Gamma$-colored minuscule poset with top tree $T$, or shows that no such poset exists.  This is the outcome of Theorem \ref{ThmAlgorithmExtension}, which is our main result in this section.  This process occurs through a downward extension similar to the process used by Proctor \cite{DDCT} to classify the uncolored $d$-complete posets.  We also used a similar downward extension process to show connected infinite $\Gamma$-colored $d$-complete posets are order filters of full heaps \cite[\S 5]{dC-class}.

%We first make the main extension definition to start off this process.

%: Removed ``and write'' from the last line of the definition, as it was a duplicate.

\begin{definition}\label{DefExtendableBya}
Let $P$ be a connected finite $\Gamma$-colored $d$-complete poset and let $a \in \Gamma$.  We say $P$ is \emph{extendable by $a$} if there is a connected finite $\Gamma$-colored $d$-complete poset $P'$ such that 
\begin{enumerate}[(i),nosep]
\item The poset $P$ is a filter of $P'$, 
\item The difference $P' - P$ consists of a single element, which we call the \emph{extending element}, and 
\item The color of the extending element is $a$.  
\end{enumerate} 
We also say $P'$ is an \emph{extension of $P$} by $a$ and write $P' \xrightarrow[]{a} P$.
\end{definition}

Our first lemma shows that the color extensions of Definition \ref{DefExtendableBya} are uniquely determined by the colored structure of the poset being extended.  Recall for $y \in P$ that $L(y,P) = \{x \in P \ | \ x < y \ \t{and} \ \kappa(x) \sim \kappa(y)\}$.

\begin{lemma}\label{LemExtensionUnique}
Let $P$ be a connected finite $\Gamma$-colored $d$-complete poset and let $a \in \Gamma$.  Suppose $P'$ is an extension of $P$ by $a$ with extending element $x$.  Then $x \to u$ in $P'$ if and only if $u$ is minimal in $L(y,P)$, where $y$ is the minimal element of $P_a$.
    %\item Let $b,c \in \Gamma$ be distinct.  If $P'' \xrightarrow[]{b} P' \xrightarrow[]{c} P$ and $Q'' \xrightarrow[]{c} Q' \xrightarrow[]{b} P$, then $P'' \cong Q''$ as colored posets.
\end{lemma}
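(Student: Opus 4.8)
The plan is to identify the unique element of $P'$ that the extending element $x$ can be covered by. First I would observe that since $P$ is a filter of $P'$ with $P' - P = \{x\}$, the element $x$ is the unique minimal element of $P'$, and every element of $P'$ strictly above $x$ lies in $P$; in particular each element covering $x$ lies in $P$. By NA, any such covering element has color adjacent to $a = \kappa(x)$. By UCB1 applied in $P'$ at the maximal element of $P'_a$ that is $\ge x$ — but actually the cleanest route is to work from the \emph{bottom}: I would apply LCB-type reasoning together with ICE2. Let me set up the key dichotomy: write $y$ for the minimal element of $P_a$. Either $P$ already contains an element of color $a$ below which $x$ can sit, in which case $y$ and $x$ become consecutive elements of color $a$ in $P'$, so ICE2 governs the open interval $(x,y)$ in $P'$; or no such element intervenes and $x$ is the only element of color $a$, but since $P$ itself has an element of color $a$ (namely $y$) this second case cannot occur once $|P_a| \ge 1$. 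So $x < y$ are consecutive elements of color $a$ in $P'$.

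The heart of the argument is then: the elements covering $x$ in $P'$ are exactly the minimal elements of $L(y,P')$, and $L(y,P') = L(y,P)$ since the only new element is $x$ and $\kappa(x) = a$ is not adjacent to itself. So it suffices to show that $x \to u$ in $P'$ if and only if $u$ is minimal in $L(y,P)$. For the forward direction: if $x \to u$ then $\kappa(u) \sim a$ by NA, and $u < y$ in $P'$ by AC (since $\kappa(u) \sim a = \kappa(y)$ and $u \ne y$; comparability plus $x < u$ forces $u < y$ as $x<y$ are consecutive of color $a$), so $u \in L(y,P)$; and $u$ must be minimal in $L(y,P)$ because anything in $L(y,P)$ below $u$ would lie strictly between $x$ and $u$ in $P'$ and have color adjacent to $a$, but also be comparable to $x$ — forcing it above $x$ — contradicting $x \to u$. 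Wait, I need to be careful: a priori $P'$ might have some element between $x$ and $u$ that is \emph{not} in $L(y,P)$; but every element of $P'$ above $x$ is in $P$, and if its color is adjacent to $a$ it lies below $y$ by AC hence is in $L(y,P)$, while if its color is not adjacent to $a$ it is not comparable to... no — it could still be comparable. Hmm. Here I would instead use that $x$ covers \emph{at most} the minimal elements of $L(y,P)$ by the covering-relation structure of a finite $\Gamma$-colored $d$-complete poset, and separately that $x$ covers \emph{at least} those (by connectivity of $P'$ / the fact that $x$ must be covered by something, combined with uniqueness considerations from Lemma~\ref{LemExtensionUnique}'s own statement being about \emph{the} extension).

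For the reverse direction: if $u$ is minimal in $L(y,P)$, I would argue $x \to u$. Since $x$ is minimal in $P'$ and $u \in P = P' - \{x\}$ with $u < y$, and $\kappa(u) \sim a$, AC forces $x$ and $u$ comparable, hence $x < u$. Suppose $x \not\to u$; then there is $w \in P'$ with $x < w < u$. Then $w \in P$, and $w < u \le$ (stuff below $y$), so $w < y$; by NA along a saturated chain from $x$ up, the color just above $x$ is adjacent to $a$, and by ICE2 on the interval $(x,y)$ combined with $\Gamma$ simply/multiply laced, I can pin down that any element covering $x$ has color adjacent to $a$ and that there are few of them; pushing this, $w$ (or an element between $x$ and $w$ covering $x$) would be a strictly smaller member of $L(y,P)$ than $u$, contradicting minimality — \emph{provided} I can show the color adjacent to $a$ is "seen" immediately above $x$, which follows from ICE2 for the interval $(x,y)$ in $P'$ and the structure of consecutive same-color elements. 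The main obstacle I anticipate is exactly this last point: ruling out an element $w$ with $x < w < u$ whose color is \emph{not} adjacent to $a$, so that $w \notin L(y,P)$ and minimality of $u$ doesn't directly apply. I expect to handle it by a short case analysis using ICE2 on $(x,y)$ (which bounds the census of $a$-adjacent colors in that interval to total weight $2$) together with the fact that $x \to u$ for \emph{some} minimal $u \in L(y,P)$ must hold — so if $|L(y,P)|$ has one minimal element the result is forced, and if it has two, UCB1/LCB1-type census bounds and the acyclicity S4 force both to be covered by $x$ with nothing in between. I would write this obstruction step out carefully; everything else is routine application of NA, AC, EC, and ICE2.
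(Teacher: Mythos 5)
Your setup is right and matches the paper's: $x$ is minimal in $P'$, every element covering $x$ lies in $P$ and by NA has color adjacent to $a$, and AC identifies $L(y,P)$ with the set of elements of the open interval $(x,y)$ whose colors are adjacent to $a$. Your forward direction essentially goes through (the cleanest way to see that a cover $u$ of $x$ satisfies $u < y$ is that $u$ and $y$ are comparable by AC, $u \ne y$ since $a$ is not adjacent to itself, and $u > y$ would place $y$ strictly between $x$ and $u$, contradicting $x \to u$). But the reverse direction --- $u$ minimal in $L(y,P)$ implies $x \to u$ --- is left genuinely open. You correctly identify the obstacle (a possible $w$ with $x < w < u$ whose color need not be adjacent to $a$), but the route you sketch around it (an ICE2 census on $(x,y)$, a case split on the number of minimal elements of $L(y,P)$, and an appeal to ``uniqueness considerations from the lemma's own statement'') is never carried out, and that last appeal is circular: you cannot invoke the conclusion of the lemma while proving it.

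The missing step has a one-line fix, which is what the paper's proof implicitly uses: do not consider an arbitrary $w$ strictly between $x$ and $u$. Instead take a saturated chain from $x$ up to $u$ and let $w$ be its first element above $x$, so that $x \to w$ and $w \le u$. By NA, $\kappa(w) \sim a$, and by the AC/interval identification $w \in L(y,P)$. If $w < u$ this contradicts the minimality of $u$ in $L(y,P)$, so $w = u$ and $x \to u$. In particular no census property (ICE2, UCB1, or LCB1) is needed anywhere in this lemma --- only NA, AC, and the minimality of $x$ in $P'$ --- so the machinery you propose to deploy is both incomplete and unnecessary.
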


\begin{proof}
The element $x$ is minimal in $P'$.  So by AC applied to $P'$, the set $L(y,P)$ consists of the elements in $(x,y)$ whose colors are adjacent to $a$.  By NA and AC, every element that covers $x$ in $P'$ has color adjacent to $a$ and is in $(x,y)$.  So $x \to u$ in $P'$ if and only if $u$ is minimal among elements in $(x,y)$ whose colors are adjacent to $a$; that is, if and only if $u$ is minimal in $L(y,P)$.
\end{proof}

%\begin{proof}
%Every element that covers $x$ has color adjacent to $a$ by NA.  So by AC, an element covers $x$ if and only if it is minimal in $(x,y)$.  Also using AC and the fact that $x$ is minimal in $P'$, all elements in 
%\end{proof}

%\begin{proof}
%First suppose that $x \to u$ in $P'$.  By NA applied to $P'$, we know $\kappa(u) \sim a$.  Using AC, we see that $u \in L(y,P)$ and that $x$ is less than every element of $L(y,P)$.  Hence $u$ is minimal in $L(y,P)$.  Now suppose $u$ is minimal in $L(y,P)$.  By AC we know $x < u$.  Any element covering $x$ has color adjacent to
%\end{proof}

%\begin{proof}
%First note by AC applied to $P'$ that $x$ is comparable to every element in $P$ whose colors are adjacent to $a$.  Since $P' - P = \{x\}$ and $P$ is a filter of $P'$, it follows that $x$ is less than each such element.  By NA applied to $P'$, the color of each element that covers $x$ is adjacent to $a$.  \tcb{Redo.}
%\end{proof}

\noindent We build on this result to show that when colors are distant, the extension order does not matter.

\begin{lemma}\label{LemExtensionUniquePt2}
Let $P$ be a connected finite $\Gamma$-colored $d$-complete poset and let $b,c \in \Gamma$ be distant.  If there are extensions $P'' \xrightarrow[]{c} P' \xrightarrow[]{b} P$ and $Q'' \xrightarrow[]{b} Q' \xrightarrow[]{c} P$, then $P'' \cong Q''$.
\end{lemma}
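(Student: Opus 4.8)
The plan is to show that both double extensions $P''$ and $Q''$ have the same underlying poset structure and coloring, by tracking exactly where the two new elements go. Write $x$ for the extending element added in the step $P' \xrightarrow{b} P$ and $x'$ for the element added in $P'' \xrightarrow{c} P'$; similarly write $w$ for the element added in $Q' \xrightarrow{c} P$ and $w'$ for the element added in $Q'' \xrightarrow{b} Q'$. I would first use Lemma \ref{LemExtensionUnique} to pin down the cover relations of $x$ in $P'$: namely $x \to u$ iff $u$ is minimal in $L(y_b, P)$, where $y_b$ is the minimal element of $P_b$. Likewise $w \to v$ in $Q'$ iff $v$ is minimal in $L(y_c, P)$, where $y_c$ is the minimal element of $P_c$.

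The key observation is that because $b$ and $c$ are distant ($\theta_{bc} = \theta_{cb} = 0$), the element of color $c$ added to $P$ is ``invisible'' to the color-$b$ frontier and vice versa. Concretely: in $Q'$, the minimal element of $Q'_b$ is still $y_b$ (the new element $w$ has color $c \ne b$), and since $c$ is not adjacent to $b$, the set $L(y_b, Q') = L(y_b, P)$ — adding $w$ does not change which elements below $y_b$ have color adjacent to $b$. Hence by Lemma \ref{LemExtensionUnique} applied to the extension $Q'' \xrightarrow{b} Q'$, the new element $w'$ covers exactly the minimal elements of $L(y_b, P)$, which are precisely the elements covered by $x$ in $P'$. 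Symmetrically, in $P'$ the minimal element of $P'_c$ is $y_c$ and $L(y_c, P') = L(y_c, P)$, so $x'$ in $P''$ covers exactly the minimal elements of $L(y_c, P)$, matching the covers of $w$ in $Q'$. Moreover $x$ and $w'$ are both minimal elements of color $b$ with the same set of upper covers, and $w$ and $x'$ are both minimal of color $c$ with the same upper covers; and $x'$ (color $c$) cannot cover $x$ (color $b$) in $P''$ since their colors are distant, hence not adjacent, so by NA they are not neighbors — and symmetrically $w'$ does not cover $w$. Therefore the map sending $P \to P$ by the identity, $x \mapsto w'$, and $x' \mapsto w$ is a bijection preserving all covering relations and colors, giving $P'' \cong Q''$.

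The main obstacle I anticipate is the bookkeeping needed to justify that the new elements are genuinely minimal and that no unexpected cover relations appear — in particular, verifying that $L(y_b, Q') = L(y_b, P)$ really does hold, which rests on the distance of $b$ and $c$ together with the fact (from Definition \ref{DefExtendableBya}(ii)) that the extending element is the only new element, so the only candidate new member of $L(y_b, \cdot)$ would be $w$, excluded by $\kappa(w) = c \not\sim b$. One should also confirm that the resulting poset with its coloring is in fact $\Gamma$-colored $d$-complete, but this follows because both $P''$ and $Q''$ are given to be extensions (hence $\Gamma$-colored $d$-complete by Definition \ref{DefExtendableBya}), so we only need the isomorphism of the underlying colored posets, which the cover-relation analysis above provides.
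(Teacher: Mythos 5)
Your proof is correct and rests on the same two ingredients as the paper's: Lemma \ref{LemExtensionUnique} pinning down the upper covers of each extending element, and the observation that since $b$ and $c$ are distant the two new elements are non-neighbors (NA), so neither extension changes the lower frontier relevant to the other. The paper packages this slightly differently---it shows $x$ and $y$ are both minimal in $P''$, so $P''-\{x\}$ is a filter and $P''$ is therefore also a $c$-then-$b$ extension of $P$, then invokes uniqueness---whereas you build the isomorphism explicitly by matching cover relations; the content is the same.
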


\begin{proof}
Let $x$ and $y$ be the extending elements of colors $b$ and $c$ in $P' - P$ and $P'' - P'$, respectively.  
By NA and the fact that $b$ and $c$ are distant, we see $x$ and $y$ are not neighbors.
Thus they are each only covered by elements in $P$ and are minimal in $P''$.
Hence $P'' - \{x\}$ is a filter of $P''$ and is therefore $\Gamma$-colored $d$-complete.
This shows $P$ can first be extended by $c$ to $P'' - \{x\}$ with extending element $y$ and then by $b$ to $P''$ with extending element $x$.  Lemma \ref{LemExtensionUnique} shows these extensions are respectively isomorphic to $Q'$ and $Q''$.
\end{proof}

Our main tool for determining when a color extension exists will be the lower frontier census, defined next.  We used a slightly more general version of this tool in Section 4 of \cite{dC-class}.

\begin{definition}
Let $P$ be a $\Gamma$-colored poset that satisfies EC and let $b \in \Gamma$.  If $y$ is minimal in $P_b$ and $L(y,P)$ is finite, then define
$$ L_b(P) := \sum_{x \in L(y,P)} -\theta_{\kappa(x),b} $$
and call it the \emph{lower frontier census} of $P$ for the color $b$.
\end{definition}

We give a necessary and sufficient condition for a color extension to exist. 

\begin{lemma}\label{LemExtensionCondition}
Let $P$ be a connected finite $\Gamma$-colored $d$-complete poset and let $a \in \Gamma$.  Then $P$ is extendable by $a$ if and only if $L_a(P) = 2$.
\end{lemma}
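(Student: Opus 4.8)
The plan is to prove the two directions separately, with the forward direction being largely a bookkeeping exercise and the reverse direction requiring an explicit construction of the extension.

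First I would handle the easy direction. Suppose $P$ is extendable by $a$, so there is an extension $P' \xrightarrow{a} P$ with extending element $x$, where $\kappa(x) = a$. Since $P' - P = \{x\}$ and $P$ is a filter of $P'$, the element $x$ is minimal in $P'$, and moreover $x$ is the unique minimal element of $P'_a$ (it lies below the old minimal element $y$ of $P_a$ by EC applied to $P'$, since $x$ and $y$ share color $a$). Applying LCB1 is not available (we only know $P'$ is $\Gamma$-colored $d$-complete, not minuscule), so instead I use ICE2: $x$ and $y$ are consecutive elements of color $a$ in $P'$ (nothing of color $a$ lies strictly between them since the only new element is $x$ itself), so $\sum_{z \in (x,y)_{P'}} -\theta_{\kappa(z),a} = 2$. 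By AC applied to $P'$, the elements of $(x,y)_{P'}$ with color adjacent to $a$ are exactly the elements of $L(y,P)$ (note $(x,y)_{P'} \setminus \{$nothing$\} = (x,y)_P$ as sets since $x$ is below everything relevant, so $L(y,P') = L(y,P)$), and terms with $\theta_{\kappa(z),a} = 0$ contribute nothing. Hence $L_a(P) = \sum_{z \in L(y,P)} -\theta_{\kappa(z),a} = 2$.

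For the reverse direction, assume $L_a(P) = 2$ and construct $P'$ explicitly. Let $y$ be the minimal element of $P_a$. Define $P' := P \sqcup \{x\}$ as a set, with $x$ below exactly the minimal elements of $L(y,P)$ (and below everything above those, by transitivity), and incomparable to everything in $P$ not above some minimal element of $L(y,P)$; formally, set $x < z$ iff $z \ge u$ for some $u$ minimal in $L(y,P)$. One checks this is a valid poset with $P$ as a filter. The bulk of the work is verifying the five axioms EC, NA, AC, ICE2, UCB1 for $P'$. Axioms involving only old elements follow from $P$ being $\Gamma$-colored $d$-complete. For EC: any element of color $a$ is $\ge y$ hence $> x$, so all of $P'_a$ is a chain, and other colors are unaffected. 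For NA: the new covering relations $x \to u$ have $u$ of color adjacent to $a$ by definition of $L(y,P)$. For AC: given a color $b \sim a$ and an element $w \in P_b$, I need $w$ comparable to $x$; if $w \ge$ some minimal element of $L(y,P)$ then $x < w$, otherwise I must show $w > x$ is false is impossible, i.e. $w$ and $x$ are comparable — here I use that $L(y,P)$ with census $2$ forces a specific small configuration and AC within $P$ to locate all color-$b$ elements relative to $y$; this is the delicate point. For ICE2: the only new consecutive-color pair is $(x,y)$ for color $a$, and $\sum_{z \in (x,y)} -\theta_{\kappa(z),a} = L_a(P) = 2$ by hypothesis; one must also check no other ICE2 obligations are disturbed, which holds since $x$ sits at the bottom and introduces no new intermediate elements between existing consecutive same-colored pairs. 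For UCB1: the only element whose maximality status changes is $x$ — but $x$ is minimal, not maximal, in $P'_a$ (since $y > x$), so $x$ imposes no UCB1 condition; and adding $x$ below some elements does not create new elements above any old maximal element, nor change which colors are maximal, so existing UCB1 conditions persist.

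The main obstacle will be the verification of AC for $P'$: I must show that every element of a color adjacent to $a$ is comparable to the new bottom element $x$. The natural worry is an element $w$ with $\kappa(w) \sim a$ that is incomparable to $x$, which would mean $w$ is not above any minimal element of $L(y,P)$. Since $\kappa(w) \sim a$ and $y \in P_a$, AC in $P$ gives $w$ comparable to $y$; if $w > y$ there is no problem, and if $w < y$ then $w \in L(y,P)$, so $w$ is above a minimal element of $L(y,P)$ (possibly itself), giving $x < w$ after all. So the argument actually closes cleanly using only AC in $P$ — the apparent difficulty dissolves, and the real content is just organizing these case checks carefully. I should also confirm $P'$ is connected (immediate: $x$ is joined to $P$ via any $u$ minimal in $L(y,P)$, which is nonempty since $L_a(P) = 2 > 0$), and that $P'$ is finite, which is clear. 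This completes the construction and hence the equivalence.
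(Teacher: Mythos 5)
Your proof is correct and takes essentially the same approach as the paper: the forward direction applies ICE2 in $P'$ to the consecutive pair $x<y$ of color $a$ and identifies the adjacent-colored elements of $(x,y)$ with $L(y,P)$ via AC, and the converse constructs $P'$ by adjoining $x$ below the minimal elements of $L(y,P)$ and verifying the axioms. The paper simply declares EC, NA, AC, and UCB1 ``immediate by construction,'' whereas you spell out the AC check in detail; your resolution of it using AC in $P$ applied to $w$ and $y$ is exactly right.
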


%: 16: Changed "let P' \to P with" to "suppose that P' \to P has".

\begin{proof}
First suppose that $P$ is extendable by $a$ and suppose that $P' \xrightarrow[]{a} P$ has extending element $x$.  Let $y$ be minimal in $P_a$.  Then $x < y$ are consecutive elements of the color $a$ in $P'$.  
By AC applied to $P'$, the set $L(y,P)$ consists of the elements in $(x,y)$ whose colors are adjacent to $a$.
%By AC applied to $P'$, all elements in $L(y,P)$ are in $(x,y)$.  Every element in $(x,y)$ with color adjacent to $a$ is in $L(y,P)$.  
Hence by ICE2 applied to $P'$ we have $L_a(P) = \sum_{z \in L(y,P)} -\theta_{\kappa(z),a} = \sum_{z \in (x,y)} -\theta_{\kappa(z),a} = 2$.

Now suppose that $L_a(P) = 2$; we produce the required poset $P'$.  
%Let $x$ be a symbol and define the color of $x$ to be $a$.  
Create a new element $x$ with color $a$ and set $P' := P \cup \{x\}$.  
Define the order on $P'$ to be the reflexive transitive closure of the covering relations in $P$ along with the covering relation(s) $x \to u$ if and only if $u$ is minimal in $L(y,P)$, where $y$ is minimal in $P_a$.  The properties EC, NA, AC, and UCB1 are immediate in $P'$ by construction since these properties hold in $P$.  Note that $x < y$ is the only occurrence of consecutive elements of the same color not contained in $P$, and so this is the only instance we must check to verify ICE2 for $P'$.  Also by construction, the elements in $(x,y)$ with colors adjacent to $a$ are precisely the elements in $L(y,P)$, and so $\sum_{z \in (x,y)} -\theta_{\kappa(z),a} = L_a(P) = 2$.  Hence ICE2 holds and $P'$ is $\Gamma$-colored $d$-complete.
\end{proof}

Lemmas \ref{LemExtensionUnique}, \ref{LemExtensionUniquePt2}, and \ref{LemExtensionCondition} focus on when and how the extension process described below will proceed.  The next result adds three scenarios which would cause it to terminate.

\begin{lemma}\label{LemExtensionFail}
Let $P$ be a connected finite $\Gamma$-colored $d$-complete poset.
\begin{enumerate}[(a),nosep]
    \item The poset $P$ is $\Gamma$-colored minuscule if and only if $L_b(P) \le 1$ for every $b \in \Gamma$.
    \item The poset $P$ is not a filter of any $\Gamma$-colored minuscule poset if there exists $b \in \Gamma$ with $L_b(P) > 2$.
    \item The poset $P$ is not a filter of any $\Gamma$-colored minuscule poset if there exist adjacent $b,c \in \Gamma$ with $L_b(P) = L_c(P) = 2$.
\end{enumerate}
\end{lemma}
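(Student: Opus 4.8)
The plan is to handle the three parts separately: part (a) by unwinding definitions, and parts (b) and (c) by assuming $P$ is a filter of a $\Gamma$-colored minuscule poset $M$ and deriving a contradiction. For (a), note that since $P$ is already $\Gamma$-colored $d$-complete, it is $\Gamma$-colored minuscule exactly when it additionally satisfies LCB1; and because $P$ is finite, the finiteness clause of LCB1 is automatic, while EC makes each $P_b$ a nonempty finite chain with a unique minimal element $y$, for which the LCB1 inequality at color $b$ reads precisely $L_b(P) = \sum_{z \in L(y,P)} -\theta_{\kappa(z),b} \le 1$. So LCB1 holds if and only if $L_b(P) \le 1$ for every $b$, which is (a).

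For (b), fix $b$ with $L_b(P) > 2$, let $y$ be minimal in $P_b$, and split on whether $y$ is still minimal in $M_b$. If it is, then LCB1 for $M$ at $y$ bounds $\sum_{z \in L(y,M)} -\theta_{\kappa(z),b}$ by $1$; since $P$ is a filter we have $L(y,P) \subseteq L(y,M)$ with all summands positive, so $L_b(P) \le 1$, a contradiction. If not, EC and local finiteness produce a greatest element $w$ of color $b$ in $M$ with $w < y$, so $w < y$ are consecutive of color $b$ in $M$ and ICE2 for $M$ gives $\sum_{z \in (w,y)} -\theta_{\kappa(z),b} = 2$; one checks $L(y,P) \subseteq (w,y)$ (any $z \in L(y,P)$ is comparable to $w$ by AC since their colors are adjacent to $b$, and $z < w$ would force $w \in P$, contradicting minimality of $y$ in $P_b$), so $L_b(P) \le 2$, again a contradiction.

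For (c), let $y_b, y_c$ be minimal in $P_b, P_c$. By the dichotomy from (b), $y_b$ cannot be minimal in $M_b$, so there is a consecutive pair $w_b < y_b$ of color $b$ in $M$ with $w_b \notin P$ (else it contradicts minimality of $y_b$ in $P_b$); since $L(y_b,P) \subseteq (w_b,y_b)$ and the census of $(w_b,y_b)$ equals $2 = L_b(P)$ with positive summands, $L(y_b,P)$ must consist of all elements of $(w_b,y_b)$ with color adjacent to $b$, so every such element lies in $P$. Symmetrically one gets $w_c < y_c$ of color $c$ with $w_c \notin P$ and the analogous statement for $c$. Using AC (as $c \sim b$) and the filter property, $w_c < y_b$ (since $y_b < w_c$ would put $w_c$ above $y_b \in P$, forcing $w_c \in P$), and symmetrically $w_b < y_c$. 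Finally $w_b, w_c$ are comparable by AC and distinct: if $w_b < w_c$ then $w_b < w_c < y_b$, so $w_c \in (w_b, y_b)$ has color $c \sim b$ and hence $w_c \in P$, a contradiction; if $w_c < w_b$ then $w_c < w_b < y_c$, so $w_b \in (w_c, y_c)$ has color $b \sim c$ and hence $w_b \in P$, a contradiction. Either way we contradict $w_b, w_c \notin P$.

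The main obstacle is the interval bookkeeping in (b) and (c): combining AC, EC, local finiteness, and the order-filter property to locate the consecutive same-colored element $w$ below $y$ in $M$, to confine $L(y,P)$ to the open interval between consecutive same-colored elements, and — in (c) — to place $w_b$ and $w_c$ inside the appropriate intervals so that the census being exactly $2$ forces a stray element into $P$. Once these containments are in place, ICE2 and LCB1 close the arguments immediately, and everything else is a direct reading of the definitions.
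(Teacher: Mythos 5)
Your proof is correct and follows essentially the same route as the paper's: part (a) is a direct restatement of LCB1 for a finite poset, and parts (b) and (c) locate a consecutive same-colored element below the minimal element of $P_b$ (and $P_c$) in the ambient minuscule poset, confine $L(y,P)$ to the resulting open interval, and derive a contradiction with ICE2. The only cosmetic difference is that you use local finiteness and a case split on whether $y$ remains minimal in $M_b$, whereas the paper first invokes Theorem \ref{ThmProp13Str3} to make the ambient poset finite and then notes that $Q-P$ must contain an element of color $b$ because $Q$ satisfies LCB1; in (c) your ``the census forces every adjacent-colored element of the interval into $P$'' step is an equivalent rephrasing of the paper's ``the interval's census exceeds $2$'' contradiction.
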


\begin{proof}
Since $P$ is finite, the condition $L_b(P) \le 1$ for every $b \in \Gamma$ is a reformulation of LCB1.  Thus (a) holds.
%The condition $L_b(P) \le 1$ for every $b \in \Gamma$ is a reformulation of LCB1 for finite posets, so (a) holds.  

%For part (b), assume for a contradiction that $P$ is a filter of a $\Gamma$-colored minuscule poset $P'$.  Since $P'$ satisfies LCB1, the set $P' - P$ must contain an element of color $b$.  Let $x$ be the maximal element of color $b$ in $P' - P$ and let $y$ be the minimal element of color $b$ in $P_b$.  Then $x < y$ are consecutive elements of color $b$.  Even if $P'$ is infinite, this element must exist by EC and local finiteness.

%Suppose that there is a color $a \in \Gamma$ for which $L_a(P) \ge 2$ and that $P$ is a filter of some $\Gamma$-colored minuscule poset $P'$.  Then $P' - P$ must contain an element of color $a$ since $P'$ satisfies LCB1.  If $x$ is the maximal element of color $a$ in $P' - P$ and $y$ is the minimal element of color $a$ in $P$, then $x < y$ are consecutive elements of color $b$ in $P'$.  For part (b), note by AC that every element in $L(y,P)$ must be in $(x,y)$, and so this violates ICE2.  Hence no such poset $P'$ exists for part (b).  For part (c), note that the maximal elements of colors $b$ and $c$ in $P' - P$ must be comparable by AC.  

%: Updated reference to internal numbering used in this paper.

Now suppose there exists a color $b \in \Gamma$ with $L_b(P) > 2$.  Suppose for a contradiction that $P$ is a filter of some $\Gamma$-colored minuscule poset $Q$.  By Theorem \ref{ThmProp13Str3} the poset $Q$ is finite.  The set $Q - P$ must contain an element of color $b$ since $Q$ satisfies LCB1.  Let $x$ be the maximal element of color $b$ in $Q - P$.  Let $y$ be minimal in $P_b$.  Then $x < y$ are consecutive elements of color $b$ in $Q$.  Every element in $L(y,P)$ must be in $(x,y)$ by AC.  Since $L_b(P) > 2$, this violates ICE2 for $Q$.  Thus (b) holds.  

Now suppose there exist adjacent $b,c \in \Gamma$ with $L_b(P) = L_c(P) = 2$.  Suppose again for a contradiction that $P$ is a filter of some $\Gamma$-colored minuscule poset $Q$.  Repeating the reasoning from the above paragraph, we can find consecutive elements $s < t$ of color $b$ and $u < v$ of color $c$, with $s,u \in Q - P$ and $t,v \in P$.  These elements are pairwise comparable by EC and AC.  If $s < u$, then $(s,t)$ violates ICE2.  If $u < s$, then $(u,v)$ violates ICE2.  Thus (c) holds.
\end{proof}

The final lemma in this section provides the starting point for the downward extension process.

\begin{lemma}\label{LemExtensionSeedUnique}
Fix integers $i \ge 1$ and $k \ge j \ge 1$ and let $T$ be the unique $\Gamma$-colored $d$-complete poset of shape $Y(i;j,k)$ colored by simply laced $\Gamma$.  Then $L_b(T) \le 2$ for all $b \in \Gamma$ and $L_c(T) = 2$ if and only if $c = \kappa(s)$, where $s$ is the splitting element of $T$.
\end{lemma}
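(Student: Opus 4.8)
The plan is to work directly with the explicit structure of $T = Y(i;j,k)$, which by Proposition \ref{PropSLIsom} is a well-defined $\Gamma$-colored $d$-complete poset whose Hasse diagram is the ``Y-shaped'' tree pictured in Figure \ref{FigY}, with one element of each color and $Gr(T) \cong \Gamma$. Since every element of $T$ has a distinct color, for each color $b$ the minimal element of $T_b$ is just the unique element $y_b$ of color $b$, and $L(y_b, T) = \{\,x \in T : x < y_b \text{ and } \kappa(x) \sim \kappa(y_b)\,\}$. Because $\Gamma$ is simply laced, $-\theta_{\kappa(x),b} = 1$ for each $x \in L(y_b,T)$, so $L_b(T)$ simply counts the elements of $T$ that lie strictly below $y_b$ and are neighbors of $y_b$ in the tree $T$ (using Proposition \ref{PropTisomtoGamma}: adjacency in $\Gamma$ matches the Hasse-diagram edges of $T$). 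Thus the statement reduces to a purely graph-theoretic count on the tree $Y(i;j,k)$: for each vertex $b$, how many of its tree-neighbors sit below it in the poset order?

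The main step is then a case analysis on the position of $b$ within the three ``arms'' of $Y(i;j,k)$. The splitting element $s$ is covered by exactly two elements of $T$ (the tops of the $j$-arm and the $k$-arm) and is below everything on the $i$-arm; its tree-neighbors that lie below it are precisely those two covered elements, so $L_{\kappa(s)}(T) = 2$. For any other vertex $b \ne \kappa(s)$: if $b$ is the global maximum (the top of the $i$-arm, or $s$ itself when $i=1$... — but $i=1$ makes $s$ the maximum, already handled), it has exactly one tree-neighbor, which lies below it, giving $L_b(T) = 1$; if $b$ is the global minimum on the $j$-arm or the $k$-arm, it covers nothing, so $L_b(T)=0$; and every remaining vertex lies in the interior of one of the linear arms (or is the top of the $j$- or $k$-arm, which is covered only by one element on that arm and covers only one element on that arm), hence has exactly two tree-neighbors, one directly above and one directly below in the poset, so $L_b(T)=1$. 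In every case $L_b(T) \le 2$, with equality only at $b = \kappa(s)$.

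The one genuinely delicate point — the ``main obstacle'' — is making sure the count is correct at $s$ and at the vertices adjacent to $s$, and confirming that no vertex other than $s$ can have two neighbors strictly below it. Here the key facts are that $T = Ch(T)$ (so every element of $T$ is covered by at most one element), and Proposition \ref{PropUniqueSplittingElt}, which guarantees $s$ is the \emph{unique} element of $T$ covering more than one element, covers exactly two, and is comparable to every element of $T$. A vertex $b$ with two tree-neighbors below it would either cover two elements (impossible except at $s$) or would be covered by an element while also covering an element \emph{and} have a third neighbor below it via an incomparable branch — but comparability-to-$s$ plus the tree structure rules this out. So once Proposition \ref{PropUniqueSplittingElt} and Lemma \ref{LemTequalsC} are invoked, the interior/endpoint count is routine, and the lemma follows. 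I would also remark explicitly that this shows $T$ is itself $\Gamma$-colored minuscule precisely when $i = 1$ (equivalently $s$ is the maximum, so it has no neighbor below contributing... — no: $s$ always has $L_{\kappa(s)}(T)=2$), so in fact $T$ is never $\Gamma$-colored minuscule when $|T|>1$, consistent with Lemma \ref{LemExtensionFail}(a); but this observation is not needed for the statement itself.
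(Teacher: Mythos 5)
Your proposal is correct and follows essentially the same route as the paper: reduce $L_b(T)$, via the bijectivity of $\kappa|_T$ and Proposition \ref{PropTisomtoGamma}, to the number of elements covered by the unique element of color $b$, and then observe that the splitting element $s$ covers exactly two elements while every other element of $T$ covers at most one. Your arm-by-arm case analysis is just a more explicit version of that final counting step, and the stray side remarks at the end are harmless and not part of the argument.
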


\begin{proof}
The poset $T$ is its own top tree.  Using Proposition \ref{PropTisomtoGamma}, the sum $L_b(T)$ for any color $b \in \Gamma$ is just the number of elements covered by the element of color $b$ in $T$.  The result follows since $s$ covers two elements in $T$ and every other element of $T$ covers at most one element.
\end{proof}

%: Changed "Proposition" to "Theorem"

Fix integers $i \ge 1$ and $k \ge j \ge 1$ and let $T$ be the unique $\Gamma$-colored $d$-complete poset of shape $Y(i;j,k)$ colored by simply laced $\Gamma$.  We now describe the extension process used to show that either $T$ is not the top tree of any $\Gamma$-colored minuscule poset, or that there is a unique $\Gamma$-colored minuscule poset with top tree $T$.  Each stage in this process has two steps.  Here we merely list the steps; we justify why each step may be performed in the proof of Theorem \ref{ThmAlgorithmExtension}.  Define $P_0 := T$.

\begin{enumerate}[nosep]
    \item \textit{Assessment step:} Start with a valid $\Gamma$-colored $d$-complete poset $P_l$ for some $l \ge 0$.  This poset is either given (if $l = 0$) or produced in an earlier stage of the process (if $l > 0$).  Assess the poset $P_l$ and determine if the process terminates, which can happen in one (or more) of three ways:
    \begin{enumerate}[(a),nosep] 
    \item The poset $P_l$ satisfies $L_b(P_l) \le 1$ for every $b \in \Gamma$.
    \item The poset $P_l$ satisfies $L_b(P_l) > 2$ for some $b \in \Gamma$.
    \item The poset $P_l$ satisfies $L_b(P_l) = L_c(P_l) = 2$ for some pair of adjacent $b,c \in \Gamma$.
    \end{enumerate}
    Otherwise, the process continues and we define the \emph{color extension set} $E_{l+1}(T) := \{b \in \Gamma \ | \ L_b(P_l) = 2\}$.
    \item \textit{Extension step:} Fix an ordering $a_1,\dots,a_r$ of the colors in $E_{l+1}(T)$ and form the sequence of extensions $P^{(r)} \xrightarrow[]{a_r} P^{(r-1)} \xrightarrow[]{a_{r-1}} \cdots \xrightarrow[]{a_2} P^{(1)} \xrightarrow[]{a_1} P$.  Define $P_{l+1} := P^{(r)}$ and return to the assessment step.
    %Extend $P_l$ by the colors in $E_{l+1}(T)$ in any order.  Once all extensions are complete, the resulting poset is denoted $P_{l+1}$ and the process returns to the assessment step.
\end{enumerate}

%: Updated reference to internal numbering used in this paper.

\noindent We say the posets $P_l$ for $l > 0$ produced in this process are \emph{rank complete} and justify that terminology in the following proof.  This process must terminate by Theorem \ref{ThmProp13Str3}.  We say the final poset produced in this process is a \emph{maximal rank complete} $\Gamma$-colored $d$-complete poset with top tree $T$.

%: 20: Changed "Proposition 6.8" to "Theorem 6.8" and updated references throughout the paper.

\begin{theorem}\label{ThmAlgorithmExtension}
Fix integers $i \ge 1$ and $k \ge j \ge 1$ and let $T$ be the unique $\Gamma$-colored $d$-complete poset of shape $Y(i;j,k)$ colored by simply laced $\Gamma$.
%Let $T = Y(i;j,k)$ for some integers $i \ge 1$ and $k \ge j \ge 1$ and let $\Gamma$ be the unique simply laced Dynkin diagram for which $T$ is $\Gamma$-colored $d$-complete.  
%Let $P$ be a maximal rank complete $\Gamma$-colored $d$-complete poset with top tree $T$.  
Then there is a unique maximal rank complete $\Gamma$-colored $d$-complete poset $P$ with top tree $T$, and
\begin{enumerate}[(a),nosep]
\item If $L_b(P) \le 1$ for all $b \in \Gamma$, then $P$ is the unique $\Gamma$-colored minuscule poset with top tree $T$.  
\item If $L_b(P) > 1$ for some $b \in \Gamma$, then there is no $\Gamma$-colored minuscule poset with top tree $T$.
\end{enumerate}
\end{theorem}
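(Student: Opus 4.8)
The plan is to establish, in order: that the extension process is well defined, that it terminates, that its output $P$ is unique, that $T$ is the top tree of every $P_l$, that $P$ is a filter of every $\Gamma$-colored minuscule poset with top tree $T$, and finally to read off (a) and (b). First I would check the two steps of the process. By Lemma \ref{LemExtensionSeedUnique}, $L_{\kappa(s)}(P_0)=2$, so the assessment conditions (a)--(c) all fail at $P_0=T$ and at least one round occurs. In general, if the process continues past $P_l$, then $E_{l+1}(T)$ is nonempty and is an \emph{independent set} of $\Gamma$: were two of its colors adjacent, assessment condition (c) would have halted the process. Hence, by the reasoning behind Lemma \ref{LemExtensionUniquePt2}, extending $P_l$ by one color of $E_{l+1}(T)$ does not disturb the minimal element of color $a_m$ nor its lower neighbor set $L(y_m,\cdot)$ for any other $a_m\in E_{l+1}(T)$, so each successive color still has census $2$ when its turn comes and Lemma \ref{LemExtensionCondition} produces the required extension. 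I would then note that $T$ remains the top tree of every $P_l$: each extending element is minimal in its poset and lies below the minimal element of its own color, so no maximal element of any color changes.

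Next I would dispatch termination and uniqueness. If the process never halted, the union of the $P_l$ would be a locally finite $\Gamma$-colored $d$-complete poset in which every color is bounded above (by its unique element of $T$), so Theorem \ref{ThmProp13Str3} forces it to be finite, a contradiction. Uniqueness of the final poset $P$ follows from Lemma \ref{LemExtensionUnique} (each single extension is forced by the colored structure), Lemma \ref{LemExtensionUniquePt2} together with the independence of $E_{l+1}(T)$ (the order of the extensions within a round is immaterial), and the fact that each color extension set $E_{l+1}(T)$ is canonically determined by $P_l$.

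The heart of the argument---and where the name \emph{rank complete} is earned---is the claim that $P_l$ is a filter of every $\Gamma$-colored minuscule poset $Q$ with top tree $T$, proved by induction on $l$. Such a $Q$ is finite by Theorem \ref{ThmProp13Str3}. The base case $l=0$ holds because $T=P_0$ is a top tree, hence a filter (S3). For the inductive step, assume $P_l$ is a filter of $Q$ and the process has not halted at $P_l$. Fix $b\in E_{l+1}(T)$ and let $y_b$ be the minimal element of color $b$ in $P_l$. Since $L(y_b,P_l)\subseteq L(y_b,Q)$ and $\sum_{z\in L(y_b,P_l)}-\theta_{\kappa(z),b}=L_b(P_l)=2$, if $y_b$ were minimal in $Q_b$ then LCB1 for $Q$ would give $2\le\sum_{z\in L(y_b,Q)}-\theta_{\kappa(z),b}\le 1$, absurd; so $Q$ has an element of color $b$ below $y_b$, and in particular $Q\neq P_l$. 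Now consider a maximal element $z$ of the ideal $Q-P_l$: everything strictly above $z$ lies in $P_l$, so (since $P_l$ is up-closed in $Q$, nothing of $P_l$ lies below $z$) $P_l\cup\{z\}$ is a filter of $Q$, hence a connected $\Gamma$-colored $d$-complete poset, hence an extension of $P_l$ by $\kappa(z)$; Lemma \ref{LemExtensionCondition} then forces $\kappa(z)\in E_{l+1}(T)$, and Lemma \ref{LemExtensionUnique} shows $z$ is determined by $\kappa(z)$. Combining this with the LCB1 argument above, and using independence of $E_{l+1}(T)$ to prevent two such maximal elements from lying one above the other, I would conclude that the maximal elements of $Q-P_l$ are exactly the stage-$(l+1)$ extending elements, so $P_{l+1}$ is a filter of $Q$ and a full ``rank'' has been completed. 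Carrying out this bookkeeping carefully---verifying that no unexpected element sits above or between the newly added ones---is the main obstacle.

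Finally I would conclude the two cases. In case (a), $L_b(P)\le1$ for all $b$, so $P$ is $\Gamma$-colored minuscule by Lemma \ref{LemExtensionFail}(a), with top tree $T$. If $Q$ is any $\Gamma$-colored minuscule poset with top tree $T$, then $P$ is a filter of $Q$ by the claim; were $P\neq Q$, a maximal element $z$ of $Q-P$ would make $P\cup\{z\}$ a $\Gamma$-colored $d$-complete filter, i.e.\ an extension of $P$ by $\kappa(z)$, forcing $L_{\kappa(z)}(P)=2$ by Lemma \ref{LemExtensionCondition} and contradicting the hypothesis of case (a); hence $Q\cong P$, proving uniqueness. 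In case (b), $L_b(P)>1$ for some $b$; since the process halted at $P$ but assessment condition (a) does not hold, it halted via condition (b) or (c), so Lemma \ref{LemExtensionFail}(b) or (c) shows $P$ is not a filter of any $\Gamma$-colored minuscule poset. But a $\Gamma$-colored minuscule poset $Q$ with top tree $T$ would contain $P$ as a filter by the claim, a contradiction; so no such $Q$ exists.
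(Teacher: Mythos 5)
Your proposal is correct in outline but reaches the crucial step---that the maximal rank complete poset $P$ is a filter of every $\Gamma$-colored minuscule poset $Q$ with top tree $T$---by a genuinely different route than the paper. The paper first establishes (via Stembridge's results) that finite $\Gamma$-colored $d$-complete posets are ranked, fixes a rank function on $T$, and shows inductively that the elements of $P_{l+1}-P_l$ all have rank $l+1$; it then argues that $Q$, being ranked, must be rebuilt from $T$ downwardly rank-by-rank through the color extensions, and that LCB1 forbids skipping any mandatory extension, so the construction of $P$ is an initial segment of the construction of $Q$. You instead induct directly on the statement ``$P_l$ is a filter of $Q$,'' using LCB1 to force $Q-P_l$ to contain an element of each color of $E_{l+1}(T)$, and Lemmas \ref{LemExtensionUnique} and \ref{LemExtensionCondition} to pin down the maximal elements of $Q-P_l$. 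This dispenses with the rank function entirely, at the cost of a more delicate inductive step.

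That inductive step is exactly the ``bookkeeping'' you flag, and your diagnosis of it is slightly off: the worry that two maximal elements of $Q-P_l$ might lie one above the other is vacuous, since maximal elements of an ideal are automatically incomparable. The real content is showing that for each $b\in E_{l+1}(T)$ the topmost element $u$ of color $b$ in $Q-P_l$ is genuinely maximal in $Q-P_l$, i.e.\ that $Q$ cannot postpone a mandatory color-$b$ extension to a lower level (this is precisely what the paper's rank function rules out). It can be closed without ranks: $u$ and the minimal element $y_b$ of $(P_l)_b$ are consecutive of color $b$ in $Q$; since $P_l$ is a filter, AC places all of $L(y_b,P_l)$ inside $(u,y_b)$, so its census $L_b(P_l)=2$ exhausts the ICE2 budget and $(u,y_b)$ contains no element of $Q-P_l$ with color adjacent to $b$. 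But if $u$ were not maximal in $Q-P_l$, a minimal element $w$ of $\{w'\in Q-P_l : w'>u\}$ would cover $u$ (anything strictly between would pull $w$ into the filter $P_l$), and by NA and AC this $w$ would be exactly such a forbidden element. With that supplied, your argument, including the deductions of (a) and (b) from Lemmas \ref{LemExtensionCondition} and \ref{LemExtensionFail}, is complete and matches the paper's conclusions.
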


%: Updated the first paragraph to give more detail why $\Gamma$-colored $d$-complete posets are ranked.

%: 21, 22: Defined the rank function I'm using.

%: Update: Changed the last sentence of the first paragraph to ``reasons that will become clear in the proof,''

\begin{proof}
It follows from \cite[Prop. 3.1(b) and Cor. 3.4]{Ste} that finite $\Gamma$-colored posets satisfying S1, S2, and S4 are ranked.  By Theorem \ref{Thm9Str3}, this shows that all finite $\Gamma$-colored $d$-complete posets are ranked.  Define the unique maximal element of $T$ to have rank $-i$, and choose the rank function such that if $x \to y$ in $T$ then the rank of $x$ is one \textit{greater} than the rank of $y$.  For example, the rank of the splitting element $s$ is $-1$.  This particular rank function is chosen for notational reasons that will be convenient for the proof, and it uniquely determines the rank of any element in a connected finite $\Gamma$-colored $d$-complete poset with top tree $T$.

%We use the fact that finite $\Gamma$-colored $d$-complete posets are ranked; see \cite[Cor. 3.4]{Ste}.

%: 21, 22: Deleted "the splitting element," since I now introduce $s$ in the first paragraph.

%: 21, 22: Added detail to explain that the extending element has rank 1.

%: Update: Removed the word "Specifically" for brevity and adjusted verbs in sentence accordingly.

The poset $P_0 = T$ never terminates at the assessment step by Lemma \ref{LemExtensionSeedUnique}.  Moreover, by that lemma we have $E_1(T) = \{c\}$, where $c$ is the color of $s$.  Thus the extension step consists of a single extension by $c$ to produce $P_1$.  Lemma \ref{LemExtensionUnique} shows the extension $P_1$ is unique, with the extending element in $P_1 - P_0$ covered precisely by the elements below $s$ with colors adjacent to $c$.  By NA, these are the two elements of rank 0 covered by $s$.  Hence the extending element in $P_1 - P_0$ has rank $1$ in $P_1$.
%Moreover, the extending element in $P_1 - P_0$ exists on a single rank of $P_1$.

%: 21, 22: Changed "exists on a single rank" to "have rank $l$ in $P_l$," and other similar changes.

%: Changed $c$ to $a$ near the end of the paragraph since $c$ was already used for the color of $s$.

%: 21, 22: Reordered the paragraph to build the extension first before I talk about ranks.  This includes a slight rewording to make exposition more clear.

%: 21, 22: Rewrote the end of the argument that elements in $P_{l+1} - P_l$ each have rank $l+1$.

%: Changed "in the construction of $P_l$" to "that produced $P_l$"

%: Removed "that the ability to extend by any other color in $E_{l+1}(T)$ is also not affected. Thus"

Now let $l > 0$ and suppose that there is a unique $\Gamma$-colored $d$-complete poset $P_l$ produced by this process, and that the elements in $P_l - P_{l-1}$ have rank $l$ in $P_l$.  Suppose the process does not terminate at the assessment step for $P_l$ and define the color extension set $E_{l+1}(T)$.  The colors in $E_{l+1}(T)$ are pairwise distant since the process did not terminate via condition (c) at the assessment step for $P_l$.  Hence extending $P_l$ by some $b \in E_{l+1}(T)$ does not affect the lower frontier census for any other $a \in E_{l+1}(T)$.  This implies by Lemma \ref{LemExtensionCondition} that the extension step may proceed for $P_l$, and in any order.  Lemma \ref{LemExtensionUniquePt2} implies that the resulting poset $P_{l+1}$ is the unique $\Gamma$-colored $d$-complete poset produced by the extension step applied to $P_l$.
%the ability to extend or the covering relations in an extension of $P_l$ by any other $\tcr{a} \in E_{l+1}(T)$.  Thus the extension step may proceed for $P_l$ by Lemma \ref{LemExtensionCondition}, and in any order.  Lemma \ref{LemExtensionUniquePt2} shows that the resulting poset $P_{l+1}$ is the unique $\Gamma$-colored $d$-complete poset produced by the extension step applied to $P_l$.}  
No color in $E_{l+1}(T)$ was in $E_l(T)$, since $L_b(P_l) = 0$ for all $b \in E_l(T)$.  
%\tcg{Let $a \in E_{l+1}(T)$.  Then $L_a(P_l) = 2$ by definition, but since $E_{l+1}(T) \cap E_l(T) = \emptyset$, we have $L_a(P_{l-1}) < 2$.  Hence there is a least one element in $P_l - P_{l-1}$ with color adjacent to $a$.  Since the elements of $P_l - P_{l-1}$ are minimal in $P_l$, we see by Lemma \ref{LemExtensionUnique} that the extending element of color $a$ in $P_{l+1} - P_l$ is covered by at least one element in $P_l - P_{l-1}$.  Since the latter elements have rank $l$, this shows that the elements in $P_{l+1} - P_l$ each have rank $l+1$ in $P_{l+1}$.}
So colors appeared in $E_{l+1}(T)$ as a result of the extension step that produced $P_l$.  Hence for every color in $E_{l+1}(T)$, there is at least one element in $P_l - P_{l-1}$ with an adjacent color.  Since the elements of $P_l - P_{l-1}$ are minimal in $P_l$, this shows by Lemma \ref{LemExtensionUnique} that each element in $P_{l+1} - P_l$ is covered by at least one element in $P_l - P_{l-1}$.  Since the latter elements have rank $l$, the elements in $P_{l+1} - P_l$ each have rank $l + 1$ in $P_{l+1}$.

%Since the elements of $P_l - P_{l-1}$ \tcr{each have rank $l$} and are minimal in $P_l$, this shows by Lemma \ref{LemExtensionUnique} that \tcr{elements in $P_{l+1} - P_l$ each have rank $l+1$.}

Thus iterating this process produces unique rank complete $\Gamma$-colored $d$-complete posets until it terminates.  Therefore the maximal rank complete $\Gamma$-colored $d$-complete poset $P$ with top tree $T$ is unique.

%: 24: Changed "it can be" to "it follows that $Q$ can be" n the third sentence.

%: 22, 26: Changed "next lowest rank" to "next rank" in middle of paragraph.

%: 22, 26: Reworded sentence to say "In other words, we may not proceed with extending by elements of the next rank until all possible extensions are completed with elements of the current rank; no extensions may be skipped."

%: Updated reference to reflect internal numbering in this paper.

Now suppose $Q$ is a $\Gamma$-colored minuscule poset with top tree $T$.  By Theorem \ref{ThmProp13Str3} the poset $Q$ is finite.  Since $T$ is a filter of $Q$ and $Q$ is ranked, it follows that $Q$ can be reconstructed from $T$ by filling in elements downwardly rank-by-rank.  Since every filter of $Q$ must be $\Gamma$-colored $d$-complete, any such rank-by-rank extension of $T$ to $Q$ must proceed via the color extensions of this section.  Lemma \ref{LemExtensionCondition} shows extension opportunities for a given color arise precisely when the lower frontier census of that color equals two.  
%No extension opportunity may be skipped when producing $Q$ this way since it must satisfy LCB1. 
Since $Q$ satisfies LCB1, we see every possible extension must occur to successfully produce $Q$.  In other words, we may not proceed with extending by elements of the next rank until all possible extensions are completed with elements of the current rank; no extensions may be skipped.
Thus the process used to produce the unique maximal rank complete $\Gamma$-colored $d$-complete poset $P$ is the beginning of this downward rank-by-rank extension process to produce $Q$ from $T$, and so $P$ is a filter of $Q$.  This shows $P$ must be isomorphic to a filter of every $\Gamma$-colored minuscule poset with top tree $T$. 

%: 27: Rearranged the second sentence to make wording less awkward and flow more naturally.

Now we consider how the extension process terminated to produce $P$.  
If $L_b(P) \le 1$ for every $b \in \Gamma$, then Lemma \ref{LemExtensionFail}(a) shows that $P$ is $\Gamma$-colored minuscule and Lemma \ref{LemExtensionCondition} shows that $P$ cannot be extended by any color.
%If $L_b(P) \le 1$ for every $b \in \Gamma$, then $P$ is $\Gamma$-colored minuscule by Lemma \ref{LemExtensionFail}(a) and cannot be extended by any color by Lemma \ref{LemExtensionCondition}.  
This proves (a).  Now suppose that $L_b(P) > 1$ for some $b \in \Gamma$, so that the process terminated at the assessment step condition (b) or (c).  Then Lemma \ref{LemExtensionFail}(b) or (c), respectively, shows that $P$ is not a filter of any $\Gamma$-colored minuscule poset.  This proves (b).
\end{proof}

%: New remark to establish that we have essentially just produced the longest Weyl group element.

% \begin{remark}\label{RemLongestElement}
% \tcb{The above proof shows that a connected finite $\Gamma$-colored minuscule poset $P$ is not a subposet of any other connected finite $\Gamma$-colored $d$-complete poset.  Once our classifications are in hand from Section \ref{SectionMainResults}, we use this remark in Section \ref{SectionHistory} to produce the longest Weyl group elements in Lie types admitting a minuscule representation of a semisimple Lie algebra.  \tcr{Confirm.}}
% \end{remark}

%We note by \cite[Cor. 3.4]{Ste} that every $\Gamma$-colored $d$-complete poset is ranked, so we extend downward rank-by-rank.  Given a finite ranked poset $P$ and a filter $T$ of $P$, one may always reconstruct $P$ from $T$ by adding in missing elements downwardly rank-by-rank, so this process will recover all possible $\Gamma$-colored minuscule posets with top tree $T$.

\section{Finite \texorpdfstring{$\Gamma$}{Gamma}-colored minuscule posets with top tree \texorpdfstring{$T = Y(i;j,k)$}{T=Y(i;j,k)}}\label{SectionSimplyLaced}

Theorem \ref{ThmSimpLacedSlantIrred} is our main result in this section, where we classify the slant irreducible $\Gamma$-colored minuscule posets for which $\Gamma$ is simply laced and contains more than one color.  We use this result with Proposition \ref{PropChainorSlantIrreducible} and Theorems \ref{ThmMultLacedClassify} and \ref{ThmChainandSimplyLaced} to handle the connected finite $\Gamma$-colored minuscule case in Theorem \ref{ThmMinusculeClassify}.

%: Changed "Propositions 7.1 through 7.4" to "Proposition 7.1" and "Propositions 7.5 through 7.7" to "Proposition 7.2".

%: Update: Changed "proposition" to "result" (since I changed Prop 6.8 to Thm 6.8).  Removed the second instance (after "Part (b)").

Our main tool for obtaining Theorem \ref{ThmSimpLacedSlantIrred} is Theorem \ref{ThmAlgorithmExtension}.  Part (a) of that result will be used in Proposition \ref{PropNewSL} to produce connected finite $\Gamma$-colored minuscule posets.  Part (b) will be used in Proposition \ref{PropNewSLFail} to rule out all other cases.

%: Condensed the former Propositions 7.1--7.4 into one proposition.  Condensed the figures for these propositions into one figure.

%: Added "separately for $k$ even or odd" to part (c).

%: Changed "T=Y(i;j,k)" to just T.

\begin{proposition}\label{PropNewSL}
Fix integers $i \ge 1$ and $k \ge j \ge 1$ and let $T$ be the unique $\Gamma$-colored $d$-complete poset of shape $Y(i;j,k)$ colored by simply laced $\Gamma$.  Then there exists a unique connected finite $\Gamma$-colored minuscule poset with top tree $T$ in each of the following cases:
\begin{enumerate}[(a),nosep]
    \item When $i = 1$ and $k \ge j \ge 1$, as displayed in Figure \ref{FigMinSL}(a),
    \item When $i > 1$ and $k = j = 1$, as displayed in Figure \ref{FigMinSL}(b),
    \item When $i = 2$ and $j = 1$ and $k > 1$, as displayed in Figure \ref{FigMinSL}(c) separately for even $k$ and odd $k$,
    \item When $i = 3$ and $j = 1$ and $k = 2$, as displayed in Figure \ref{FigMinSL}(d), and
    \item When $i = 4$ and $j = 1$ and $k = 2$, as displayed in Figure \ref{FigMinSL}(e).
\end{enumerate}
\end{proposition}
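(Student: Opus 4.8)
The plan is to deduce Proposition~\ref{PropNewSL} from Theorem~\ref{ThmAlgorithmExtension}. For the $\Gamma$-colored $d$-complete poset $T = Y(i;j,k)$ of each of the five cases, that theorem supplies a unique maximal rank complete $\Gamma$-colored $d$-complete poset $P$ with top tree $T$, and it tells us that $P$ is the unique $\Gamma$-colored minuscule poset with top tree $T$ precisely when $L_b(P) \le 1$ for every $b \in \Gamma$, and that otherwise no $\Gamma$-colored minuscule poset has top tree $T$. So it suffices, in each case, to exhibit a single connected finite $\Gamma$-colored minuscule poset $Q$ whose top tree is $Y(i;j,k)$: the mere existence of $Q$ rules out part (b) of Theorem~\ref{ThmAlgorithmExtension}, which puts us in part (a), and then $P$ — hence also $Q \cong P$ — is the unique $\Gamma$-colored minuscule poset with top tree $T$. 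Identifying $Q$ with the poset drawn in the corresponding part of Figure~\ref{FigMinSL} finishes the argument.

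I would obtain such a $Q$ by running the downward extension process of Section~\ref{SectionExtending} with seed $P_0 = T$. By Lemma~\ref{LemExtensionSeedUnique}, the first color extension set is $E_1(T) = \{\kappa(s)\}$, so the first stage consists of a single extension by the color of the splitting element $s$; Lemmas~\ref{LemExtensionUnique}, \ref{LemExtensionUniquePt2}, and~\ref{LemExtensionCondition} then govern each subsequent stage and guarantee it is uniquely determined. Carrying this out case by case recovers the familiar simply laced colored minuscule posets of Proctor \cite{BLPP}: in case (a), where $s$ is the maximum of $T$ and $\Gamma$ is a path, a rectangular ``Grassmannian'' poset of type $A$; in cases (b)--(e), minuscule posets of types $D_{i+2}$, $D_{k+3}$ (whose shape depends on the parity of $k$), $E_6$, and $E_7$, respectively. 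For each I would check that the process halts at assessment-step condition~(a), or, equivalently via Lemma~\ref{LemExtensionFail}(a), that the minimal element of every color class is covered by at most one element of adjacent color; Theorem~\ref{ThmAlgorithmExtension}(a) then applies. In practice, for the larger cases it is less laborious to simply write down the poset of Figure~\ref{FigMinSL} and verify directly that it satisfies EC, NA, AC, ICE2, UCB1, and LCB1 and has top tree exactly $Y(i;j,k)$; the two routes are interchangeable.

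The hard part is the bookkeeping for the $D_{k+3}$ family, with its even/odd split, and for types $E_6$ and $E_7$: one must confirm that the exhibited poset has top tree precisely $Y(i;j,k)$ (and not something strictly larger), that ICE2 holds at every pair of consecutive equal-colored elements introduced in the filled-in ranks, and that the frontier census conditions UCB1 and LCB1 hold throughout. The colorings shown in Figure~\ref{FigMinSL} need no separate verification: by Proposition~\ref{PropTisomtoGamma} the coloring of the top tree is forced, and then the uniqueness half of Theorem~\ref{ThmAlgorithmExtension} forces the coloring of the remaining elements, so the figures are merely convenient representatives. One could instead cross-check these posets against Proctor's classification of $d$-complete posets \cite{DDCT}, but in keeping with the policy of Section~\ref{SectionSLTopTrees} I would avoid relying on it.
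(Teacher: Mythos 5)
Your proposal is correct and follows essentially the same route as the paper: both arguments exhibit the posets of Figure \ref{FigMinSL}, verify they are $\Gamma$-colored minuscule with the specified top trees (either by running the downward extension process or by checking the axioms directly), and then invoke Theorem \ref{ThmAlgorithmExtension}(a) for uniqueness. Your observation that merely exhibiting one $\Gamma$-colored minuscule poset with top tree $T$ already forces case (a) of that theorem is a small logical streamlining, but the verification burden and the overall structure are the same as in the paper's proof.
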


%: Update: Rewrote to emphasize that the downward extension process is used to get the poset.

\begin{proof}
Each poset displayed in Figure \ref{FigMinSL} can be obtained from the downward extension process of Section \ref{SectionExtending} starting with the specified top tree $T = Y(i;j,k)$.
It is easily verified that each is $\Gamma$-colored minuscule.  Theorem \ref{ThmAlgorithmExtension}(a) shows they are the unique $\Gamma$-colored minuscule posets with their respective top trees.
\end{proof}

\begin{figure}[h !]
    \centering
        \begin{tikzpicture}
            \node at (0,0){\begin{tikzpicture}[scale=.475]
        \node (A) at (0,-3){{\tiny $a$}};
        \node (B) at (1,-3){{\tiny $b$}};
        \node (1) at (2,-3){};
        \node (D) at (3,-3){{\tiny $d$}};
        \node (E) at (4,-3){{\tiny $e$}};
        \node (F) at (5,-3){{\tiny $f$}};
        \node (2) at (6,-3){};
        \node (G) at (7,-3){{\tiny $g$}};
        \node (H) at (8,-3){{\tiny $h$}};
        \node (I) at (9,-3){{\tiny $i$}};
        \node (3) at (10,-3){};
        \node (L) at (11,-3){{\tiny $l$}};
        \node (M) at (12,-3){{\tiny $m$}};
        
        %\node at (6,-4.3){(a)};
        
        \draw (A) -- (B) (D) -- (E) -- (F) (G) -- (H) -- (I) (L) -- (M);
        \draw[dashed] (B) -- (1) -- (D) (F) -- (2) -- (G) (I) -- (3) -- (L);
        
        \node[draw] (E1) at (4,10.5){{\tiny $e$}};
        \node[draw] (F1) at (5,9.5){{\tiny $f$}};
        \node (21) at (6,8.5){};
        \node (G1) at (7,7.5){};
        \node (H1) at (8,6.5){};
        \node (I1) at (9,5.5){};
        \node (31) at (10,4.5){};
        \node[draw] (L1) at (11,3.5){{\tiny $l$}};
        \node[draw] (M1) at (12,2.5){{\tiny $m$}};
        
        \node[draw] (D1) at (3,9.5){{\tiny $d$}};
        \node[draw] (E2) at (4,8.5){{\tiny $e$}};
        \node (F2) at (5,7.5){};
        \node (22) at (6,6.5){};
        \node (G2) at (7,5.5){};
        \node (H2) at (8,4.5){};
        \node (I2) at (9,3.5){};
        \node (32) at (10,2.5){};
        \node[draw] (L2) at (11,1.5){{\tiny $l$}};
        
        \node (11) at (2,8.5){};
        \node (D2) at (3,7.5){};
        \node (E3) at (4,6.5){};
        \node (F3) at (5,5.5){};
        \node (23) at (6,4.5){};
        \node (G3) at (7,3.5){};
        \node (H3) at (8,2.5){};
        \node (I3) at (9,1.5){};
        \node (33) at (10,0.5){};
        
        \node[draw] (B1) at (1,7.5){{\tiny $b$}};
        \node (12) at (2,6.5){};
        \node (D3) at (3,5.5){};
        \node (E4) at (4,4.5){};
        \node (F4) at (5,3.5){};
        \node (24) at (6,2.5){};
        \node (G4) at (7,1.5){};
        \node[draw] (H4) at (8,0.5){{\tiny $h$}};
        \node[draw] (I4) at (9,-0.5){{\tiny $i$}};
        
        \node[draw] (A1) at (0,6.5){{\tiny $a$}};
        \node[draw] (B2) at (1,5.5){{\tiny $b$}};
        \node (13) at (2,4.5){};
        \node (D4) at (3,3.5){};
        \node (E5) at (4,2.5){};
        \node (F5) at (5,1.5){};
        \node (25) at (6,0.5){};
        \node[draw] (G5) at (7,-0.5){{\tiny $g$}};
        \node[draw] (H5) at (8,-1.5){{\tiny $h$}};

        \node (9) at (2,8.5){};
        \node (9) at (2,8.5){};
        
        \draw [
    decoration={
        brace, mirror
    },
    decorate
] (0,-3.3) -- (3,-3.3);
\draw [
    decoration={
        brace, mirror
    },
    decorate
] (5,-3.3) -- (12,-3.3);
\node at (1.5,-3.85){{\footnotesize $j$ nodes}};
\node at (8.5,-3.85){{\footnotesize $k$ nodes}};

    \draw (E1) -- (F1) -- (E2) -- (D1) -- (E1) (B1) -- (A1) -- (B2) (G5) -- (H5) -- (I4) -- (H4) -- (G5) (L1) -- (M1) -- (L2);
    
    \draw[dashed] (F1) -- (21) -- (G1) -- (H1) -- (I1) -- (31) -- (L1) (E2) -- (F2) -- (22) -- (G2) -- (H2) -- (I2) -- (32) -- (L2) (11) -- (D2) -- (E3) -- (F3) -- (23) -- (G3) -- (H3) -- (I3) -- (33) (B1) -- (12) -- (D3) -- (E4) -- (F4) -- (24) -- (G4) -- (H4) (B2) -- (13) -- (D4) -- (E5) -- (F5) -- (25) -- (G5);
    \draw[dashed] (D1) -- (11) -- (B1) (E2) -- (D2) -- (12) -- (B2) (21) -- (F2) -- (E3) -- (D3) -- (13) (G1) -- (22) -- (F3) -- (E4) -- (D4) (H1) -- (G2) -- (23) -- (F4) -- (E5) (I1) -- (H2) -- (G3) -- (24) -- (F5) (31) -- (I2) -- (H3) -- (G4) -- (25) (L1) -- (32) -- (I3) -- (H4) (L2) -- (33) -- (I4);
    \end{tikzpicture}};
    
    \node at (6,0){\begin{tikzpicture}[scale=.475]
        \node (A) at (0,0){{\tiny $a$}};
        \node (B) at (1,0){{\tiny $b$}};
        \node (1) at (2,0){};
        \node (D) at (3,0){{\tiny $d$}};
        \node (E) at (4,0){{\tiny $e$}};
        \node (F) at (5,0.5){{\tiny $f$}};
        \node (G) at (5,-0.5){{\tiny $g$}};
        %\node (1) at (1,0){};
        %\node (C) at (2,0){{\tiny $c$}};
        %\node (D) at (3,0.5){{\tiny $d$}};
        %\node (E) at (3,-0.5){{\tiny $e$}};
        
        %\draw [dashed] (A) -- (1) -- (C);
        %\draw (E) -- (C) -- (D);
        \draw (A) -- (B) (D) -- (E) (F) -- (E) -- (G);
        \draw [dashed] (B) -- (1) -- (D);
        
        \node[draw] (A1) at (0,12){{\tiny $a$}};
        \node[draw] (B1) at (1,11){{\tiny $b$}};
        \node (11) at (2,10){};
        \node[draw] (D1) at (3,9){{\tiny $d$}};
        \node[draw] (E1) at (4,8){{\tiny $e$}};
        \node[draw] (F1) at (3,7){{\tiny $f$}};
        \node[draw] (G1) at (5,7){{\tiny $g$}};
        \node[draw] (E2) at (4,6){{\tiny $e$}};
        \node[draw] (D2) at (3,5){{\tiny $d$}};
        \node (12) at (2,4){};
        \node[draw] (B2) at (1,3){{\tiny $b$}};
        \node[draw] (A2) at (0,2){{\tiny $a$}};
        %\node[draw] (A1) at (0,8){{\tiny $a$}};
        %\node (11) at (1,7){};
        %\node[draw] (C1) at (2,6){{\tiny $c$}};
        %\node[draw] (D1) at (1,5){{\tiny $d$}};
        %\node[draw] (E1) at (3,5){{\tiny $e$}};
        %\node[draw] (C2) at (2,4){{\tiny $c$}};
        %\node (12) at (1,3){};
        %\node[draw] (A2) at (0,2){{\tiny $a$}};
        \draw [dashed] (B1) -- (11) -- (D1) (D2) -- (12) -- (B2);
        \draw (A1) -- (B1) (D1) -- (E1) -- (F1) -- (E2) -- (D2) (E1) -- (G1) -- (E2) (B2) -- (A2);
        
        %\draw [dashed] (A1) -- (11) -- (C1) (C2) -- (12) -- (A2);
        %\draw (C1) -- (E1) -- (C2) -- (D1) -- (C1);
        \draw [
    decoration={
        brace, mirror
    },
    decorate
] (0,-0.3) -- (4,-0.3);
        \node at (2,-0.85){{\footnotesize $i$ nodes}};
    \end{tikzpicture}};
    
    \node at (0,-8){\begin{tikzpicture}[scale=.475]
        \node[draw] (Z) at (0,0){{\tiny $a$}};
        \node[draw] (Y) at (1,-1){{\tiny $b$}};
        \node[draw] (X) at (2,-2){{\tiny $d$}};
        \node[draw] (W) at (4,-4){{\tiny $f$}};
        \node[draw] (V) at (5,-5){{\tiny $g$}};
        
        \node[draw] (U) at (0,-2){{\tiny $c$}};
        \node[draw] (T) at (1,-3){{\tiny $b$}};
        \node[draw] (S) at (4,-6){{\tiny $f$}};
        
        \node[draw] (R) at (1,-7){{\tiny $b$}};
        \node[draw] (Q) at (2,-8){{\tiny $d$}};
        
        \node[draw] (P) at (0,-8){{\tiny $a$}};
        \node[draw] (O) at (1,-9){{\tiny $b$}};
        \node[draw] (N) at (0,-10){{\tiny $c$}};
        
        \node (1) at (3,-3){};
        \node (2) at (2,-4){};
        \node (3) at (3,-5){};
        \node (4) at (0,-4){};
        \node (5) at (1,-5){};
        \node (6) at (2,-6){};
        \node (7) at (3,-7){};
        \node (8) at (0,-6){};
        
        \draw (Z) -- (Y) -- (X) -- (T) -- (U) -- (Y);
        \draw (W) -- (V) -- (S);
        \draw (N) -- (O) -- (Q) -- (R) -- (P) -- (O);
        \draw[dashed] (X) -- (1) -- (W);
        \draw[dashed] (T) -- (2) -- (3) -- (S);
        \draw[dashed] (4) -- (5) -- (6) -- (7);
        \draw[dashed] (8) -- (R);
        \draw[dashed] (T) -- (4);
        \draw[dashed] (1) -- (2) -- (5) -- (8);
        \draw[dashed] (W) -- (3) -- (6) -- (R);
        \draw[dashed] (S) -- (7) -- (Q);
        
        \node (A) at (0,-11.5){{\tiny $a$}};
        \node (B) at (1,-12){{\tiny $b$}};
        \node (C) at (0,-12.5){{\tiny $c$}};
        \node (D) at (2,-12){{\tiny $d$}};
        \node (F) at (4,-12){{\tiny $f$}};
        \node (G) at (5,-12){{\tiny $g$}};
        \node (9) at (3,-12){};
        
        %\node (AT) at (0,-11.4){};
        %\node (BT) at (1,-11.4){};
        %\node (AB) at (0,-11.6){};
        %\node (BB) at (1,-11.6){};
        
        \draw (B) -- (D) (G) -- (F) (A) -- (B) -- (C);
        \draw[dashed] (D) -- (9) -- (F);
        
        \draw [
    decoration={
        brace, mirror
    },
    decorate
] (2,-12.3) -- (5,-12.3);
        \node at (3.5,-12.85){{\footnotesize $k$ nodes (even)}};
        
        %\draw[edge] (BT) to (AT);
        %\draw[edge] (AB) to (BB);
        
        %\node at (.5,-11.15){2};
        %\node at (.5,-11.85){1};
        
        %\node at (2.5,-13.65){(c)};
    \end{tikzpicture}};
    
    \node at (6,-8){\begin{tikzpicture}[scale=.475]
        \node[draw] (Z) at (0,0){{\tiny $a$}};
        \node[draw] (Y) at (1,-1){{\tiny $b$}};
        \node[draw] (X) at (2,-2){{\tiny $d$}};
        \node[draw] (W) at (4,-4){{\tiny $f$}};
        \node[draw] (V) at (5,-5){{\tiny $g$}};
        
        \node[draw] (U) at (0,-2){{\tiny $c$}};
        \node[draw] (T) at (1,-3){{\tiny $b$}};
        \node[draw] (S) at (4,-6){{\tiny $f$}};
        
        \node[draw] (R) at (1,-7){{\tiny $b$}};
        \node[draw] (Q) at (2,-8){{\tiny $d$}};
        
        \node[draw] (P) at (0,-8){{\tiny $c$}};
        \node[draw] (O) at (1,-9){{\tiny $b$}};
        \node[draw] (N) at (0,-10){{\tiny $a$}};
        
        \node (1) at (3,-3){};
        \node (2) at (2,-4){};
        \node (3) at (3,-5){};
        \node (4) at (0,-4){};
        \node (5) at (1,-5){};
        \node (6) at (2,-6){};
        \node (7) at (3,-7){};
        \node (8) at (0,-6){};
        
        \draw (Z) -- (Y) -- (X) -- (T) -- (U) -- (Y);
        \draw (W) -- (V) -- (S);
        \draw (N) -- (O) -- (Q) -- (R) -- (P) -- (O);
        \draw[dashed] (X) -- (1) -- (W);
        \draw[dashed] (T) -- (2) -- (3) -- (S);
        \draw[dashed] (4) -- (5) -- (6) -- (7);
        \draw[dashed] (8) -- (R);
        \draw[dashed] (T) -- (4);
        \draw[dashed] (1) -- (2) -- (5) -- (8);
        \draw[dashed] (W) -- (3) -- (6) -- (R);
        \draw[dashed] (S) -- (7) -- (Q);
        
        \node (A) at (0,-11.5){{\tiny $a$}};
        \node (B) at (1,-12){{\tiny $b$}};
        \node (C) at (0,-12.5){{\tiny $c$}};
        \node (D) at (2,-12){{\tiny $d$}};
        \node (F) at (4,-12){{\tiny $f$}};
        \node (G) at (5,-12){{\tiny $g$}};
        \node (9) at (3,-12){};
        
        %\node (AT) at (0,-11.4){};
        %\node (BT) at (1,-11.4){};
        %\node (AB) at (0,-11.6){};
        %\node (BB) at (1,-11.6){};
        
        \draw (B) -- (D) (G) -- (F) (A) -- (B) -- (C);
        \draw[dashed] (D) -- (9) -- (F);
        %\draw[edge] (BT) to (AT);
        %\draw[edge] (AB) to (BB);
        
        \draw [
    decoration={
        brace, mirror
    },
    decorate
] (2,-12.3) -- (5,-12.3);
        \node at (3.5,-12.85){{\footnotesize $k$ nodes (odd)}};
        
        %\draw[edge] (BT) to (AT);
        %\draw[edge] (AB) to (BB);
        
        %\node at (.5,-11.15){2};
        %\node at (.5,-11.85){1};
        
        %\node at (2.5,-13.65){(b)};
        
        %\node at (.5,-11.15){2};
        %\node at (.5,-11.85){1};
    \end{tikzpicture}};
    
    \node at (11,0.75){\begin{tikzpicture}[scale=0.475]        
        \node[draw] (AA1) at (-11,17){{\tiny $a$}};
        \node[draw] (BB1) at (-10,16){{\tiny $b$}};
        \node[draw] (CC1) at (-9,15){{\tiny $c$}};
        \node[draw] (DD1) at (-8,14){{\tiny $d$}};
        \node[draw] (EE1) at (-7,13){{\tiny $e$}};
        \node[draw] (FF1) at (-10,14){{\tiny $f$}};
        \node[draw] (CC2) at (-9,13){{\tiny $c$}};
        \node[draw] (DD2) at (-8,12){{\tiny $d$}};
        \node[draw] (BB2) at (-10,12){{\tiny $b$}};
        \node[draw] (AA2) at (-11,11){{\tiny $a$}};
        \node[draw] (CC3) at (-9,11){{\tiny $c$}};
        \node[draw] (BB3) at (-10,10){{\tiny $b$}};
        \node[draw] (FF2) at (-8,10){{\tiny $f$}};
        \node[draw] (CC4) at (-9,9){{\tiny $c$}};
        \node[draw] (DD3) at (-8,8){{\tiny $d$}};
        \node[draw] (EE2) at (-7,7){{\tiny $e$}};
        
        \draw (AA1) -- (BB1) -- (CC1) -- (DD1) -- (EE1) (FF1) -- (CC2) -- (DD2) (BB2) -- (CC3) -- (FF2) (AA2) -- (BB3) -- (CC4) -- (DD3) -- (EE2);
        
        \draw (CC1) -- (FF1) (DD1) -- (CC2) -- (BB2) -- (AA2) (EE1) -- (DD2) -- (CC3) -- (BB3) (FF2) -- (CC4) (FF2) -- (CC4);
        
        \node (AA) at (-11,5.5){{\tiny $a$}};
        \node (BB) at (-10,5.5){{\tiny $b$}};
        \node (CC) at (-9,5.5){{\tiny $c$}};
        \node (DD) at (-8,5.5){{\tiny $d$}};
        \node (EE) at (-7,5.5){{\tiny $e$}};
        \node (FF) at (-9,6.5){{\tiny $f$}};
        
        \draw (AA) -- (BB) -- (CC) -- (DD) -- (EE) (CC) -- (FF); 
        
        %\node at (-9,4.5){(a)};
    \end{tikzpicture}};
    
    \node at (11,-7.25){\begin{tikzpicture}[scale=0.475]
        \node[draw] (A1) at (0,20){{\tiny $a$}};
        \node[draw] (B1) at (1,19){{\tiny $b$}};
        \node[draw] (C1) at (2,18){{\tiny $c$}};
        \node[draw] (D1) at (3,17){{\tiny $d$}};
        \node[draw] (E1) at (4,16){{\tiny $e$}};
        \node[draw] (F1) at (5,15){{\tiny $f$}};
        \node[draw] (G1) at (2,16){{\tiny $g$}};
        \node[draw] (D2) at (3,15){{\tiny $d$}};
        \node[draw] (E2) at (4,14){{\tiny $e$}};
        \node[draw] (C2) at (2,14){{\tiny $c$}};
        \node[draw] (B2) at (1,13){{\tiny $b$}};
        \node[draw] (A2) at (0,12){{\tiny $a$}};
        \node[draw] (D3) at (3,13){{\tiny $d$}};
        \node[draw] (C3) at (2,12){{\tiny $c$}};
        \node[draw] (B3) at (1,11){{\tiny $b$}};
        \node[draw] (G2) at (4,12){{\tiny $g$}};
        \node[draw] (D4) at (3,11){{\tiny $d$}};
        \node[draw] (C4) at (2,10){{\tiny $c$}};
        \node[draw] (E3) at (4,10){{\tiny $e$}};
        \node[draw] (F2) at (5,9){{\tiny $f$}};
        \node[draw] (D5) at (3,9){{\tiny $d$}};
        \node[draw] (E4) at (4,8){{\tiny $e$}};
        \node[draw] (G3) at (2,8){{\tiny $g$}};
        \node[draw] (D6) at (3,7){{\tiny $d$}};
        \node[draw] (C5) at (2,6){{\tiny $c$}};
        \node[draw] (B4) at (1,5){{\tiny $b$}};
        \node[draw] (A3) at (0,4){{\tiny $a$}};
        
        \draw (A1) -- (B1) -- (C1) -- (D1) -- (E1) -- (F1) (G1) -- (D2) -- (E2) (C2) -- (D3) -- (G2) (B2) -- (C3) -- (D4) -- (E3) -- (F2) (A2) -- (B3) -- (C4) -- (D5) -- (E4) (G3) -- (D6);
        \draw (D1) -- (G1) (E1) -- (D2) -- (C2) -- (B2) -- (A2) (F1) -- (E2) -- (D3) -- (C3) -- (B3) (G2) -- (D4) -- (C4) (E3) -- (D5) -- (G3) (F2) -- (E4) -- (D6) -- (C5) -- (B4) -- (A3);
        
        \node (A) at (0,2.5){{\tiny $a$}};
        \node (B) at (1,2.5){{\tiny $b$}};
        \node (C) at (2,2.5){{\tiny $c$}};
        \node (D) at (3,2.5){{\tiny $d$}};
        \node (E) at (4,2.5){{\tiny $e$}};
        \node (F) at (5,2.5){{\tiny $f$}};
        \node (G) at (3,3.5){{\tiny $g$}};
        
        \draw (A) -- (B) -- (C) -- (D) -- (E) -- (F) (D) -- (G);
    \end{tikzpicture}};
    
    \node at (0,-3.75){(a)};
    \node at (6,-3.5){(b)};
    \node at (3,-11.5){(c)};
    \node at (11,-2.5){(d)};
    \node at (11,-11.9){(e)};
        \end{tikzpicture}
    \caption{The $\Gamma$-colored minuscule posets of Proposition \ref{PropNewSL} with top trees $T = Y(i;j,k)$ for $i \ge 1$ and $k \ge j \ge 1$.  (a) Vertically aligned elements have the same color.  The color $h$ is the $k + 1^{\text{st}}$ node from the left in $\Gamma$.  If $j = k$, then $e = h$.  (b) Vertically aligned elements have the same color, except for the unique element of color $f$.  (c) There are two possibilities depending on whether $k$ is even or odd.  Vertically aligned elements have the same color except for the leftmost elements, which alternate between colors $a$ and $c$.}
    \label{FigMinSL}
\end{figure}
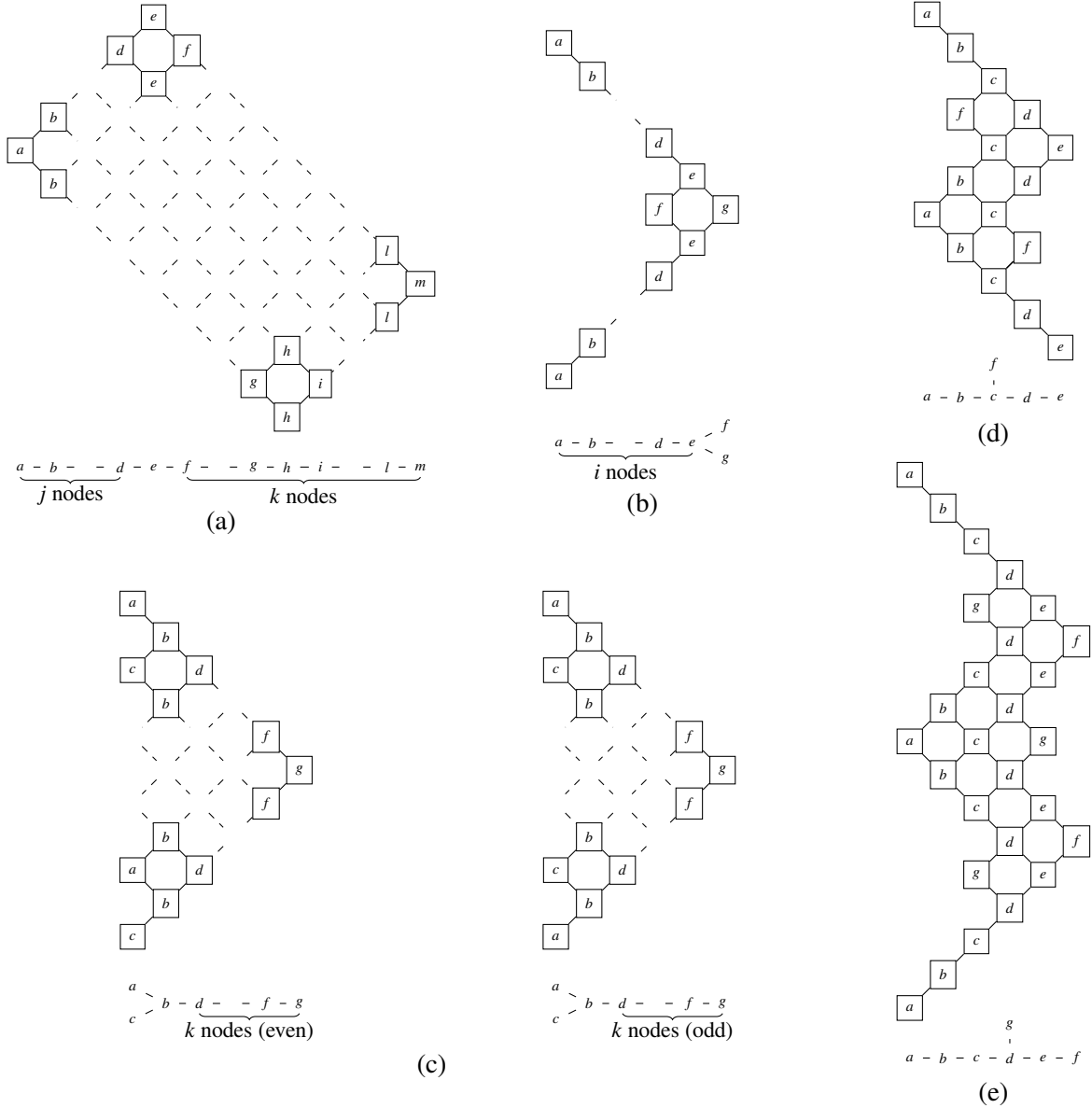

Having obtained connected finite $\Gamma$-colored minuscule posets in the previous result, we now examine three cases in which no $\Gamma$-colored minuscule posets exist.

%: New Proposition to combine old Propositions 7.5 through 7.7.

%: Update: Changed language to match phrasing of Prop. 7.1.

\begin{proposition}\label{PropNewSLFail}
Fix integers $i \ge 1$ and $k \ge j \ge 1$ and let $T$ be the unique $\Gamma$-colored $d$-complete poset of shape $Y(i;j,k)$ colored by simply laced $\Gamma$.  Then there are no $\Gamma$-colored minuscule posets with top tree $T$ in each of the following cases:
\begin{enumerate}[(a),nosep]
    \item When $i > 1$ and $k \ge j > 1$,
    \item When $i > 2$ and $j = 1$ and $k > 2$, and
    \item When $i > 4$ and $j = 1$ and $k = 2$.
\end{enumerate}
\end{proposition}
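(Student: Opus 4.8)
The plan is to invoke the downward extension process of Section~\ref{SectionExtending}: by Theorem~\ref{ThmAlgorithmExtension}(b) it suffices to show that, starting from $T=Y(i;j,k)$, the process terminates at an assessment step via condition (b) or (c), so that the maximal rank complete poset $P$ it produces satisfies $L_b(P)>1$ for some $b\in\Gamma$. In each case I would first fix notation for the top tree — write $s$ for the splitting element and $a:=\kappa(s)$, and record that the neighbors of $a$ in $\Gamma$ are exactly the color of the element covering $s$ in $T$ together with the colors of the two elements covered by $s$ (three distinct colors) — and then run the process stage by stage, tabulating the censuses $L_b(P_l)$ and the color extension sets $E_{l+1}(T)$. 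The computation is driven by Lemma~\ref{LemExtensionUnique}: an extension by a color $b$ attaches one new element of color $b$ below precisely the minimal elements of the lower frontier $L(y,P)$, where $y$ is the current minimal element of color $b$; and since $L(y,P)$ collects \emph{all} elements below $y$ whose colors are adjacent to $b$, once a new element of color $b$ is created below some element $z$, it enters the lower frontier of $z$ for every color adjacent to $b$. At each stage one must also check that $E_{l+1}(T)$ contains no pair of adjacent colors, so that condition (c) does not fire prematurely; this holds because the colors activated at a common stage turn out to be mutually distant in the tree $\Gamma$.

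For case (a) the failure is immediate. Since $i>1$ and both branches have length at least $2$, the first stage extends only by $a$ (Lemma~\ref{LemExtensionSeedUnique}), and in $P_1$ each of the three neighbors of $a$ has census $2$ while every other color has census $\le 1$; these three colors are pairwise distant, so the second stage extends by all of them, and by Lemma~\ref{LemExtensionUnique} each of the three resulting elements sits below the generation-$1$ element of color $a$. Hence $L_a(P_2)=3$ and the process terminates via condition (b) at $P_2$. For case (b) the left branch has length $1$, so the short-arm color lags one round; running four stages — extend by $a$; then the first stalk color and the first right-branch color; then $a$ again together with the second stalk and second right-branch colors; then the first stalk color, the first right-branch color, and now the short-arm color — one finds that in $P_4$ the three generation-$4$ elements, one of each color adjacent to $a$, all lie below the generation-$3$ element of color $a$, forcing $L_a(P_4)=3$. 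Here $i\ge 3$ and $k\ge 3$ are exactly what keep the stalk and right-branch cascades alive long enough for this synchronization, and no earlier stage triggers condition (c). Terminates via (b) at $P_4$.

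Case (c), where both outer arms are short (lengths $1$ and $2$) and the stalk is long ($i-1\ge 4$), is the real work and the main obstacle. Now $k=2$ makes the right-branch cascade wind down quickly, so the third color adjacent to $a$ is delivered only after the stalk cascade has run all the way up, reflected off the leaf color at the top, and come back down, interleaved with the periodic bounces on the two short arms; for $Y(5;1,2)$ one checks that the census of $a$ first reaches $3$ only at the eighth stage, when a fourth-generation element of color $a$ finally acquires one element of each neighboring color beneath it. The difficulty is to make this uniform in $i\ge 5$: one must argue that the return of the stalk cascade and the short-arm bounces are always eventually synchronized below a single element of color $a$ — equivalently, that the process never reaches the state ``all censuses $\le 1$'' of condition (a). I would handle this with a bookkeeping invariant recording, for each stage and each color, the multiset of colors of the elements covered by the current minimal element of that color, evolved by the propagation rule of Lemma~\ref{LemExtensionUnique}; once this is set up, the termination at condition (b) follows, as in (a) and (b), from the fact that $a$ has three distinct neighbors in $\Gamma$, and the no-premature-(c) check again reduces to the observation that simultaneously activated colors are mutually distant.
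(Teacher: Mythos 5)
Your overall strategy is exactly the paper's: run the downward extension process of Section~\ref{SectionExtending} starting from $T=Y(i;j,k)$, show that it terminates with some lower frontier census exceeding $2$, and invoke Theorem~\ref{ThmAlgorithmExtension}(b). Your computations for cases (a) and (b) agree with what the paper records in Figure~\ref{FigMinSLFail}(a),(b): in (a) the splitting color reaches census $3$ at the third assessment step, and in (b) the extension sets $\{c\}$, $\{b,d\}$, $\{a,c,e\}$, $\{b,g,d\}$ (in the paper's labeling, with splitting color $c$) drive the splitting color to census $3$ at the fifth assessment step. Those two cases are correct, including the checks that simultaneously activated colors are pairwise distant.

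The gap is case (c). You correctly name the termination point for $Y(5;1,2)$ --- census $3$ for the splitting color at the ninth assessment step, once the fourth-generation splitting-color element has acquired one element of each of its three neighboring colors beneath it --- but you neither carry out the eight-stage computation nor execute the ``bookkeeping invariant'' you propose for general $i>4$; as written, case (c) is a plan rather than a proof. Moreover, the uniformity difficulty you flag dissolves more simply than your synchronization framing suggests, and this is what the paper's Figure~\ref{FigMinSLFail}(c) implicitly records: the entire eight-stage cascade takes place among the bottom five stalk colors together with the three branch and arm colors, so the $Y(5;1,2)$ computation is verbatim the computation for every $i\ge 5$ restricted to those eight colors. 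The only contribution of a longer stalk is extra colors joining $E_6(T)$, $E_7(T)$, $E_8(T)$ (hence the $\supseteq$ in the figure); each such color is activated exactly one stage after the stalk color below it and is adjacent only to colors activated at the immediately preceding and following stages, so it is distant from everything else in its own extension set and cannot trigger premature termination via condition (c) of the assessment step, nor can it affect the census of the splitting color. Supplying either the explicit eight-stage table or this locality observation is what is needed to close case (c).
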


\begin{proof}
Following the extension process described in Section \ref{SectionExtending}, we produced a necessary portion of the unique maximal rank complete $\Gamma$-colored $d$-complete poset $P$ with the specified top tree $T$ for each case.  The results for (a), (b), and (c) are respectively displayed in Figure \ref{FigMinSLFail}(a), (b), and (c).  Color extension sets obtained in the process are also displayed.  These extension processes terminated respectively at the third, fifth, and ninth assessment steps.  For each $P$ produced, there is a color $q$ for which $L_q(P) = 3$.  Hence Theorem \ref{ThmAlgorithmExtension}(b) shows there is no $\Gamma$-colored minuscule poset with top tree $T$ in these three cases.
\end{proof}

% \begin{proof}
% \tcb{Figures \ref{FigMinSLFail}(a), (b), and (c) display a necessary portion of the unique maximal rank complete $\Gamma$-colored $d$-complete poset $P$ with the specified top tree for respective Parts (a), (b), and (c) of the statement of this proposition.  These posets were generated using the extension process of Section \ref{SectionExtending}.  Color extension sets (or subsets thereof) are also displayed in each figure.  In each case, there is a color $q$ for which $L_q(P) = 3$, and so Theorem \ref{ThmAlgorithmExtension}(b) shows there is no $\Gamma$-colored minuscule poset with these top trees.}
% \end{proof}

\begin{figure}
    \centering
    \begin{tikzpicture}
        \node at (0,0){\begin{tikzpicture}[scale=0.475]
        %\node (A) at (1,0.5){{\tiny $a$}};
        \node (1) at (2,1.5){};
        \node (C) at (3,1.5){{\tiny $a$}};
        \node (D) at (4,1){{\tiny $b$}};
        \node (E) at (3,0.5){{\tiny $e$}};
        \node (F) at (2,0.5){{\tiny $f$}};
        \node (2) at (1,0.5){};
        %\node (G) at (0,0.5){{\tiny $g$}};
        \node (L) at (5,1){{\tiny $c$}};
        \node (M) at (6,1){{\tiny $d$}};
        \node (3) at (7,1){};
        %\node (O) at (8,1){{\tiny $o$}};
        
        \draw[dotted,thick] (1) -- (C);
        \draw  (C) -- (D);
        \draw (D) -- (E) -- (F) (D) -- (L) -- (M);
        \draw[dotted,thick] (M) -- (3) (2) -- (F);
        
        %\node[draw] (G1) at (0,2){{\tiny $g$}};
        \node (21) at (1,3){};
        \node[draw] (F1) at (2,4){{\tiny $f$}};
        \node[draw] (E1) at (3,5){{\tiny $e$}};
        \node[draw] (D1) at (4,6){{\tiny $b$}};
        \node [draw] (C1) at (3,7){{\tiny $a$}};
        \node (11) at (2,8){};
        %\node[draw] (A1) at (1,9){{\tiny $a$}};
        \node [draw] (L1) at (5,5){{\tiny $c$}};
        \node [draw] (M1) at (6,4){{\tiny $d$}};
        \node (31) at (7,3){};
        %\node [draw] (O1) at (8,2){{\tiny $o$}};
        \node [draw,gray] (D2) at (4,4){{\tiny $b$}};
        \node [draw, gray] (E2) at (3,3){{\tiny $e$}};
        \node [draw, gray] (L2) at (5,3){{\tiny $c$}};
        \node [draw, gray] (C2) at (4,3){{\tiny $a$}};
        
        \draw[dashed] (11) -- (C1) (F1) -- (21) (M1) -- (31);
        \draw (C1) -- (D1) -- (L1) -- (M1) (D1) -- (E1) -- (F1);
        \draw[gray] (E1) -- (D2) -- (L1) (D2) -- (E2) (D2) -- (C2) (D2) -- (L2) (F1) -- (E2) (M1) -- (L2);
        
        \node at (9,4){{\tiny $E_1(T) = \{b\}$}};
        \node at (9,3){{\tiny $E_2(T) = \{e,a,c\}$}};
    \end{tikzpicture}};
    
    \node at (5,0){\begin{tikzpicture}[scale=0.475]
        %\node (A) at (-1,-0.5){{\tiny $a$}};
        \node (1) at (0,-0.5){};
        \node (C) at (1,-0.5){{\tiny $a$}};
        \node (D) at (2,-0.5){{\tiny $b$}};
        \node (E) at (3,-0.5){{\tiny $c$}};
        \node (F) at (4,-0.5){{\tiny $d$}};
        \node (G) at (5,-0.5){{\tiny $e$}};
        \node (H) at (6,-0.5){{\tiny $f$}};
        \node (2) at (7,-0.5){};
        %\node (L) at (8,-0.5){{\tiny $l$}};
        \node (M) at (3,0.5){{\tiny $g$}};
        
        \draw[dotted,thick] (1) -- (C) (H) -- (2);
        \draw (C) -- (D) -- (E) -- (F) -- (G) -- (H) (E) -- (M);
        
        %\node[draw] (A1) at (-1,11){{\tiny $a$}};
        \node (11) at (0,10){};
        \node[draw] (C1) at (1,9){{\tiny $a$}};
        \node[draw] (D1) at (2,8){{\tiny $b$}};
        \node[draw] (E1) at (3,7){{\tiny $c$}};
        \node[draw] (F1) at (4,6){{\tiny $d$}};
        \node[draw] (G1) at (5,5){{\tiny $e$}};
        \node[draw] (H1) at (6,4){{\tiny $f$}};
        \node (21) at (7,3){};
        %\node[draw] (L1) at (8,2){{\tiny $l$}};
        \node[draw] (M1) at (2,6){{\tiny $g$}};
        \node[draw,opacity=.5] (E2) at (3,5){{\tiny $c$}};
        \node[draw,opacity=.5] (F2) at (4,4){{\tiny $d$}};
        \node[draw,opacity=.5] (G2) at (5,3){{\tiny $e$}};
        \node[draw,opacity=.5] (D2) at (2,4){{\tiny $b$}};
        \node (12) at (0,2){};
        \node[draw,opacity=.5] (C2) at (1,3){{\tiny $a$}};
        %\node[draw] (A2) at (-1,1){{\tiny $a$}};
        \node[draw,opacity=.5] (E3) at (3,3){{\tiny $c$}};
        \node[draw,opacity=.5] (D3) at (2,2){{\tiny $b$}};
        \node[draw,opacity=.5] (F3) at (4,2){{\tiny $d$}};
        \node[draw,opacity=.5] (M2) at (3,2){{\tiny $g$}};
        \node (H2) at (6,2){};
        
        \draw[dashed] (11) -- (C1);
        \draw[dashed, opacity=.5] (12) -- (C2);
        \draw[dashed] (H1) -- (21);
        \draw[dashed,opacity=.5] (G2) -- (H2) -- (21);
        \draw (C1) -- (D1) -- (E1) -- (M1) (E1) -- (F1) -- (G1) -- (H1);
        \draw[opacity=.5] (M1) -- (E2) -- (F1) (E2) -- (F2) -- (G1) (F2) -- (G2) -- (H1) (E2) -- (D2) -- (E3) -- (F2) (D2) -- (C2) -- (D3) -- (E3) -- (M2) (E3) -- (F3) -- (G2);
        
        \node at (9,5){{\tiny $E_1(T) = \{c\}$}};
        \node at (9,4){{\tiny $E_2(T) = \{b,d\}$}};
        \node at (9,3){{\tiny $E_3(T) \supseteq \{a,c,e\}$}};
        \node at (9,2){{\tiny $E_4(T) \supseteq \{b,g,d\}$}};

    \end{tikzpicture}};
    
    \node at (10.5,0){\begin{tikzpicture}[scale=.475]
        %\node[draw] (A1) at (0,20){{\tiny $a$}};
        \node (11) at (1,19){};
        \node[draw] (C1) at (2,18){{\tiny $a$}};
        \node[draw] (D1) at (3,17){{\tiny $b$}};
        \node[draw] (E1) at (4,16){{\tiny $c$}};
        \node[draw] (F1) at (5,15){{\tiny $d$}};
        \node[draw] (G1) at (6,14){{\tiny $e$}};
        \node[draw] (H1) at (7,13){{\tiny $f$}};
        \node[draw] (L1) at (8,12){{\tiny $g$}};
        \node[draw] (M1) at (5,13){{\tiny $h$}};
        
        \draw [dashed] (11) -- (C1);
        \draw (C1) -- (D1) -- (E1) -- (F1) -- (G1) -- (H1) -- (L1) (G1) -- (M1);
        
        \node[draw,gray] (G2) at (6,12){{\tiny $e$}};
        \node[draw,gray] (H2) at (7,11){{\tiny $f$}};
        \node[draw,gray] (F2) at (5,11){{\tiny $d$}};
        \node[draw,gray] (E2) at (4,10){{\tiny $c$}};
        \node[draw,gray] (D2) at (3,9){{\tiny $b$}};
        \node[draw,gray] (C2) at (2,8){{\tiny $a$}};
        \node[gray] (12) at (1,7){};
        %\node[gray] (A2) at (0,6){};
        \node[draw,gray] (G3) at (6,10){{\tiny $e$}};
        \node[draw,gray] (M2) at (7,9){{\tiny $h$}};
        \node[draw,gray] (F3) at (5,9){{\tiny $d$}};
        \node[draw,gray] (E3) at (4,8){{\tiny $c$}};
        \node[draw,gray] (G4) at (6,8){{\tiny $e$}};
        \node[draw,gray] (D3) at (3,7){{\tiny $b$}};
        \node[draw,gray] (F4) at (5,7){{\tiny $d$}};
        \node[draw,gray] (H3) at (7,7){{\tiny $f$}};
        \node[draw,gray] (L2) at (8,6){{\tiny $g$}};
        \node[draw,gray] (E4) at (4,6){{\tiny $c$}};
        \node[draw,gray] (F5) at (5,5){{\tiny $d$}};
        \node[draw,gray] (G5) at (6,6){{\tiny $e$}};
        \node[draw,gray] (H4) at (7,5){{\tiny $f$}};
        \node[draw,gray] (M3) at (6,5){{\tiny $h$}};
        \node[gray] (C3) at (2,6){};
        \node[gray] (D4) at (3,5){};
        %\node[gray] (E5) at (4,4){};
        \node[gray] (13) at (1,5){};
        
        \draw[gray] (M1) -- (G2) -- (H2) (F2) -- (G3) -- (M2) (E2) -- (F3) -- (G4) -- (H3) -- (L2) (D2) -- (E3) -- (F4) -- (G5) -- (H4) (C2) -- (D3) -- (E4) -- (F5);
        \draw[gray] (H1) -- (G2) -- (F2) -- (E2) -- (D2) -- (C2) (L1) -- (H2) -- (G3) -- (F3) -- (E3) -- (D3) (M2) -- (G4) -- (F4) -- (E4) (H3) -- (G5) -- (F5) (L2) -- (H4) (G5) -- (M3);
        \draw[gray,dashed] (C2) -- (12) (D3) -- (C3) (E4) -- (D4);
        \draw[gray,dashed] (12) -- (C3) -- (D4);
        
        %\node (A) at (0,2.5){{\tiny $a$}};
        \node (1) at (1,2.5){};
        \node (C) at (2,2.5){{\tiny $a$}};
        \node (D) at (3,2.5){{\tiny $b$}};
        \node (E) at (4,2.5){{\tiny $c$}};
        \node (F) at (5,2.5){{\tiny $d$}};
        \node (G) at (6,2.5){{\tiny $e$}};
        \node (H) at (7,2.5){{\tiny $f$}};
        \node (L) at (8,2.5){{\tiny $g$}};
        \node (M) at (6,3.5){{\tiny $h$}};
        
        \draw[dotted,thick] (1) -- (C);
        \draw (C) -- (D) -- (E) -- (F) -- (G) -- (H) -- (L) (G) -- (M);
        
        \node at (11,12){{\tiny $E_1(T) = \{e\}$}};
        \node at (11,11){{\tiny $E_2(T) = \{d,f\}$}};
        \node at (11,10){{\tiny $E_3(T) = \{c,e\}$}};
        \node at (11,9){{\tiny $E_4(T) = \{b,d,h\}$}};
        \node at (11,8){{\tiny $E_5(T) = \{a,c,e\}$}};
        \node at (11,7){{\tiny $E_6(T) \supseteq \{b,d,f\}$}};
        \node at (11,6){{\tiny $E_7(T) \supseteq \{c,e,g\}$}};
        \node at (11,5){{\tiny $E_8(T) \supseteq \{d,h,f\}$}};
    \end{tikzpicture}};
    
    \node at (0,-2){(a)};
    \node at (5,-3){(b)};
    \node at (11,-4.5){(c)};
    \end{tikzpicture} 
    \caption{Necessary portions of the unique maximal rank complete $\Gamma$-colored $d$-complete posets $P$ of Proposition \ref{PropNewSLFail} with top trees $T = Y(i;j,k)$ for $i \ge 1$ and $k \ge j \ge 1$.  Gray elements and edges are not part of the top trees.  Color extension sets (or subsets thereof) are written in line with extending elements of the corresponding colors and ranks.  (a) We have $L_b(P) = 3$.  (b) We have $L_c(P) = 3$.  (c) We have $L_e(P) = 3$.}
    \label{FigMinSLFail}
\end{figure}
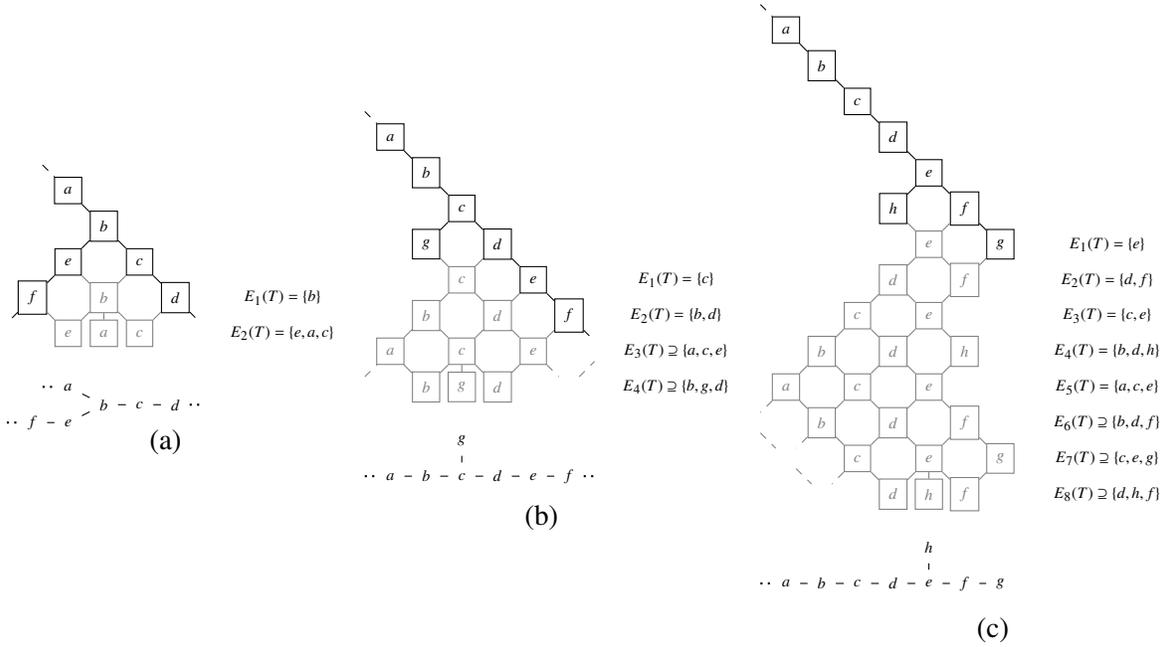

We can now classify the slant irreducible $\Gamma$-colored minuscule posets when $\Gamma$ is simply laced and contains more than one color.  To state this classification concisely, we first name the types of $\Gamma$-colored posets appearing in it.

%: Updated definitions to reflect the new figures.  Also added "is" to both (b) and (c).

\begin{definition}
Let $P$ be a finite $\Gamma$-colored poset.
\begin{enumerate}[(a),nosep]
    \item If $P$ is isomorphic to a poset of the form displayed in Figure \ref{FigMinSL}(a), then $P$ has \emph{type $A$ exterior}.
    \item If $P$ is isomorphic to a poset of the form displayed in Figure \ref{FigMinSL}(b), then $P$ has \emph{type $D$ standard}.
    \item If $P$ is isomorphic to a poset of the form displayed in Figure \ref{FigMinSL}(c), then $P$ has \emph{type $D$ spin}.
    \item If $P$ is isomorphic to the poset displayed in Figure \ref{FigMinSL}(d), then $P$ has \emph{type $E_6$}.
    \item If $P$ is isomorphic to the poset displayed in Figure \ref{FigMinSL}(e), then $P$ has \emph{type $E_7$}.
\end{enumerate}
\end{definition}

%: Deleted the original sentence because it is not needed now (given the restructuring of Prop. 7.1).  Added a new transition sentence.

\noindent Now we obtain the main result of this section.

%\noindent Each of these types of posets is $\Gamma$-colored minuscule.

\begin{theorem}\label{ThmSimpLacedSlantIrred}
Let $P$ be a connected finite $\Gamma$-colored minuscule poset and assume $\Gamma$ is simply laced and contains more than one color.  If $P$ is slant irreducible as a $\Gamma$-colored $d$-complete poset, then $P$ has type A exterior, type $D$ standard, type $D$ spin, type $E_6$, or type $E_7$.
\end{theorem}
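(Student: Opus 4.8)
The plan is to invoke Corollary \ref{CorTHasShapeY} to reduce to the case where the top tree $T$ has shape $Y(i;j,k)$ for some $i \ge 1$ and $k \ge j \ge 1$, and then use Theorem \ref{ThmAlgorithmExtension} to channel the entire problem through the downward extension process. Since $P$ is connected, finite, and $\Gamma$-colored minuscule with $|\Gamma| > 1$, Remark \ref{RemMoreThanOneColor} gives $|T| > 1$, so Corollary \ref{CorTHasShapeY} applies and we may write $T = Y(i;j,k)$. Because filters of $\Gamma$-colored minuscule posets are $\Gamma$-colored $d$-complete, $T$ is itself $\Gamma$-colored $d$-complete, and by Proposition \ref{PropSLIsom} (see Remark \ref{RemDoubleDip}) it is the unique such poset of its shape colored by simply laced $\Gamma$. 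Now Theorem \ref{ThmAlgorithmExtension} says there is a \emph{unique} maximal rank complete $\Gamma$-colored $d$-complete poset with top tree $T$, call it $P_0$, and $P$ must be isomorphic to a filter of it; moreover $P$ is the unique $\Gamma$-colored minuscule poset with top tree $T$ when $L_b(P_0) \le 1$ for all $b$, and no $\Gamma$-colored minuscule poset with top tree $T$ exists otherwise.

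\textbf{The case split.} The parameter triple $(i;j,k)$ with $i \ge 1$ and $k \ge j \ge 1$ falls into exactly the following mutually exclusive and exhaustive families: (1) $i = 1$, $k \ge j \ge 1$; (2) $i > 1$, $k = j = 1$; (3) $i = 2$, $j = 1$, $k > 1$; (4) $i = 3$, $j = 1$, $k = 2$; (5) $i = 4$, $j = 1$, $k = 2$; (6) $i > 1$, $k \ge j > 1$; (7) $i > 2$, $j = 1$, $k > 2$; (8) $i > 4$, $j = 1$, $k = 2$. Families (1)--(5) are precisely the hypotheses of Proposition \ref{PropNewSL}(a)--(e), each of which produces a unique connected finite $\Gamma$-colored minuscule poset with top tree $T$, displayed in Figure \ref{FigMinSL}(a)--(e); by the definitions immediately preceding this theorem, these are exactly the posets of type $A$ exterior, type $D$ standard, type $D$ spin, type $E_6$, and type $E_7$, respectively. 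Families (6)--(8) are precisely the hypotheses of Proposition \ref{PropNewSLFail}(a)--(c), each of which yields \emph{no} $\Gamma$-colored minuscule poset with top tree $T$. Since $P$ exists and is $\Gamma$-colored minuscule with top tree $T = Y(i;j,k)$, the triple $(i;j,k)$ cannot lie in families (6)--(8); hence it lies in one of (1)--(5), and by the uniqueness clause of Proposition \ref{PropNewSL} (equivalently, Theorem \ref{ThmAlgorithmExtension}(a)), $P$ is isomorphic to the corresponding displayed poset, i.e. $P$ has one of the five named types.

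\textbf{Verification that the families cover all cases.} The one routine check is that families (1)--(8) genuinely partition $\{(i,j,k) : i \ge 1,\ k \ge j \ge 1\}$. If $i = 1$ we are in (1). If $i > 1$ and $j > 1$ we are in (6). If $i > 1$ and $j = 1$: when $k = 1$ we are in (2); when $k > 1$, the subcase $i = 2$ is (3), the subcase $i = 3$ splits into $k = 2$ giving (4) and $k > 2$ giving (7), the subcase $i = 4$ splits into $k = 2$ giving (5) and $k > 2$ giving (7), and the subcase $i > 4$ splits into $k = 2$ giving (8) and $k > 2$ giving (7). Every triple is accounted for exactly once.

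\textbf{Main obstacle.} The conceptual content of this theorem is essentially packaged already: the hard combinatorial work lives in Proposition \ref{PropNewSL} and Proposition \ref{PropNewSLFail} (and behind those, in the extension machinery of Section \ref{SectionExtending} and in Lemma \ref{LemExtensionFail} and Theorem \ref{ThmAlgorithmExtension}). So at this stage the only thing that could go wrong is a gap in the case analysis — namely, exhibiting a parameter triple not covered by any of the five "success" families of Proposition \ref{PropNewSL} nor by the three "failure" families of Proposition \ref{PropNewSLFail}. The proof therefore hinges on confirming the exhaustiveness of this eight-way split, which is the bookkeeping carried out in the preceding paragraph; once that is in hand, the theorem follows immediately from Corollary \ref{CorTHasShapeY}, Proposition \ref{PropNewSL}, and Proposition \ref{PropNewSLFail}.
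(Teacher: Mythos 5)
Your proposal is correct and follows essentially the same route as the paper: reduce to $T = Y(i;j,k)$ via Corollary \ref{CorTHasShapeY}, then apply Propositions \ref{PropNewSL} and \ref{PropNewSLFail} to the exhaustive case split on $(i,j,k)$ (the paper organizes the split as a sequence of nested assumptions rather than an explicit eight-family partition, but the content is identical). The explicit exhaustiveness check you include is the only step the paper leaves implicit, and it is carried out correctly.
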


\begin{proof}
Since $\Gamma$ contains more than one color, so does the top tree $T$.  Hence Corollary \ref{CorTHasShapeY} shows $T$ has shape $Y(i;j,k)$ for some integers $i \ge 1$ and $k \ge j \ge 1$.

%: Changed Proposition numbers to reflect condensed results in this section.

%: 29: Added "that" to third sentence.

If $i = 1$, then Proposition \ref{PropNewSL}(a) shows $P$ has type $A$ exterior.  Now assume $i > 1$.  Proposition \ref{PropNewSLFail}(a) shows that $j = 1$.  If $k = 1$, then Proposition \ref{PropNewSL}(b) shows $P$ has type $D$ standard.  Now assume $k > 1$.  If $i = 2$, then Proposition \ref{PropNewSL}(c) shows $P$ has type $D$ spin.  Now assume $i > 2$.  Proposition \ref{PropNewSLFail}(b) shows $k = 2$ and Proposition \ref{PropNewSLFail}(c) shows $i = 3$ or $i = 4$.  Proposition \ref{PropNewSL}(d) (respectively \ref{PropNewSL}(e)) shows that $P$ has type $E_6$ (respectively $E_7$) in this case.
\end{proof}

%: Changed section name from ``Classification of $\Gamma$-colored minuscule posets'' to ``Main classification results''

\section{Main classification results}\label{SectionMainResults}

%: I added a sentence to the end of the first paragraph to support the extension of Section 8, which now includes the classification of $P$-minuscule representations.

Our main classification result is presented in Theorem \ref{ThmMinusculeClassify}, where we classify all $\Gamma$-colored minuscule posets.  This result is aided by Proposition 25 of \cite{dC-class}, which shows how a $\Gamma$-colored minuscule poset decomposes into a disjoint union of connected $\Gamma$-colored minuscule posets.  We then give an overview of representation definitions given in \cite{Unify} and apply \cite[Thm. 38(b)]{Unify} to obtain the classification of $P$-minuscule representations in Theorem \ref{ThmPMinusculeClassify}.

%: The paragraph before Theorem 8.1 previously had only one sentence dedicated to the finite case.  I deleted this sentence and wrote a new paragraph giving more detail to the finite case and pointing toward the new Section 9.  I then made the remaining content on the infinite case its own paragraph, altering the first sentence to fit this new structure and mirror the language given for the new material on the finite case.

%: Added one more part to a sentence (red) about finite Lie type.

%: Update: Rewrote much of this paragraph to point to section 9 and Proctor's coroots.

The connected finite $\Gamma$-colored minuscule posets appearing in Theorem \ref{ThmMinusculeClassify}(i) are Proctor's colored minuscule posets.  
After defining uncolored minuscule posets using weight diagrams of minuscule representations, he then identified them with certain posets of coroots in the associated finite Lie types and used this identification to produce their original coloring maps.  We recover these realizations in Section \ref{SectionHistory} in our setting using the classification of Theorem \ref{ThmMinusculeClassify}(i).
%\tcg{In Section \ref{SectionHistory} we present Proctor's original definitions and translate between our setting and his.  The latter realizes minuscule posets as posets of coroots in the associated finite Lie type to obtain the original coloring map.}
%In Section \ref{SectionHistory} we present Proctor's original definition of uncolored minuscule posets which proceeds from the list of minuscule weights.  We also describe Proctor's coloring from Theorem 11 of \cite{BLPP}, which relies on an alternate realization of minuscule posets as certain subposets of coroots for the associated simple Lie algebras.  We describe how to translate the connected finite $\Gamma$-colored minuscule posets appearing in Theorem \ref{ThmMinusculeClassify}(i) to the colored minuscule posets of Proctor.  
See Figure \ref{FigFiniteMinusculeHasseDiagrams} for the Hasse diagrams of all connected finite $\Gamma$-colored minuscule posets and their corresponding Dynkin diagrams of finite Lie type.

\begin{figure}[t!]
\centering
    \begin{tikzpicture}[scale=.965]
        \node at (-1,1.35){{\footnotesize Type $A$ standard}};
        \node at (-1,1){{\footnotesize $|\Gamma| > 0$}};
        
        \node at (5.75,2.35){{\footnotesize Type $A$ exterior}};
        \node at (5.75,2){{\footnotesize $|\Gamma| > 2$}};
        
        \node at (-1.25,-5){{\footnotesize Type $B$}};
        \node at (-1.25,-5.35){{\footnotesize $|\Gamma| > 1$}};
        
        \node at (2.75,-5){{\footnotesize Type $C$}};
        \node at (2.75,-5.35){{\footnotesize $|\Gamma| > 2$}};
        
        \node at (7.25,-5){{\footnotesize Type $D$ standard}};
        \node at (7.25,-5.35){{\footnotesize $|\Gamma| > 3$}};
        
        \node at (1.1,-12.3){{\footnotesize Type $D$ spin}};
        \node at (1.1,-12.65){{\footnotesize $|\Gamma| > 4$, odd}};
        
        \node at (6.1,-12.3){{\footnotesize Type $D$ spin}};
        \node at (6.1,-12.65){{\footnotesize $|\Gamma| > 4$, even}};
        
        \node at (11.5,.5){{\footnotesize Type $E_6$}};
        
        \node at (11.5,-8){{\footnotesize Type $E_7$}};
    
        \node at (-2,0){\begin{tikzpicture}[scale=.475]
        \node (A) at (0,0){{\tiny $a$}};
        \node (B) at (1,0){{\tiny $b$}};
        \node (1) at (2,0){};
        \node (D) at (3,0){{\tiny $d$}};
        \node (E) at (4,0){{\tiny $e$}};
        % \node at (2,-1){{\footnotesize Type $A$ standard}};
        % \node at (2,-1.75){{\footnotesize $|\Gamma| > 0$}};
        
        \draw (A) -- (B) (D) -- (E);
        \draw[dashed] (B) -- (1) -- (D);
        
        \node[draw] (Z) at (0,5.5){{\tiny $a$}};
        \node[draw] (Y) at (1,4.5){{\tiny $b$}};
        \node (2) at (2,3.5){};
        \node[draw] (X) at (3,2.5){{\tiny $d$}};
        \node[draw] (W) at (4,1.5){{\tiny $e$}};
        
        \draw (Z) -- (Y) (X) -- (W);
        \draw[dashed] (Y) -- (2) -- (X);
    \end{tikzpicture}};
    
    \node at (4,0){\begin{tikzpicture}[scale=.475]
        \node (A) at (0,-3){{\tiny $a$}};
        \node (B) at (1,-3){{\tiny $b$}};
        \node (1) at (2,-3){};
        \node (D) at (3,-3){{\tiny $d$}};
        \node (E) at (4,-3){{\tiny $e$}};
        \node (F) at (5,-3){{\tiny $f$}};
        \node (2) at (6,-3){};
        \node (G) at (7,-3){{\tiny $g$}};
        \node (H) at (8,-3){{\tiny $h$}};
        \node (I) at (9,-3){{\tiny $i$}};
        \node (3) at (10,-3){};
        \node (L) at (11,-3){{\tiny $l$}};
        \node (M) at (12,-3){{\tiny $m$}};
        % \node at (6,-4){{\footnotesize Type $A$ exterior}};
        % \node at (6,-4.75){{\footnotesize $|\Gamma| > 2$}};
        
        %\node at (6,-4.3){(a)};
        
        \draw (A) -- (B) (D) -- (E) -- (F) (G) -- (H) -- (I) (L) -- (M);
        \draw[dashed] (B) -- (1) -- (D) (F) -- (2) -- (G) (I) -- (3) -- (L);
        
        \node[draw] (E1) at (4,10.5){{\tiny $e$}};
        \node[draw] (F1) at (5,9.5){{\tiny $f$}};
        \node (21) at (6,8.5){};
        \node (G1) at (7,7.5){};
        \node (H1) at (8,6.5){};
        \node (I1) at (9,5.5){};
        \node (31) at (10,4.5){};
        \node[draw] (L1) at (11,3.5){{\tiny $l$}};
        \node[draw] (M1) at (12,2.5){{\tiny $m$}};
        
        \node[draw] (D1) at (3,9.5){{\tiny $d$}};
        \node[draw] (E2) at (4,8.5){{\tiny $e$}};
        \node (F2) at (5,7.5){};
        \node (22) at (6,6.5){};
        \node (G2) at (7,5.5){};
        \node (H2) at (8,4.5){};
        \node (I2) at (9,3.5){};
        \node (32) at (10,2.5){};
        \node[draw] (L2) at (11,1.5){{\tiny $l$}};
        
        \node (11) at (2,8.5){};
        \node (D2) at (3,7.5){};
        \node (E3) at (4,6.5){};
        \node (F3) at (5,5.5){};
        \node (23) at (6,4.5){};
        \node (G3) at (7,3.5){};
        \node (H3) at (8,2.5){};
        \node (I3) at (9,1.5){};
        \node (33) at (10,0.5){};
        
        \node[draw] (B1) at (1,7.5){{\tiny $b$}};
        \node (12) at (2,6.5){};
        \node (D3) at (3,5.5){};
        \node (E4) at (4,4.5){};
        \node (F4) at (5,3.5){};
        \node (24) at (6,2.5){};
        \node (G4) at (7,1.5){};
        \node[draw] (H4) at (8,0.5){{\tiny $h$}};
        \node[draw] (I4) at (9,-0.5){{\tiny $i$}};
        
        \node[draw] (A1) at (0,6.5){{\tiny $a$}};
        \node[draw] (B2) at (1,5.5){{\tiny $b$}};
        \node (13) at (2,4.5){};
        \node (D4) at (3,3.5){};
        \node (E5) at (4,2.5){};
        \node (F5) at (5,1.5){};
        \node (25) at (6,0.5){};
        \node[draw] (G5) at (7,-0.5){{\tiny $g$}};
        \node[draw] (H5) at (8,-1.5){{\tiny $h$}};

        \node (9) at (2,8.5){};
        \node (9) at (2,8.5){};
        
%         \draw [
%     decoration={
%         brace, mirror
%     },
%     decorate
% ] (0,-3.3) -- (3,-3.3);
% \draw [
%     decoration={
%         brace, mirror
%     },
%     decorate
% ] (5,-3.3) -- (12,-3.3);
% \node at (1.5,-3.65){{\tiny $j$ nodes}};
% \node at (8.5,-3.65){{\tiny $k$ nodes}};

    \draw (E1) -- (F1) -- (E2) -- (D1) -- (E1) (B1) -- (A1) -- (B2) (G5) -- (H5) -- (I4) -- (H4) -- (G5) (L1) -- (M1) -- (L2);
    
    \draw[dashed] (F1) -- (21) -- (G1) -- (H1) -- (I1) -- (31) -- (L1) (E2) -- (F2) -- (22) -- (G2) -- (H2) -- (I2) -- (32) -- (L2) (11) -- (D2) -- (E3) -- (F3) -- (23) -- (G3) -- (H3) -- (I3) -- (33) (B1) -- (12) -- (D3) -- (E4) -- (F4) -- (24) -- (G4) -- (H4) (B2) -- (13) -- (D4) -- (E5) -- (F5) -- (25) -- (G5);
    \draw[dashed] (D1) -- (11) -- (B1) (E2) -- (D2) -- (12) -- (B2) (21) -- (F2) -- (E3) -- (D3) -- (13) (G1) -- (22) -- (F3) -- (E4) -- (D4) (H1) -- (G2) -- (23) -- (F4) -- (E5) (I1) -- (H2) -- (G3) -- (24) -- (F5) (31) -- (I2) -- (H3) -- (G4) -- (25) (L1) -- (32) -- (I3) -- (H4) (L2) -- (33) -- (I4);
    \end{tikzpicture}};
    
    \node at (-2,-7.5){\begin{tikzpicture}[scale=.475]
    \tikzset{edge/.style = {->,> = latex'}}
        \node[draw] (Z) at (0,0){{\tiny $a$}};
        \node[draw] (Y) at (1,-1){{\tiny $b$}};
        \node[draw] (X) at (2,-2){{\tiny $c$}};
        \node[draw] (W) at (4,-4){{\tiny $e$}};
        \node[draw] (V) at (5,-5){{\tiny $f$}};
        
        \node[draw] (U) at (0,-2){{\tiny $a$}};
        \node[draw] (T) at (1,-3){{\tiny $b$}};
        \node[draw] (S) at (4,-6){{\tiny $e$}};
        
        \node[draw] (R) at (1,-7){{\tiny $b$}};
        \node[draw] (Q) at (2,-8){{\tiny $c$}};
        
        \node[draw] (P) at (0,-8){{\tiny $a$}};
        \node[draw] (O) at (1,-9){{\tiny $b$}};
        \node[draw] (N) at (0,-10){{\tiny $a$}};
        
        \node (1) at (3,-3){};
        \node (2) at (2,-4){};
        \node (3) at (3,-5){};
        \node (4) at (0,-4){};
        \node (5) at (1,-5){};
        \node (6) at (2,-6){};
        \node (7) at (3,-7){};
        \node (8) at (0,-6){};
        
        \draw (Z) -- (Y) -- (X) -- (T) -- (U) -- (Y);
        \draw (W) -- (V) -- (S);
        \draw (N) -- (O) -- (Q) -- (R) -- (P) -- (O);
        \draw[dashed] (X) -- (1) -- (W);
        \draw[dashed] (T) -- (2) -- (3) -- (S);
        \draw[dashed] (4) -- (5) -- (6) -- (7);
        \draw[dashed] (8) -- (R);
        \draw[dashed] (T) -- (4);
        \draw[dashed] (1) -- (2) -- (5) -- (8);
        \draw[dashed] (W) -- (3) -- (6) -- (R);
        \draw[dashed] (S) -- (7) -- (Q);
        
        \node (A) at (0,-11.5){{\tiny $a$}};
        \node (B) at (1,-11.5){{\tiny $b$}};
        \node (C) at (2,-11.5){{\tiny $c$}};
        \node (E) at (4,-11.5){{\tiny $e$}};
        \node (F) at (5,-11.5){{\tiny $f$}};
        \node (9) at (3,-11.5){};
        % \node at (2.5,-12.5){{\footnotesize Type $B$}};
        % \node at (2.5,-13.25){{\footnotesize $|\Gamma| > 1$}};
        
        \node (AT) at (0,-11.4){};
        \node (BT) at (1,-11.4){};
        \node (AB) at (0,-11.6){};
        \node (BB) at (1,-11.6){};
        
        \draw (B) -- (C) (E) -- (F);
        \draw[dashed] (C) -- (9) -- (E);
        \draw[edge] (BT) to (AT);
        \draw[edge] (AB) to (BB);
        
        \node at (.5,-11.1){{\tiny $2$}};
        \node at (.5,-11.9){{\tiny $1$}};
        
    \end{tikzpicture}};
    
    \node at (2.5,-7.5){\begin{tikzpicture}[scale=.475]
    \tikzset{edge/.style = {->,> = latex'}}
        \node[draw] (Z) at (0,0){{\tiny $a$}};
        \node[draw] (Y) at (1,-1){{\tiny $b$}}; 
        \node (3) at (2,-2){};
        \node[draw] (X) at (3,-3){{\tiny $d$}};
        \node[draw] (W) at (4,-4){{\tiny $e$}};
        \node[draw] (V) at (5,-5){{\tiny $f$}};
        % \node [draw, gray] (U) at (6,-6){{\tiny $g$}};
        \node (4) at (7,-7){};
        % \node [draw, gray] (T) at (8,-8){{\tiny $l$}};
        % \node [draw, gray] (S) at (9,-9){{\tiny $m$}};
        \node[draw] (R) at (4,-6){{\tiny $e$}};
        % \node [draw, gray] (Q) at (5,-7){{\tiny $f$}};
        \node (P) at (6,-8){};
        \node (5) at (7,-9){};
        % \node [draw, gray] (O) at (8,-10){{\tiny $l$}};
        \node[draw] (N) at (3,-7){{\tiny $d$}};
        \node (6) at (2,-8){};
        \node[draw] (M) at (1,-9){{\tiny $b$}};
        \node[draw] (L) at (0,-10){{\tiny $a$}};
        
        \node (A) at (0,-11.5){{\tiny $a$}};
        \node (B) at (1,-11.5){{\tiny $b$}};
        \node (1) at (2,-11.5){};
        \node (D) at (3,-11.5){{\tiny $d$}};
        \node (E) at (4,-11.5){{\tiny $e$}};
        \node (F) at (5,-11.5){{\tiny $f$}};
        % \node at (2.5,-12.5){{\footnotesize Type $C$}};
        % \node at (2.5,-13.25){{\footnotesize $|\Gamma| > 2$}};
        % \node [gray] (G) at (6,-11.5){{\tiny $g$}};
        % \node (2) at (7,-11.5){};
        % \node [gray] (I) at (8,-11.5){{\tiny $l$}};
        % \node [gray] (J) at (9,-11.5){{\tiny $m$}};
        
        \node (ET) at (4,-11.4){};
        \node (EB) at (4,-11.6){};
        \node (FT) at (5,-11.4){};
        \node (FB) at (5,-11.6){};
        
        \draw (Z) -- (Y) (X) -- (W) -- (V);
        % \draw [gray] (T) -- (S) -- (O);
        % \draw [gray] (Q) -- (U);
        % \draw [gray] (U)-- (V);
        \draw (V) -- (R);
        % \draw [gray] (R) -- (Q);
        \draw (R) -- (N) (M) -- (L);
        \draw[dashed] (Y) -- (3) -- (X);
        % \draw [gray,dashed] (U) -- (4) -- (T) (P) -- (5) -- (O);
        \draw [dashed] (N) -- (6) -- (M);
        \draw (A) -- (B) (D) -- (E);
        % \draw[gray] (F) -- (G) (I) -- (J);
        \draw[dashed] (B) -- (1) -- (D);
        % \draw[gray,dashed](G) -- (2) -- (I);
        % \draw[dashed,gray] (P) -- (4) (5) -- (T);
        % \draw [dashed, gray](P) -- (Q);
        
        \draw [edge] (FT) to (ET);
        \draw [edge] (EB) to (FB);
        
        \node at (4.5,-11.1){{\tiny 2}};
        \node at (4.5,-11.9){{\tiny 1}};
    \end{tikzpicture}};
    
    \node at (6,-7.5){\begin{tikzpicture}[scale=.475]
        \node (A) at (0,0){{\tiny $a$}};
        \node (B) at (1,0){{\tiny $b$}};
        \node (1) at (2,0){};
        \node (D) at (3,0){{\tiny $d$}};
        \node (E) at (4,0){{\tiny $e$}};
        \node (F) at (5,0.5){{\tiny $f$}};
        \node (G) at (5,-0.5){{\tiny $g$}};
        % \node at (2.5,-1){{\footnotesize Type $D$ standard}};
        % \node at (2.5,-1.75){{\footnotesize $|\Gamma| > 3$}};
        %\node (1) at (1,0){};
        %\node (C) at (2,0){{\tiny $c$}};
        %\node (D) at (3,0.5){{\tiny $d$}};
        %\node (E) at (3,-0.5){{\tiny $e$}};
        
        %\draw [dashed] (A) -- (1) -- (C);
        %\draw (E) -- (C) -- (D);
        \draw (A) -- (B) (D) -- (E) (F) -- (E) -- (G);
        \draw [dashed] (B) -- (1) -- (D);
        
        \node[draw] (A1) at (0,12){{\tiny $a$}};
        \node[draw] (B1) at (1,11){{\tiny $b$}};
        \node (11) at (2,10){};
        \node[draw] (D1) at (3,9){{\tiny $d$}};
        \node[draw] (E1) at (4,8){{\tiny $e$}};
        \node[draw] (F1) at (3,7){{\tiny $f$}};
        \node[draw] (G1) at (5,7){{\tiny $g$}};
        \node[draw] (E2) at (4,6){{\tiny $e$}};
        \node[draw] (D2) at (3,5){{\tiny $d$}};
        \node (12) at (2,4){};
        \node[draw] (B2) at (1,3){{\tiny $b$}};
        \node[draw] (A2) at (0,2){{\tiny $a$}};
        %\node[draw] (A1) at (0,8){{\tiny $a$}};
        %\node (11) at (1,7){};
        %\node[draw] (C1) at (2,6){{\tiny $c$}};
        %\node[draw] (D1) at (1,5){{\tiny $d$}};
        %\node[draw] (E1) at (3,5){{\tiny $e$}};
        %\node[draw] (C2) at (2,4){{\tiny $c$}};
        %\node (12) at (1,3){};
        %\node[draw] (A2) at (0,2){{\tiny $a$}};
        \draw [dashed] (B1) -- (11) -- (D1) (D2) -- (12) -- (B2);
        \draw (A1) -- (B1) (D1) -- (E1) -- (F1) -- (E2) -- (D2) (E1) -- (G1) -- (E2) (B2) -- (A2);
        
        %\draw [dashed] (A1) -- (11) -- (C1) (C2) -- (12) -- (A2);
        %\draw (C1) -- (E1) -- (C2) -- (D1) -- (C1);
%         \draw [
%     decoration={
%         brace, mirror
%     },
%     decorate
% ] (0,-0.3) -- (4,-0.3);
%         \node at (2,-0.65){{\tiny $i$ nodes}};
    \end{tikzpicture}};
    
    \node at (0,-15){\begin{tikzpicture}[scale=.475]
        \node[draw] (Z) at (0,0){{\tiny $a$}};
        \node[draw] (Y) at (1,-1){{\tiny $b$}};
        \node[draw] (X) at (2,-2){{\tiny $d$}};
        \node[draw] (W) at (4,-4){{\tiny $f$}};
        \node[draw] (V) at (5,-5){{\tiny $g$}};
        
        \node[draw] (U) at (0,-2){{\tiny $c$}};
        \node[draw] (T) at (1,-3){{\tiny $b$}};
        \node[draw] (S) at (4,-6){{\tiny $f$}};
        
        \node[draw] (R) at (1,-7){{\tiny $b$}};
        \node[draw] (Q) at (2,-8){{\tiny $d$}};
        
        \node[draw] (P) at (0,-8){{\tiny $a$}};
        \node[draw] (O) at (1,-9){{\tiny $b$}};
        \node[draw] (N) at (0,-10){{\tiny $c$}};
        
        \node (1) at (3,-3){};
        \node (2) at (2,-4){};
        \node (3) at (3,-5){};
        \node (4) at (0,-4){};
        \node (5) at (1,-5){};
        \node (6) at (2,-6){};
        \node (7) at (3,-7){};
        \node (8) at (0,-6){};
        
        \draw (Z) -- (Y) -- (X) -- (T) -- (U) -- (Y);
        \draw (W) -- (V) -- (S);
        \draw (N) -- (O) -- (Q) -- (R) -- (P) -- (O);
        \draw[dashed] (X) -- (1) -- (W);
        \draw[dashed] (T) -- (2) -- (3) -- (S);
        \draw[dashed] (4) -- (5) -- (6) -- (7);
        \draw[dashed] (8) -- (R);
        \draw[dashed] (T) -- (4);
        \draw[dashed] (1) -- (2) -- (5) -- (8);
        \draw[dashed] (W) -- (3) -- (6) -- (R);
        \draw[dashed] (S) -- (7) -- (Q);
        
        \node (A) at (0,-11.5){{\tiny $a$}};
        \node (B) at (1,-12){{\tiny $b$}};
        \node (C) at (0,-12.5){{\tiny $c$}};
        \node (D) at (2,-12){{\tiny $d$}};
        \node (F) at (4,-12){{\tiny $f$}};
        \node (G) at (5,-12){{\tiny $g$}};
        \node (9) at (3,-12){};
        % \node at (2.5,-13){{\footnotesize Type $D$ spin}};
        % \node at (2.5,-13.75){{\footnotesize $|\Gamma| > 3$, odd}};
        
        %\node (AT) at (0,-11.4){};
        %\node (BT) at (1,-11.4){};
        %\node (AB) at (0,-11.6){};
        %\node (BB) at (1,-11.6){};
        
        \draw (B) -- (D) (G) -- (F) (A) -- (B) -- (C);
        \draw[dashed] (D) -- (9) -- (F);
        
%         \draw [
%     decoration={
%         brace, mirror
%     },
%     decorate
% ] (2,-12.3) -- (5,-12.3);
%         \node at (3.5,-12.65){{\tiny $k$ nodes (even)}};
        
        %\draw[edge] (BT) to (AT);
        %\draw[edge] (AB) to (BB);
        
        %\node at (.5,-11.15){2};
        %\node at (.5,-11.85){1};
    \end{tikzpicture}};
    
    \node at (5,-15){\begin{tikzpicture}[scale=.475]
        \node[draw] (Z) at (0,0){{\tiny $a$}};
        \node[draw] (Y) at (1,-1){{\tiny $b$}};
        \node[draw] (X) at (2,-2){{\tiny $d$}};
        \node[draw] (W) at (4,-4){{\tiny $f$}};
        \node[draw] (V) at (5,-5){{\tiny $g$}};
        
        \node[draw] (U) at (0,-2){{\tiny $c$}};
        \node[draw] (T) at (1,-3){{\tiny $b$}};
        \node[draw] (S) at (4,-6){{\tiny $f$}};
        
        \node[draw] (R) at (1,-7){{\tiny $b$}};
        \node[draw] (Q) at (2,-8){{\tiny $d$}};
        
        \node[draw] (P) at (0,-8){{\tiny $c$}};
        \node[draw] (O) at (1,-9){{\tiny $b$}};
        \node[draw] (N) at (0,-10){{\tiny $a$}};
        
        \node (1) at (3,-3){};
        \node (2) at (2,-4){};
        \node (3) at (3,-5){};
        \node (4) at (0,-4){};
        \node (5) at (1,-5){};
        \node (6) at (2,-6){};
        \node (7) at (3,-7){};
        \node (8) at (0,-6){};
        
        \draw (Z) -- (Y) -- (X) -- (T) -- (U) -- (Y);
        \draw (W) -- (V) -- (S);
        \draw (N) -- (O) -- (Q) -- (R) -- (P) -- (O);
        \draw[dashed] (X) -- (1) -- (W);
        \draw[dashed] (T) -- (2) -- (3) -- (S);
        \draw[dashed] (4) -- (5) -- (6) -- (7);
        \draw[dashed] (8) -- (R);
        \draw[dashed] (T) -- (4);
        \draw[dashed] (1) -- (2) -- (5) -- (8);
        \draw[dashed] (W) -- (3) -- (6) -- (R);
        \draw[dashed] (S) -- (7) -- (Q);
        
        \node (A) at (0,-11.5){{\tiny $a$}};
        \node (B) at (1,-12){{\tiny $b$}};
        \node (C) at (0,-12.5){{\tiny $c$}};
        \node (D) at (2,-12){{\tiny $d$}};
        \node (F) at (4,-12){{\tiny $f$}};
        \node (G) at (5,-12){{\tiny $g$}};
        \node (9) at (3,-12){};
        % \node at (2.5,-13){{\footnotesize Type $D$ spin}};
        % \node at (2.5,-13.75){{\footnotesize $|\Gamma| > 3$, even}};
        
        %\node (AT) at (0,-11.4){};
        %\node (BT) at (1,-11.4){};
        %\node (AB) at (0,-11.6){};
        %\node (BB) at (1,-11.6){};
        
        \draw (B) -- (D) (G) -- (F) (A) -- (B) -- (C);
        \draw[dashed] (D) -- (9) -- (F);
        %\draw[edge] (BT) to (AT);
        %\draw[edge] (AB) to (BB);
        
%         \draw [
%     decoration={
%         brace, mirror
%     },
%     decorate
% ] (2,-12.3) -- (5,-12.3);
%         \node at (3.5,-12.65){{\tiny $k$ nodes (odd)}};
        
        %\draw[edge] (BT) to (AT);
        %\draw[edge] (AB) to (BB);
        
        %\node at (.5,-11.15){2};
        
        %\node at (.5,-11.15){2};
        %\node at (.5,-11.85){1};
    \end{tikzpicture}};
    
    \node at (11,-12){\begin{tikzpicture}[scale=0.475]
        \node[draw] (A1) at (0,20){{\tiny $a$}};
        \node[draw] (B1) at (1,19){{\tiny $b$}};
        \node[draw] (C1) at (2,18){{\tiny $c$}};
        \node[draw] (D1) at (3,17){{\tiny $d$}};
        \node[draw] (E1) at (4,16){{\tiny $e$}};
        \node[draw] (F1) at (5,15){{\tiny $f$}};
        \node[draw] (G1) at (2,16){{\tiny $g$}};
        \node[draw] (D2) at (3,15){{\tiny $d$}};
        \node[draw] (E2) at (4,14){{\tiny $e$}};
        \node[draw] (C2) at (2,14){{\tiny $c$}};
        \node[draw] (B2) at (1,13){{\tiny $b$}};
        \node[draw] (A2) at (0,12){{\tiny $a$}};
        \node[draw] (D3) at (3,13){{\tiny $d$}};
        \node[draw] (C3) at (2,12){{\tiny $c$}};
        \node[draw] (B3) at (1,11){{\tiny $b$}};
        \node[draw] (G2) at (4,12){{\tiny $g$}};
        \node[draw] (D4) at (3,11){{\tiny $d$}};
        \node[draw] (C4) at (2,10){{\tiny $c$}};
        \node[draw] (E3) at (4,10){{\tiny $e$}};
        \node[draw] (F2) at (5,9){{\tiny $f$}};
        \node[draw] (D5) at (3,9){{\tiny $d$}};
        \node[draw] (E4) at (4,8){{\tiny $e$}};
        \node[draw] (G3) at (2,8){{\tiny $g$}};
        \node[draw] (D6) at (3,7){{\tiny $d$}};
        \node[draw] (C5) at (2,6){{\tiny $c$}};
        \node[draw] (B4) at (1,5){{\tiny $b$}};
        \node[draw] (A3) at (0,4){{\tiny $a$}};
        
        \draw (A1) -- (B1) -- (C1) -- (D1) -- (E1) -- (F1) (G1) -- (D2) -- (E2) (C2) -- (D3) -- (G2) (B2) -- (C3) -- (D4) -- (E3) -- (F2) (A2) -- (B3) -- (C4) -- (D5) -- (E4) (G3) -- (D6);
        \draw (D1) -- (G1) (E1) -- (D2) -- (C2) -- (B2) -- (A2) (F1) -- (E2) -- (D3) -- (C3) -- (B3) (G2) -- (D4) -- (C4) (E3) -- (D5) -- (G3) (F2) -- (E4) -- (D6) -- (C5) -- (B4) -- (A3);
        
        \node (A) at (0,2.5){{\tiny $a$}};
        \node (B) at (1,2.5){{\tiny $b$}};
        \node (C) at (2,2.5){{\tiny $c$}};
        \node (D) at (3,2.5){{\tiny $d$}};
        \node (E) at (4,2.5){{\tiny $e$}};
        \node (F) at (5,2.5){{\tiny $f$}};
        \node (G) at (3,3.5){{\tiny $g$}};
        % \node at (2.5,1.5){{\footnotesize Type $E_7$}};
        
        \draw (A) -- (B) -- (C) -- (D) -- (E) -- (F) (D) -- (G);
        \end{tikzpicture}};
        
        \node at (11,-2){\begin{tikzpicture}[scale=0.475]
        \node[draw] (AA1) at (-11,17){{\tiny $a$}};
        \node[draw] (BB1) at (-10,16){{\tiny $b$}};
        \node[draw] (CC1) at (-9,15){{\tiny $c$}};
        \node[draw] (DD1) at (-8,14){{\tiny $d$}};
        \node[draw] (EE1) at (-7,13){{\tiny $e$}};
        \node[draw] (FF1) at (-10,14){{\tiny $f$}};
        \node[draw] (CC2) at (-9,13){{\tiny $c$}};
        \node[draw] (DD2) at (-8,12){{\tiny $d$}};
        \node[draw] (BB2) at (-10,12){{\tiny $b$}};
        \node[draw] (AA2) at (-11,11){{\tiny $a$}};
        \node[draw] (CC3) at (-9,11){{\tiny $c$}};
        \node[draw] (BB3) at (-10,10){{\tiny $b$}};
        \node[draw] (FF2) at (-8,10){{\tiny $f$}};
        \node[draw] (CC4) at (-9,9){{\tiny $c$}};
        \node[draw] (DD3) at (-8,8){{\tiny $d$}};
        \node[draw] (EE2) at (-7,7){{\tiny $e$}};
        
        \draw (AA1) -- (BB1) -- (CC1) -- (DD1) -- (EE1) (FF1) -- (CC2) -- (DD2) (BB2) -- (CC3) -- (FF2) (AA2) -- (BB3) -- (CC4) -- (DD3) -- (EE2);
        
        \draw (CC1) -- (FF1) (DD1) -- (CC2) -- (BB2) -- (AA2) (EE1) -- (DD2) -- (CC3) -- (BB3) (FF2) -- (CC4) (FF2) -- (CC4);
        
        \node (AA) at (-11,5.5){{\tiny $a$}};
        \node (BB) at (-10,5.5){{\tiny $b$}};
        \node (CC) at (-9,5.5){{\tiny $c$}};
        \node (DD) at (-8,5.5){{\tiny $d$}};
        \node (EE) at (-7,5.5){{\tiny $e$}};
        \node (FF) at (-9,6.5){{\tiny $f$}};
        % \node at (-9,4.5){{\footnotesize Type $E_6$}};
        
        \draw (AA) -- (BB) -- (CC) -- (DD) -- (EE) (CC) -- (FF); 
    \end{tikzpicture}};
    \end{tikzpicture}
    \caption{Hasse diagrams and colorings for all connected finite $\Gamma$-colored minuscule posets.}
    \label{FigFiniteMinusculeHasseDiagrams}
\end{figure} 

%: Altered the first sentence to reflect the reorganization of the beginning of this section.  Added a new final sentence to this paragraph.

%: Changed "Kac--Moody" to "Lie" in three different places.

The connected infinite $\Gamma$-colored minuscule posets appearing in Theorem \ref{ThmMinusculeClassify}(ii) are the full heaps of R.M. Green.  These posets were the main objects of study in \cite{Gre} and were originally used by Green to construct representations of affine Lie algebras \cite{Gre1} and affine Weyl groups \cite{Gre2}.  Green classified all full heaps colored by Dynkin diagrams of affine Lie type in Theorem 6.6.2 of \cite{Gre}.  His doctoral student, Z.S. McGregor-Dorsey, showed in Theorem 4.7.1 of \cite{McD} that if a connected Dynkin diagram colors a full heap, then it must have affine Lie type.  Hence Green's classification lists all connected full heaps.  See \cite[Appx. C]{McD} for the Hasse diagrams of all connected full heaps and \cite[Appx. A]{McD} for the corresponding Dynkin diagrams of affine Lie type.
We translate between our setting and Green's in Section \ref{SectionHistory} and describe a connection between finite and infinite $\Gamma$-colored minuscule posets via the ``principal subheaps'' of full heaps.
%\tcb{We describe how to translate between Green's notation and ours in Section \ref{SectionHistory} \tcg{and explain the connection between $\Gamma$-colored minuscule posets and the ``principle subheaps'' of full heaps}.}
%Appendix C of \cite{McD} provides an Atlas of Full Heaps, including figures of each connected full heap.

%: Changed ``the disjoint'' to ``a disjoint'' in the statement.

% \begin{theorem}\label{ThmMinusculeClassify}
% Let $P$ be a $\Gamma$-colored poset.  The following are equivalent:
% \begin{enumerate}[(a),nosep]
%     \item The poset $P$ is $\Gamma$-colored minuscule.
%     \item The poset $P$ is \tcb{a} disjoint union of connected posets in which each is isomorphic to one of the following:
% \begin{enumerate}[(i),nosep]
%     \item A finite poset of type $A$ standard, $A$ exterior, $B$, $C$, $D$ standard, $D$ spin, $E_6$, or $E_7$, or
%     \item An infinite poset that is one of the connected full heaps of Theorem 6.6.2 of \cite{Gre}.
% \end{enumerate}
% \end{enumerate}
% \noindent In this case $\Gamma$ is the disjoint union of the Dynkin diagrams \tcr{coloring} the connected components of $P$.  \tcr{These Dynkin diagrams have finite Lie type for posets listed in (i) and affine Lie type for posets listed in (ii).}
% \end{theorem}

\begin{theorem}\label{ThmMinusculeClassify}
Let $P$ be a $\Gamma$-colored poset.  Then $P$ is $\Gamma$-colored minuscule if and only if $P$ is a disjoint union of connected posets in which each is isomorphic to one of the following:
\begin{enumerate}[(i),nosep]
    \item A finite poset of type $A$ standard, $A$ exterior, $B$, $C$, $D$ standard, $D$ spin, $E_6$, or $E_7$, or
    \item An infinite poset that is one of the connected full heaps of Theorem 6.6.2 of \cite{Gre}.
\end{enumerate}
\noindent In this case $\Gamma$ is the disjoint union of the Dynkin diagrams coloring the connected components of $P$.  These Dynkin diagrams have finite Lie type for posets listed in (i) and affine Lie type for posets listed in (ii).
\end{theorem}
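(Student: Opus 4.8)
The strategy is to reduce to the connected case and then assemble the classifications already in hand: Theorems \ref{ThmChainandSimplyLaced}, \ref{ThmMultLacedClassify}, and \ref{ThmSimpLacedSlantIrred} in the finite case, together with Green's and McGregor-Dorsey's classification of connected full heaps in the infinite case. For the forward direction, let $P$ be $\Gamma$-colored minuscule. By \cite[Prop. 25]{dC-class}, $P$ is the disjoint union of its connected components, each of which is a connected $\Gamma'$-colored minuscule poset with $\Gamma'$ its set of colors; Remark \ref{PconniffGamma} identifies these color sets with the connected components of $\Gamma$, so $\Gamma$ is their disjoint union. Hence it suffices to show that a connected $\Gamma$-colored minuscule poset is isomorphic to one of the listed posets, of the indicated Lie type, and conversely that each listed poset is $\Gamma$-colored minuscule.

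For connected finite $P$, Proposition \ref{PropChainorSlantIrreducible} gives two cases. If $P$ is a chain and $\Gamma$ is simply laced, then Theorem \ref{ThmChainandSimplyLaced} shows $P$ has type $A$ standard. Otherwise $P$ is slant irreducible as a $\Gamma$-colored $d$-complete poset: if $\Gamma$ is multiply laced, Theorem \ref{ThmMultLacedClassify} gives type $B$ or type $C$; if $\Gamma$ is simply laced, then either $|\Gamma| = 1$, so $P$ is a single element of type $A$ standard by Remark \ref{RemMoreThanOneColor}, or $|\Gamma| > 1$ and Theorem \ref{ThmSimpLacedSlantIrred} gives type $A$ exterior, $D$ standard, $D$ spin, $E_6$, or $E_7$. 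Reading off the associated Dynkin diagrams from Figure \ref{FigFiniteMinusculeHasseDiagrams} shows each has finite Lie type, which establishes case (i).

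The infinite case is where I expect the only genuinely new argument. Let $P$ be connected, infinite, and $\Gamma$-colored minuscule; then $P$ is $\Gamma$-colored $d$-complete, and so is its order dual $P^*$, since the dual of a $\Gamma$-colored minuscule poset is $\Gamma$-colored minuscule. Both $P$ and $P^*$ are connected and infinite, so Theorem \ref{Thm22Str3} realizes $P$ as a filter of a connected full heap $H$ and $P^*$ as a filter of a connected full heap, i.e.\ $P$ as an ideal of a connected full heap $H'$. Fix $b \in \Gamma$. Then $P_b = P \cap H_b$ is a nonempty up-set of the chain $H_b$, which is isomorphic to $\mathbb{Z}$ by G3; dually, $P_b = P \cap H'_b$ is a nonempty down-set of $H'_b \cong \mathbb{Z}$. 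Since a nonempty up-set of $\mathbb{Z}$ is isomorphic to $\mathbb{Z}$ or to $\mathbb{Z}_{\ge 0}$, a nonempty down-set of $\mathbb{Z}$ to $\mathbb{Z}$ or to $\mathbb{Z}_{\le 0}$, and $\mathbb{Z}_{\ge 0}$ is isomorphic to neither $\mathbb{Z}$ nor $\mathbb{Z}_{\le 0}$, we get $P_b \cong \mathbb{Z}$. Thus $P$ satisfies G3 and is a connected full heap. By Theorem 4.7.1 of \cite{McD} its Dynkin diagram has affine Lie type, so by Theorem 6.6.2 of \cite{Gre} $P$ appears in Green's list, which establishes case (ii). One should be careful that the full heaps furnished by Theorem \ref{Thm22Str3} are colored compatibly with $P$, so that $P_b$ really is an up-set (respectively down-set) of a copy of $\mathbb{Z}$; this holds because $P$ sits inside $H$ and $H'$ as a $\Gamma$-colored subposet.

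For the converse, each finite type was verified to be $\Gamma$-colored minuscule in Sections \ref{SectionMultiplyLaced}--\ref{SectionSimplyLaced}, and a connected full heap satisfies EC, NA, AC, ICE2 by definition and satisfies UCB1 and LCB1 vacuously, since by G3 no color class has a maximal or a minimal element; hence it is $\Gamma$-colored minuscule. Finally, because EC, NA, AC, ICE2, UCB1, and LCB1 are conditions only on single color classes, on intervals, and on frontiers, and because elements in distinct components of a disjoint union are incomparable and hence (by AC) have distant colors, a disjoint union of $\Gamma_i$-colored minuscule posets over pairwise-disjoint Dynkin diagrams is again $\Gamma$-colored minuscule for the disjoint-union diagram $\Gamma$, and the Lie-type claim for $\Gamma$ follows component by component. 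The main obstacle is the infinite case, namely ruling out proper filters of full heaps; passing to the order dual is what makes this step short.
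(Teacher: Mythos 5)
Your proof is correct and follows essentially the same route as the paper's: reduce to connected components via Proposition 25 of \cite{dC-class}, assemble Proposition \ref{PropChainorSlantIrreducible} with Theorems \ref{ThmMultLacedClassify}, \ref{ThmChainandSimplyLaced}, and \ref{ThmSimpLacedSlantIrred} in the finite case, and in the infinite case apply Theorem \ref{Thm22Str3} to both $P$ and its order dual to force G3 and conclude $P$ is a connected full heap. Your more detailed argument that $P_b \cong \mathbb{Z}$ (as simultaneously an up-set and a down-set of $\mathbb{Z}$) and your explicit appeal to McGregor-Dorsey's theorem simply spell out steps the paper treats more tersely.
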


%: Added more detail to the first paragraph to show how Prop. 25 from dC-class is being used to reduce to the connected case.

\begin{proof}
By \cite[Prop. 25]{dC-class}, a poset $(P,\Gamma,\kappa)$ is $\Gamma$-colored minuscule if and only if it decomposes into at most $|\Gamma|$ triples $(P_1,\Gamma_1,\kappa_1),\dots,(P_r,\Gamma_r,\kappa_r)$, where $P$ is the disjoint union of the connected posets $P_1,\dots,P_r$ and $\Gamma$ is the disjoint union of the connected Dynkin diagrams $\Gamma_1,\dots,\Gamma_r$ and $P_i$ is $\Gamma_i$-colored minuscule for all $1 \le i \le r$.  Hence the proof reduces to the connected case, so we now assume that $P$ is connected.

%By \cite[Prop. 25]{dC-class} we must prove that a connected $\Gamma$-colored poset is $\Gamma$-colored minuscule if and only if it is one of the posets listed in (i) or (ii).
%Hence we now assume that $P$ is connected.

Suppose $P$ is a connected finite $\Gamma$-colored minuscule poset.  If $\Gamma$ is multiply laced, then $P$ has type $B$ or type $C$ by Theorem \ref{ThmMultLacedClassify}.  Now assume $\Gamma$ is simply laced.  If $P$ is a chain, then $P$ has type $A$ standard by Theorem \ref{ThmChainandSimplyLaced}.  Now assume $P$ is not a chain so that $\Gamma$ contains more than one color by EC.  Then $P$ is slant irreducible as a $\Gamma$-colored $d$-complete poset by Proposition \ref{PropChainorSlantIrreducible} and so Theorem \ref{ThmSimpLacedSlantIrred} shows $P$ has type $A$ exterior, $D$ standard, $D$ spin, $E_6$, or $E_7$.  Conversely, all of these types are $\Gamma$-colored minuscule.

%Suppose $P$ is finite.  Then Proposition \ref{PropChainorSlantIrreducible} shows $P$ is a chain and $\Gamma$ is simply laced, or $P$ is slant irreducible as a $\Gamma$-colored $d$-complete poset.  If $P$ is a chain and $\Gamma$ is simply laced, then Theorem \ref{ThmChainandSimplyLaced} shows $P$ has type $A$ standard.  If $P$ is slant irreducible and $\Gamma$ is multiply laced, then Theorem \ref{ThmMultLacedClassify} shows $P$ has type $B$ or $C$.  If $P$ is slant irreducible and $\Gamma$ is simply laced, then Theorem \ref{ThmSimpLacedSlantIrred} shows $P$ has type $A$ exterior, type $D$ standard, type $D$ spin, type $E_6$, or type $E_7$.  All of these types are $\Gamma$-colored minuscule.

%: 11: Added "as an uncolored poset"

Now suppose $P$ is a connected infinite $\Gamma$-colored minuscule poset.  Then both $P$ and the order dual $P^*$ are connected infinite $\Gamma$-colored $d$-complete posets.  Thus Theorem \ref{Thm22Str3} shows both $P$ and $P^*$ are filters of connected full heaps.  Hence for every $a \in \Gamma$, the set $P_a$ is unbounded above and below and is isomorphic as an uncolored poset to $\mathbb{Z}$. Thus G3 holds, and so $P$ is a full heap.  Conversely, if $P$ is a full heap, then it satisfies G3.  Hence UCB1 and LCB1 hold vacuously, and so $P$ is $\Gamma$-colored minuscule.

%: Added a final sentence to justify the new Dynkin diagram type statement.

The statement about finite and affine Lie types holds by inspection; see \cite[\S 4.8]{Kac}.
\end{proof}

%\newpage 

%: I deleted the previous section header from this location.

%\section{Classification of \texorpdfstring{$P$}{P}-minuscule Kac--Moody representations}\label{SectionPMinuscule}

We now show how our work applies to Kac--Moody representation theory.  The algebraic terms used here are precisely defined in \cite{Kum}.  Let $\Gamma$ be a Dynkin diagram and set $n := |\Gamma|$ and $[n] := \{1,2,\dots,n\}$.  Fix a bijection $\nu : \Gamma \to [n]$ to number the nodes of $\Gamma$.  Under this numbering, the matrix $[\theta_{\nu(a),\nu(b)}]$ becomes a \emph{generalized Cartan matrix}.  For brevity, we will usually identify $a \in \Gamma$ with $\nu(a) \in [n]$; for example, we will write $[\theta_{ab}]$ for the generalized Cartan matrix with the ordering understood.  

\begin{remark}\label{RemSymmetrizable}
The generalized Cartan matrix $[\theta_{ab}]$ may or may not be \emph{symmetrizable}.  If $\Gamma$ colors either a $\Gamma$-colored $d$-complete or $\Gamma$-colored minuscule poset, then $[\theta_{ab}]$ is symmetrizable.  The $\Gamma$-colored $d$-complete case is handled by Proposition 27 of \cite{dC-class}, and the $\Gamma$-colored minuscule case follows from Theorem \ref{ThmMinusculeClassify} since $\Gamma$ must have connected components of finite or affine Lie type.
\end{remark}

The matrix $[\theta_{ab}]$ can be used to create the \emph{Kac--Moody algebra} $\mathfrak{g}$.  The \emph{Kac--Moody derived subalgebra} is $\mathfrak{g}' := [\mathfrak{g},\mathfrak{g}]$, where $[\cdot,\cdot]$ is the Lie bracket for $\mathfrak{g}$.  This subalgebra can be defined with generators $\{x_a,y_a,h_a\}_{a \in \Gamma}$ subject to the following relations:
\begin{itemize}[nosep]
    \item [] (XX) \  $\underbrace{[x_a,[x_a,\dots,[x_a}_{\text{$1 - \theta_{ba}$ times}},x_b] \dots ]] = 0$ for all $a,b \in \Gamma$ such that $a \ne b$,
    \item [] (YY) \ $\underbrace{[y_a,[y_a,\dots,[y_a}_{\text{$1 - \theta_{ba}$ times}},y_b] \dots]] = 0$ for all $a,b \in \Gamma$ such that $a \ne b$,
    \item [] (HH) \ $[h_b,h_a] = 0$ for all $a,b \in \Gamma$,
    \item [] (HX) \ $[h_b,x_a] = \theta_{ab} x_a$ for all $a,b \in \Gamma$,
    \item [] (HY) \ $[h_b,y_a] = -\theta_{ab} y_a$ for all $a,b \in \Gamma$, and
    \item [] (XY) \ $[x_a,y_b] = \delta_{ab} h_a$ for all $a,b \in \Gamma$, where $\delta_{ab}$ is the Kronecker delta.
\end{itemize} 

%\begin{remark}\label{RemSymmetrizable}
%The generalized Cartan matrix $[\theta_{ab}]$ may or may not be \emph{symmetrizable}.  If $\Gamma$ colors either a $\Gamma$-colored $d$-complete or $\Gamma$-colored minuscule poset, then $[\theta_{ab}]$ is symmetrizable.  The $\Gamma$-colored $d$-complete case is handled by Proposition 7.1 of \cite{dC-class}, and the $\Gamma$-colored minuscule case follows from the fact that only Dynkin diagrams of finite and affine type color these posets.
%\end{remark}

%\noindent The matrix $[\theta_{ab}]$ and Kac--Moody algebra $\mathfrak{g}$ may or may not be \emph{symmetrizable}.

%: 33: Deleted the word "free" from "free complex vector space".

Let $P$ be a locally finite $\Gamma$-colored poset.  The \emph{splits} of $P$ are the pairs $(F,I)$ where $F$ is a filter of $P$ and $I := P - F$ is its corresponding ideal.  Let $\mathcal{FI}(P)$ denote the set of all splits of $P$ and let $V := \langle \mathcal{FI}(P) \rangle$ be the complex vector space with basis $\mathcal{FI}(P)$.  The basis vector corresponding to $(F,I) \in \mathcal{FI}(P)$ is denoted $\langle F,I \rangle$.  Let $a \in \Gamma$ and $(F,I) \in \mathcal{FI}(P)$.  If there are finitely many elements of color $a$ that are minimal in $F$ (respectively maximal in $I$), then define $X_a.\langle F,I \rangle := \sum \langle F-\{x\},I \cup \{x\} \rangle$ (respectively $Y_a.\langle F,I \rangle := \sum \langle F \cup \{x\},I-\{x\}\rangle$), where the sum is taken over those minimal (respectively maximal) elements of $F$ (respectively $I$) of color $a$.  If these sums exist for all $b \in \Gamma$ and $(F,I) \in \mathcal{FI}(P)$, then extend these functions linearly to $V$ and call them the \emph{color raising} (respectively \emph{lowering}) \emph{operators} on $V$.  Note that the property EC is sufficient to guarantee these operators exist, since then all sums defining these operators will have at most one term.

We made the following definitions in \cite{Str,Unify}.

\begin{definition}\label{DefPMinuscule}
Suppose $P$ is a locally finite $\Gamma$-colored poset and the operators $\{X_a,Y_a\}_{a \in \Gamma}$ are defined on $V$.
\begin{enumerate}[(a),nosep]
    \item We say $\mathcal{FI}(P)$ \emph{carries a representation of $\mathfrak{g}'$} if there exist diagonal operators $\{H_a\}_{a \in \Gamma}$ (with respect to the basis of splits) such that the operators $\{X_a,Y_a,H_a\}_{a \in \Gamma}$ satisfy the relations XX, YY, HX, HY, and XY for $\mathfrak{g}'$ under the commutator $[A,B] := AB - BA$ on $\text{End}(V)$.
    \item A representation of $\mathfrak{g}'$ carried by $\mathcal{FI}(P)$ is \emph{$P$-minuscule} if the eigenvalues for the diagonal operators $\{H_a\}_{a \in \Gamma}$ on the basis of splits are in the set $\{-1,0,1\}$.
\end{enumerate}
\end{definition}

\noindent There was no need to require the relation HH in Part (a) of Definition \ref{DefPMinuscule}; this relation holds automatically since the operators $\{H_a\}_{a \in \Gamma}$ are diagonal on the basis of splits.

%: Rewrote the final sentence of this paragraph.

%We restate here one of the main results from \cite{Unify}.  
One of the main results from \cite{Unify} stated that a $\Gamma$-colored poset $P$ is $\Gamma$-colored minuscule if and only if the lattice $\mathcal{FI}(P)$ carries a $P$-minuscule representation of $\mathfrak{g}'$.
This result was obtained in the simply laced case as Theorem 38(b) of \cite{Unify} and in the general case as Theorem 6.1.1(b) of \cite{Str}.  Applying Theorem \ref{ThmMinusculeClassify} obtains the classification of $P$-minuscule representations.

\begin{theorem}\label{ThmPMinusculeClassify}
Let $P$ be a $\Gamma$-colored poset.  Then $\mathcal{FI}(P)$ carries a $P$-minuscule representation of $\mathfrak{g}'$ if and only if $P$ is a disjoint union of connected posets in which each is isomorphic to one of the following:
\begin{enumerate}[(i),nosep]
    \item A finite poset of type $A$ standard, $A$ exterior, $B$, $C$, $D$ standard, $D$ spin, $E_6$, or $E_7$, or
    \item An infinite poset that is one of the connected full heaps of Theorem 6.6.2 of \cite{Gre}.  $\qed$
\end{enumerate}
\end{theorem}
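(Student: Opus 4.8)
The plan is to deduce this immediately by combining two results already in hand: the representation-theoretic equivalence recalled from \cite{Unify,Str} just before the statement, and the combinatorial classification of Theorem \ref{ThmMinusculeClassify}. The equivalence asserts that for a $\Gamma$-colored poset $P$ (locally finite, and with the color operators $\{X_a,Y_a\}_{a\in\Gamma}$ defined on $V = \langle \mathcal{FI}(P)\rangle$), the lattice $\mathcal{FI}(P)$ carries a $P$-minuscule representation of $\mathfrak{g}'$ if and only if $P$ is $\Gamma$-colored minuscule; this was obtained as Theorem 38(b) of \cite{Unify} in the simply laced case and as Theorem 6.1.1(b) of \cite{Str} in general. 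Since the hypothesis ``$\mathcal{FI}(P)$ carries a $P$-minuscule representation'' presupposes that these operators are defined, and since $\Gamma$-colored minuscule posets satisfy EC (so the operators are automatically defined, with each defining sum having at most one term), the two sides of the desired biconditional match up with the two sides of this equivalence.

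First I would invoke the equivalence from \cite{Unify,Str} to reduce the claim to the statement that $P$ is $\Gamma$-colored minuscule if and only if $P$ is a disjoint union of connected posets, each isomorphic to a finite poset of type $A$ standard, $A$ exterior, $B$, $C$, $D$ standard, $D$ spin, $E_6$, or $E_7$, or to one of the connected full heaps of Theorem 6.6.2 of \cite{Gre}. Then I would apply Theorem \ref{ThmMinusculeClassify}, which is precisely this statement, to finish. The resulting proof is essentially two sentences.

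The only point requiring any care—and it is the ``main obstacle'' only in a very weak sense—is the bookkeeping needed to confirm that the class of posets in the hypothesis of the present theorem and the class in the conclusion are literally the same class appearing on the two sides of Theorem \ref{ThmMinusculeClassify}, so that no supplementary argument (for instance, re-deriving that the $\Gamma_i$ decompose along connected components, which is already folded into Theorem \ref{ThmMinusculeClassify} via \cite[Prop.~25]{dC-class}) is needed. Because the present statement omits the addendum about finite versus affine Lie type that accompanies Theorem \ref{ThmMinusculeClassify}, there is nothing further to verify, and the proof concludes directly.
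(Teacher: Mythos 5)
Your proposal is correct and matches the paper's own argument: the theorem is stated with an immediate $\qed$ precisely because it follows by combining the equivalence from Theorem 38(b) of \cite{Unify} (general case in Theorem 6.1.1(b) of \cite{Str}) with Theorem \ref{ThmMinusculeClassify}. Your bookkeeping remarks about the operators being defined under EC and the omitted Lie-type addendum are consistent with the paper's treatment.
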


\noindent For a $P$-minuscule representation of $\mathfrak{g}'$, the diagonal operators $\{H_a\}_{a \in \Gamma}$ are uniquely determined by the rule
$$ H_b.\langle F,I \rangle = \left\{ \begin{array}{rl} 
-\langle F,I \rangle & \text{if $b$ is the color of a minimal element of $F$} \\
\langle F,I \rangle & \text{if $b$ is the color of a maximal element of $I$} \\
0 & \text{otherwise}
\end{array} \right. $$
for every $b \in \Gamma$ and $(F,I) \in \mathcal{FI}(P)$.  See Proposition 33 and Theorem 35 of \cite{Unify} for the simply laced case and Theorem 5.4.2 of \cite{Str} for the general case.

%: This section uses some of the previous final section, but it is essentially completely new.  It contains much more (and hopefully better) exposition of the place of Gamma-colored minuscule posets in the literature. 

\section{Minuscule posets, Weyl group elements, and coroots}\label{SectionHistory}

In Theorem \ref{ThmColoredCoroots} we realize connected finite $\Gamma$-colored minuscule posets as posets of colored coroots. This result recreates the 1984 theorem of Proctor \cite[Thm. 11]{BLPP} that produced the first appearance of colored minuscule posets, but in our axiomatic setting.  We also discuss other appearances of $\Gamma$-colored minuscule posets.  The algebraic objects we are primarily concerned with are semisimple Lie algebras and their representations, and so \cite{Hum} may be consulted for most definitions and basic facts.

%Precise definitions may also be found in \cite{Kum}, especially when more general Kac--Moody terminology is needed.

%In this section the algebraic objects we are primarily concerned with are semisimple Lie algebras and their representations, and so \cite{Hum} may be consulted for most algebraic definitions.  Precise definitions may also be found in \cite{Kum}, especially when more general Kac--Moody terminology is needed.

Let $\mathfrak{g}$ be a \textit{semisimple Lie algebra} of \textit{rank} $n$.
Finite dimensional \textit{irreducible representations} of $\mathfrak{g}$ are parameterized by the \textit{dominant integral weights} via their \textit{highest weights}.  
The \textit{Weyl group} $W$ acts on the set of \textit{weights} of each representation.  
If $V(\lambda)$ is the irreducible representation of $\mathfrak{g}$ with highest weight $\lambda$, then it is a \textit{minuscule representation} if the Weyl group action on weights is transitive.  
Such highest weights $\lambda$ are called \textit{minuscule weights}; they exist in finite Lie types $A_n$ for $n > 0$, $B_n$ for $n > 1$, $C_n$ for $n > 2$, $D_n$ for $n > 3$, and $E_6$ and $E_7$.  
(Restrictions on $n$ prevent redundant types.)
Each minuscule weight is one of the \textit{fundamental weights} $\{\omega_1,\dots,\omega_n\}$; the list of minuscule weights is given in \cite[Ch. VIII, \S 7.3]{Bou}.

The Weyl group is generated by the \textit{simple reflections} in $S = \{s_1,\dots,s_n\}$.  
If $J \subseteq S$, then it generates the \textit{parabolic subgroup} $W_J$ of $W$.  
The left cosets in the quotient $W / W_J$ each have a unique minimal \textit{length} representative; let $W^J$ denote the set of these elements.  
Here $W^J$ inherits the \textit{Bruhat order} from $W$ (see \cite{BjBr}).  
The Bruhat poset $W^J$ was developed to study the corresponding flag manifold $G / P_J$.
Denote $\langle j \rangle := S - \{s_j\}$.
%If $J = S - \{s_j\}$ for some $1 \le j \le n$, then this Bruhat poset is denoted $W^{\langle j \rangle}$.
By 1978, C.S. Seshadri had obtained his standard monomial bases \cite{Ses} for the minuscule flag manifolds $G/P_{\langle j \rangle}$ by working with the Bruhat poset $W^{\langle j \rangle}$ when $\omega_j$ is minuscule.

%\tcg{We give Proctor's original definition of uncolored minuscule posets from \cite{BLPP}.}
The weights of any representation of $\mathfrak{g}$ are ordered by the \textit{simple roots} $\{\alpha_1,\dots,\alpha_n\}$ via the rule $\nu \le \mu$ if $\mu - \nu$ is a non-negative integral sum of simple roots.
%\tcg{We give Proctor's definition of uncolored minuscule posets from \cite{BLPP}, adjusted slightly to ensure the highest weight of a minuscule representation remains at the top of its weight diagram under this standard root order on weights.}
%\tcg{We note that Proctor used the dual of this ordering in \cite{BLPP}.  We avoid this reversal and adjust the definition accordingly to ensure the highest weight of a minuscule representation remains at the top of its weight diagram.  This alteration introduces no problems since the resulting minuscule posets are self-dual.}
Let $\omega_j$ for $1 \le j \le n$ be a minuscule weight and let $V(\omega_j)$ be the corresponding minuscule representation.
Using a process (later called the numbers game of Mozes \cite{Moz}) to simultaneously generate the weight diagram of $V(\omega_j)$ and minimal length coset representatives in $W^{\langle j \rangle}$, Proctor proved \cite[\S \S 3--4]{BLPP} that the weight diagram of $V(\omega_j)$ is dual isomorphic to $W^{\langle j \rangle}$ and that these structures are distributive lattices.  
%Hence weight diagrams of minuscule representations are distributive lattices.
%\tcg{So that these structures would have the same orientation when drawn, Proctor worked with the order dual of the weight diagram of $V(\omega_j)$.  We avoid this dualization so that the highest weight is located at the top of the weight diagram.}
Let $P$ be the poset of join irreducible elements of the weight diagram of $V(\omega_j)$.
Then this weight diagram is isomorphic to the distributive lattice $J(P)$ of order ideals of $P$, ordered by inclusion.  
%\tcr{$\langle$Can I still use ideals ordered by inclusion?$\rangle$}
The set $\mathcal{FI}(P)$ of splits introduced in Section \ref{SectionMainResults} is just a reformulation of $J(P)$.
%The irreducible \textit{minuscule posets} of \cite[\S 4]{BLPP} are these posets of join irreducible elements.
%Proctor defined \tcb{these posets of join irreducible elements} to be \tcb{irreducible} \textit{minuscule posets}.  
%\tcb{(His definition actually used the order dual of the weight diagram of $V(\omega_j)$, but this difference is unimportant since these weight diagrams and the resulting uncolored minuscule posets are self-dual.)}
If $\mathfrak{g}$ has finite Lie type $\mathcal{L}_n$ for $\mathcal{L} \in \{A,B,C,D,E\}$, then we denote this uncolored poset $P$ by $\ell_n(j)$.

Posets that arise this way are thus indexed by minuscule weights; the full list
%The full list of these posets \tcb{indexed by minuscule weights} 
is $a_n(1),a_n(2),\dots,a_n(n)$ for $n > 0$, $b_n(n)$ for $n > 1$, $c_n(1)$ for $n > 2$, $d_n(1),d_n(n-1)$, and $d_n(n)$ for $n > 3$, $e_6(1)$, $e_6(5)$, and $e_7(6)$, with fundamental weights corresponding to the node numbering given in \cite[\S 4.8]{Kac}. 
These posets are the irreducible \textit{minuscule posets} of \cite[\S 4]{BLPP}.
%\tcb{The irreducible \textit{minuscule posets} of \cite[\S 4]{BLPP} are these posets of join irreducible elements.}
%\tcb{Combinatorial descriptions of the posets in this list can be found in \cite[Prop. 4.2]{BLPP}.}
We remark that some minuscule posets are isomorphic (e.g. $a_n(j) \cong a_n(n+1-j)$ for $1 \le j \le n$), even across Lie types (e.g. $b_{n-1}(n-1) \cong d_n(n) \cong d_n(n-1)$ for $n > 3$), but are listed separately to emphasize the distinct minuscule representations from which they arise.

We introduce an analogous notation for the connected finite $\Gamma$-colored minuscule posets of Theorem \ref{ThmMinusculeClassify}(i) that is also indexed by minuscule weights.
Fix such a poset $P$ with coloring $\kappa : P \to \Gamma$, and let $n := |\Gamma|$.  
If $P$ has type $\mathcal{L}$ for $\mathcal{L} \in \{A,B,C,D,E\}$ (possibly with standard, exterior, or spin variant), then $\Gamma$ has finite Lie type $\mathcal{L}_n$.
Let $\text{Aut}(\Gamma)$ be the group of diagram automorphisms of $\Gamma$.  
This group has order one for $A_1$, two for $A_n$ for $n > 1$, one for $B_n$ for $n > 1$, one for $C_n$ for $n > 2$, six for $D_4$, two for $D_n$ for $n > 4$, two for $E_6$, and one for $E_7$.  
Fix $\varphi \in \text{Aut}(\Gamma)$ and suppose the maximal element of $P$ has color $a$. 
Using the numbering $\nu : \Gamma \to [n]$ from \cite[\S 4.8]{Kac}, we denote by $\ell_n^\kappa(\nu \varphi(a))$ the $\Gamma$-colored minuscule poset obtained from $P$ by replacing all colors with the new set of colors $[n]$ via the bijection $\nu \varphi : \Gamma \to [n]$.
%Given $\varphi \in \text{Aut}(\Gamma)$, \tcb{we denote by} $\ell_n^\kappa(\nu \varphi(a))$ the $\Gamma$-colored minuscule poset
%\tcb{in which colors in $\Gamma$ are identified with $[n]$ via the bijection}
%using the numbering 
%$\nu \varphi : \Gamma \to [n]$
%of $\Gamma$ 
%and
%\tcb{for which the} 
%coloring 
%\tcb{of $P$ is given by}
%$\nu \varphi \kappa : P \to [n]$\tcb{.}
%of $P$.  
%$\ell_n^\kappa(j)$ has maximal element of color $j$ and so is indexed by the minuscule weight $\omega_j$ in type $\mathcal{L}_n$.
%The poset $\ell_n^\kappa(j)$ is unique up to $\Gamma$-colored poset isomorphism.
%\tcb{Applying diagram automorphisms allows distinct minuscule weights to index isomorphic $\Gamma$-colored minuscule posets.}
%\tcb{Diagram automorphisms are used to produce distinct maximal colors when the underlying 

Repeating for each connected finite $\Gamma$-colored minuscule poset of type $\mathcal{L}$ and each $\varphi \in \text{Aut}(\Gamma)$ produces exactly one poset $\ell_n^\kappa(j)$ up to $\Gamma$-colored poset isomorphism for each minuscule weight $\omega_j$ in type $\mathcal{L}_n$.  The maximal element of $\ell_n^\kappa(j)$ has color $j$.
This notation for the $\Gamma$-colored case is compatible with the uncolored case in the sense that the uncolored poset $\ell_n(j)$ is the underlying poset for $\ell_n^\kappa(j)$.
%There is exactly one poset $\ell_n^\kappa(j)$ (up to $\Gamma$-colored poset isomorphism) for each minuscule weight $\omega_j$ in type $\mathcal{L}_n$.
%If $\nu \varphi(a) = j$, then the poset $\ell_n^\kappa(j)$ is unique %Its unique maximal element has color $j$ and so it is indexed by the minuscule weight $\omega_j$ in type $\mathcal{L}_n$.
%This notation for the $\Gamma$-colored case is compatible with the uncolored case in the sense that the uncolored poset $\ell_n(j)$ is the underlying poset for $\ell_n^\kappa(j)$.
%\tcb{This is confirmed with \cite[Prop. 4.2]{BLPP} (noting Proctor uses the numbering of $\Gamma$ from \cite{Hum}, not \cite{Kac}).}
%\tcb{, as \cite[Prop. 4.2]{BLPP} confirms (noting Proctor uses the numbering of $\Gamma$ from \cite{Hum} instead of \cite{Kac})}.
%The set of connected finite $\Gamma$-colored minuscule posets produced in this fashion is in bijection with the set of uncolored minuscule posets via the map $\ell_n^\kappa(j) \mapsto \ell_n(j)$.

%\ni The entire list of uncolored minuscule posets can be produced in this fashion. 

%\ni \textcolor{red}{Change} As uncolored posets, this produces precisely the full list of minuscule posets of Proctor from the list of connected finite $\Gamma$-colored minuscule posets in Theorem 8.1(i).

\begin{figure}[t!]
    \centering
    \begin{tikzpicture}[scale=.65]
    
    \node[draw] (A1) at (10,10){1};
    \node[draw] (B1) at (11,9){2};
    \node[draw] (C1) at (12,8){3};
    \node[draw] (D1) at (13,7){4};
    \draw (A1) -- (B1) -- (C1) -- (D1);
    
    \node (11) at (10,5.5){1};
    \node (12) at (11,5.5){2};
    \node (13) at (12,5.5){3};
    \node (14) at (13,5.5){4};
    \draw (11) -- (12) -- (13) -- (14);
    
    \node[draw] (A2) at (16,9){1};
    \node[draw] (B21) at (17,10){2};
    \node[draw] (B22) at (17,8){2};
    \node[draw] (C21) at (18,9){3};
    \node[draw] (C22) at (18,7){3};
    \node[draw] (D2) at (19,8){4};
    \draw (A2) -- (B21) -- (C21) -- (B22) -- (A2) (C21) -- (D2) -- (C22) -- (B22);
    
    \node (21) at (16,5.5){1};
    \node (22) at (17,5.5){2};
    \node (23) at (18,5.5){3};
    \node (24) at (19,5.5){4};
    \draw (21) -- (22) -- (23) -- (24);
    
    \node[draw] (A3) at (22,9){4};
    \node[draw] (B31) at (23,10){3};
    \node[draw] (B32) at (23,8){3};
    \node[draw] (C31) at (24,9){2};
    \node[draw] (C32) at (24,7){2};
    \node[draw] (D3) at (25,8){1};
    \draw (A3) -- (B31) -- (C31) -- (B32) -- (A3) (C31) -- (D3) -- (C32) -- (B32);
    
    \node (31) at (22,5.5){4};
    \node (32) at (23,5.5){3};
    \node (33) at (24,5.5){2};
    \node (34) at (25,5.5){1};
    \draw (31) -- (32) -- (33) -- (34);
    
    \node[draw] (A4) at (28,10){4};
    \node[draw] (B4) at (29,9){3};
    \node[draw] (C4) at (30,8){2};
    \node[draw] (D4) at (31,7){1};
    \draw (A4) -- (B4) -- (C4) -- (D4);
    
    \node (41) at (28,5.5){4};
    \node (42) at (29,5.5){3};
    \node (43) at (30,5.5){2};
    \node (44) at (31,5.5){1};
    \draw (41) -- (42) -- (43) -- (44);
    
    % \node at (11.5,4.75){(a)};
    % \node at (17.5,4.75){(b)};
    % \node at (23.5,4.75){(c)};
    % \node at (29.5,4.75){(d)};
    
    \end{tikzpicture}
    \caption{From left to right: The $\Gamma$-colored minuscule posets $a_4^\kappa(1)$, $a_4^\kappa(2)$, $a_4^\kappa(3)$, and $a_4^\kappa(4)$.}
    \label{FigA4Example}
\end{figure}
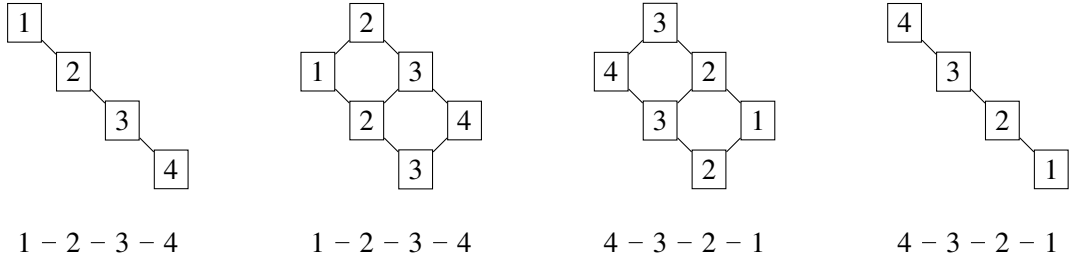

%There is exactly one poset $\ell_n^\kappa(j)$ up to $\Gamma$-colored poset isomorphism for each minuscule weight $\omega_j$ in type $\mathcal{L}_n$.
% \tcb{For instance, if  arises from both $\varphi_1,\varphi_2 \in \text{Aut}(\Gamma)$, then }
See Figure \ref{FigA4Example} for $a_4^\kappa(1),a_4^\kappa(2),a_4^\kappa(3)$, and $a_4^\kappa(4)$ in type $A_4$.
In this example, both $a_4^\kappa(1)$ and $a_4^\kappa(4)$ come from the type $A$ standard $\Gamma$-colored minuscule poset with four colors and both $a_4^\kappa(2)$ and $a_4^\kappa(3)$ come from the type $A$ exterior $\Gamma$-colored minuscule poset with four colors.  
Both $a_4^\kappa(1)$ and $a_4^\kappa(2)$ come from the identity automorphism on $\Gamma$ and both $a_4^\kappa(3)$ and $a_4^\kappa(4)$ come from the nonidentity automorphism on $\Gamma$.
Sometimes a redundant copy of a poset will be produced, as can be seen in type $A_5$ with $a_5^\kappa(3)$ arising 
%\tcb{from both $\varphi \in \text{Aut}(\Gamma)$}.
twice.
%from both the identity and non-identity isomorphism of $\Gamma$.

% Let $P$ be a connected finite $\Gamma$-colored minuscule poset.

% \ni Then $P$ is also a dominant minuscule heap by Theorem \ref{Thm9Str3}.

% \ni Then as in \cite[\S 3]{Ste}, there is a dominant integral weight $\lambda$ and a $\lambda$-minuscule \cite{Car} Weyl group element $w$ for which $P$ is the \textit{heap} of $w$.

%\tcr{New intro coroot paragraph, perhaps with remark about the surjectivity and coroot subsystem problem.}

Proctor's results discussed above first appeared in his 1980 thesis before appearing in Sections 3 and 4 of \cite{BLPP}.  
After reading this thesis, Robert Steinberg suggested exploring the relationship between the uncolored minuscule posets $\ell_n(j)$ of join irreducibles and certain sets of roots, at least in types $A$, $D$, and $E$.
In Theorem 11 of \cite{BLPP}, Proctor gave a general realization of each poset on his list with certain sets of \textit{coroots} in the corresponding finite Lie type.
Theorem 11 also contained a coloring of these coroots.
We recover these realizations for connected finite $\Gamma$-colored minuscule posets in Theorem \ref{ThmColoredCoroots} below.

%We show that connected finite $\Gamma$-colored minuscule posets can be realized as posets of \textit{coroots}, on which $W$ also acts.
Let $\left(\Phi^\vee\right)^+$ and $\left(\Phi^\vee\right)^-$ be the \textit{positive} and \textit{negative} coroots in type $\mathcal{L}_n$, respectively, and $\Phi^\vee := \left(\Phi^\vee\right)^+ \cup \left(\Phi^\vee\right)^-$.
The standard order on $\Phi^\vee$ is analogous to the root order described above.
Let $P$ be the connected finite $\Gamma$-colored minuscule poset $\ell_n^\kappa(j)$ of finite type $\mathcal{L}_n$.
Let $F$ be a nonempty filter of $P$.
Then $F$ is a $\Gamma'$-colored $d$-complete poset, where $\Gamma'$ is the (connected) subdiagram of $\Gamma$ with colors in $\kappa(F)$.  This subdiagram $\Gamma'$ corresponds to a coroot subsystem of $\Phi^\vee$ and a parabolic subgroup of $W$; we consider all Lie objects for $\Gamma'$ (e.g. coroots, Weyl group elements) to have type $\mathcal{L}_n$.  In other words, we are considering $F$ to be a $\Gamma$-colored $d$-complete poset even if its coloring is not surjective.

 In \cite{Ste}, Stembridge did not require dominant minuscule heaps to be colored surjectively outside of his classification in Section 4; we use only his Sections 3 and 5 below.  Thus as a $\Gamma$-colored $d$-complete poset, the filter $F$ from the previous paragraph is a dominant minuscule heap by Theorem \ref{Thm9Str3}.  As in \cite[\S 3]{Ste}, there is a dominant integral weight $\lambda$ and $\lambda$-minuscule \cite{Car} Weyl group element $w$ such that $F$ is the \textit{heap} of $w$.  All \textit{reduced expressions} of $w$ are produced from linear extensions of $F$ by recording the order of colors appearing in the linear extension and producing a word in $W$ as a product of the corresponding simple reflections.  Hence the length of $w$ is $l(w) = |F|$.  Since $W$ also acts on coroots, let $\Phi^\vee(w) := \left\{ \alpha^\vee \in \left(\Phi^\vee\right)^+ \ | \ w\alpha^\vee \in \left(\Phi^\vee\right)^-\right\}$ be the \textit{inversion set} of $w$; i.e. the positive coroots that become negative under the action of $w$.  

For use in the proof of Theorem \ref{ThmColoredCoroots}, we state the following lemma in the context just described.  We note that it holds more generally for any connected finite $\Gamma$-colored $d$-complete poset, which must have symmetrizable Kac--Moody type by Remark \ref{RemSymmetrizable}.  Recall by Lemma \ref{LemUniqueMaxElt} that $P$ has a unique maximal element.

%Our focus remains on finite Lie types, so we state the following lemma for $\mathcal{L}_n$.  However, we note it holds in any symmetrizable Kac--Moody type.  Recall by Lemma \ref{LemUniqueMaxElt} that $P$ has a unique maximal element.

%The following lemma will be used in the proof of Theorem \ref{ThmColoredCoroots} in the context just described, but it is stated an proved for an arbitrary connected finite $\Gamma$-colored $d$-complete poset $P$ and the coroot system $\Phi^\vee$ of $\Gamma$.  Part (a) of this lemma is \cite[Thm. 5.5]{Ste}.  Recall by Lemma \ref{LemUniqueMaxElt} that $P$ has a unique maximal element.

\begin{lemma}\label{LemdC-Coroots}
Let $P$ be the connected finite $\Gamma$-colored minuscule poset $\ell_n^\kappa(j)$.  Let $F$ be a nonempty filter of $P$ and let $w$ be the dominant $\lambda$-minuscule Weyl group element for which $F$ is its heap. 
\begin{enumerate}[(a),nosep]
\item The element $w$ is in $W^{\langle j \rangle}$.
\item \hspace{0in} The filter $F$ is dual isomorphic as an uncolored poset to $\Phi^\vee(w)$.  
\item Let $\Phi_j^\vee$ be the filter of $\left(\Phi^\vee\right)^+$ generated by $\alpha_j^\vee$.
%:= \left\{\alpha^\vee \in \left(\Phi^\vee\right)^+ \ | \ \alpha^\vee \ge \alpha_j^\vee \right\}$.  
Then $\Phi^\vee(w)$ is an order ideal of $\Phi_j^\vee$.
\end{enumerate} 
\end{lemma}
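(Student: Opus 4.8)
The plan is to prove the three parts largely independently: (a) and (c) follow from the structure of $P = \ell_n^\kappa(j)$ together with standard facts about minimal-length coset representatives and root systems, while (b) is the general heap/inversion-set dictionary for fully commutative elements. Throughout write $a_0$ for the unique maximal element of $P$ (Lemma \ref{LemUniqueMaxElt}); by construction of $\ell_n^\kappa(j)$ one has $\kappa(a_0) = j$. Given a nonempty filter $F$, let $w$ be the dominant $\lambda$-minuscule element with heap $F$, and recall that the reduced expressions of $w$ are exactly the words obtained by reading colors along linear extensions of $F$. For \emph{part (a)}, I would first observe that every element maximal in $F$ is maximal in $P$: if $x$ is maximal in $F$ and $y > x$ in $P$, then $y \in F$ since $F$ is a filter, contradicting maximality. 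As $a_0$ is the only maximal element of $P$ and $F \neq \emptyset$, the unique maximal element of $F$ is $a_0$, of color $j$. Since the last letter of any reduced expression of $w$ is the color of a maximal element of $F$, the only simple reflection $s_i$ with $l(w s_i) < l(w)$ is $s_j$; equivalently $l(w s_i) > l(w)$ for all $i \neq j$, which is precisely the condition $w \in W^{\langle j \rangle}$.

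For \emph{part (b)}: since $w$ is $\lambda$-minuscule it is fully commutative, so its heap $F$ and inversion set $\Phi^\vee(w)$ are well defined, and I would invoke the standard correspondence between them (see \cite[\S 3]{Ste}). Fixing a linear extension $x_1, \dots, x_l$ of $F$ with associated reduced word $w = s_{c_1} \cdots s_{c_l}$, where $c_k = \kappa(x_k)$, the map $x_k \mapsto s_{c_l} s_{c_{l-1}} \cdots s_{c_{k+1}}(\alpha_{c_k}^\vee)$ is a bijection $F \to \Phi^\vee(w)$; it is independent of the chosen linear extension (swapping two adjacent entries amounts to commuting non-adjacent simple reflections, which fixes the relevant simple coroot) and it reverses the standard order on coroots, giving the dual isomorphism $F \cong \Phi^\vee(w)^*$ of uncolored posets. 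If a self-contained argument is preferred, this may be established by induction on $l(w)$, removing a maximal element of $F$ and tracking the effect of the corresponding reflection on both sides.

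For \emph{part (c)}, I would check two things: $\Phi^\vee(w) \subseteq \Phi_j^\vee$, and that $\Phi^\vee(w)$ is downward closed in $\Phi_j^\vee$. For the containment, recall that $w \in W^{\langle j \rangle}$ forces $w\alpha^\vee > 0$ for every positive coroot $\alpha^\vee$ whose support avoids $j$ (the standard fact that a minimal coset representative sends the positive roots of the parabolic subsystem to positive roots, transported to coroots via $w(\alpha^\vee) = (w\alpha)^\vee$). Hence any $\alpha^\vee \in \Phi^\vee(w)$ has nonzero, hence positive, coefficient on $\alpha_j^\vee$, and since $\omega_j$ is minuscule this coefficient equals $1$ (as $\langle \omega_j, \alpha_i^\vee \rangle = \delta_{ij}$ and minuscule weights pair with positive coroots to give $0$ or $1$); thus $\alpha^\vee \ge \alpha_j^\vee$, i.e. $\alpha^\vee \in \Phi_j^\vee$. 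For downward closure, let $\beta^\vee \in \Phi^\vee(w)$ and $\gamma^\vee \in \Phi_j^\vee$ with $\gamma^\vee \le \beta^\vee$. Both coroots have $\alpha_j^\vee$-coefficient $1$, so $\beta^\vee - \gamma^\vee$ is a non-negative integer combination of the simple coroots $\alpha_i^\vee$ with $i \neq j$; applying $w$ and using $w\alpha_i^\vee > 0$ for each such $i$ shows $w(\beta^\vee - \gamma^\vee)$ has all coordinates $\ge 0$. Then $w\gamma^\vee = w\beta^\vee - w(\beta^\vee - \gamma^\vee)$ has all coordinates $\le 0$ and is a nonzero coroot, hence is negative, so $\gamma^\vee \in \Phi^\vee(w)$, as needed.

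I expect the main obstacle to be (b): locating and correctly citing the heap/inversion-set dictionary, in particular confirming that the natural bijection reverses order rather than preserving it (a point sensitive to conventions for both heaps and inversion sets). By contrast (a) is immediate from the unique maximal element of $P$, and (c) reduces, once (a) supplies $w \in W^{\langle j \rangle}$, to elementary coefficient bookkeeping using the minuscule pairing $\langle \omega_j, \alpha_i^\vee \rangle = \delta_{ij}$.
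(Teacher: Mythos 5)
Your proof is correct. For (a) and (b) you follow essentially the paper's route: (a) rests on the observation that the unique maximal element of $F$ must be the unique maximal element of $P$, which has color $j$, so every reduced word of $w$ ends in $s_j$ and hence the right descent set is $\{s_j\}$ (the paper cites \cite[Lem.~2.4.3, Cor.~2.4.5(i)]{BjBr} for this last equivalence); for (b) the paper simply invokes \cite[Thm.~5.5]{Ste}, which is the heap/inversion-set dictionary you describe, and your explicit telescoping bijection matches the coroot sequence the paper writes down later in the proof of Theorem \ref{ThmColoredCoroots} --- the only content you defer to the citation is that the bijection is order-reversing in both directions, which is exactly what Stembridge's theorem supplies. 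Part (c) is where you genuinely diverge. The paper derives (c) from (b): the unique maximal element of $F$ corresponds to a unique minimal element of $\Phi^\vee(w)$, which must be $\alpha_j^\vee$ by \cite[\S 10.2, Cor.~C]{Hum} because every reduced word ends in $s_j$, and then convexity of inversion sets \cite[Rem.~5.6(a)]{Ste} forces $\Phi^\vee(w)$ to be an ideal of $\Phi_j^\vee$. You instead argue directly with coroot coefficients: the parabolic characterization of $W^{\langle j \rangle}$ (which the paper also uses, citing \cite[Exer.~1.3.E]{Kum}, but only later, in the proof of Theorem \ref{ThmColoredCoroots}(b)) gives the containment $\Phi^\vee(w) \subseteq \Phi_j^\vee$, and the minuscule pairing $\langle \omega_j, \beta^\vee \rangle \in \{0,1\}$ gives downward closure. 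Both arguments are sound; the trade-off is that your (c) uses the minuscule hypothesis essentially (the $j$-coefficient being exactly $1$ is what places $\beta^\vee - \gamma^\vee$ in the nonnegative span of the $\alpha_i^\vee$ with $i \ne j$), whereas the paper's convexity argument applies verbatim to any connected finite $\Gamma$-colored $d$-complete poset, which is why the paper can remark that the lemma holds in that greater generality.
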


\begin{proof}
Every linear extension of $F$ ends with the unique maximal element of $P$ colored $j$, so every reduced expression of $w$ ends in $s_j$.  Using this fact, Lemma 2.4.3 and Corollary 2.4.5(i) of \cite{BjBr} show that $w \in W^{\langle j \rangle}$, proving (a). 
Theorem 5.5 of \cite{Ste}, which holds for any symmetrizable Kac--Moody type, implies (b).  For (c), note using (b) that the unique maximal element of $F$ corresponds to a unique minimal element of $\Phi^\vee(w)$.  Since every reduced expression for $w$ ends in $s_j$, Corollary C of \cite[\S 10.2]{Hum} shows that $\alpha_j^\vee \in \Phi^\vee(w)$.  Hence $\alpha_j^\vee$ must be the unique minimal element of $\Phi^\vee(w)$ since it is minimal in $\left(\Phi^\vee\right)^+$.  Since $\Phi^\vee(w)$ is a convex subposet of coroots \cite[Rem. 5.6(a)]{Ste}, we see that $\Phi^\vee(w)$ is an ideal of $\Phi_j^\vee$. 
\end{proof}

We now obtain the realization of $P = \ell_n^\kappa(j)$ as a poset of colored coroots.  Recall by Lemma \ref{LemUniqueMaxElt} that $P$ has a unique minimal element.

\begin{theorem}\label{ThmColoredCoroots}
Let $P$ be the connected finite $\Gamma$-colored minuscule poset $\ell_n^\kappa(j)$.  For every $x \in P$, we define $F_x$ to be the filter generated by $x$ and $w_x$ to be the dominant $\lambda$-minuscule Weyl group element for which $F_x$ is its heap.  Let $\Phi_j^\vee$ be the filter of $\left(\Phi^\vee\right)^+$ generated by $\alpha_j^\vee$; its unique maximal element is the highest coroot in $\Phi^\vee$. 
\begin{enumerate}[(a),nosep]
    \item For all $x \in P$, the filter $F_x$ is dual isomorphic as an uncolored poset to $\Phi^\vee(w_x)$.  This poset $\Phi^\vee(w_x)$ of coroots is an ideal of $\Phi_j^\vee$ generated by a single element, which we denote $\gamma_x^\vee$.
    \item If $x$ is the unique minimal element of $P$, then $\Phi^\vee(w_x) = \Phi_j^\vee$ and $w_x$ is the longest element $w_0^{\langle j \rangle}$ of $W^{\langle j \rangle}$.
    %\item The poset $P$ is dual isomorphic as an uncolored poset to $\Phi_j^\vee$.
    \item The map $\psi : P \to \Phi_j^\vee$ defined by $\psi(x) := \gamma_x^\vee$ for all $x \in P$ is a dual isomorphism of uncolored posets.
    \item The map $\psi$ induces a coloring $\kappa_\psi : \Phi_j^\vee \to \Gamma$ of $\Phi_j^\vee$ defined by $\kappa_\psi\left(\gamma_x^\vee\right) := \kappa(x)$.  Under this coloring, the posets $P$ and $\Phi_j^\vee$ are dual isomorphic as $\Gamma$-colored posets.  Hence $\Phi_j^\vee$ is $\Gamma$-colored minuscule.
\end{enumerate}
\end{theorem}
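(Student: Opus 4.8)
The plan is to read off~(a) from Lemma~\ref{LemdC-Coroots}, to prove~(b) by identifying the relevant Weyl group element with $w_0^{\langle j \rangle}$ and computing its inversion set, to deduce~(c) by showing that $\psi$ is the canonical dual isomorphism of Stembridge's Theorem~5.5, and finally to obtain~(d) formally. For~(a): by Lemma~\ref{LemdC-Coroots}(b) the filter $F_x$ is dual isomorphic as an uncolored poset to $\Phi^\vee(w_x)$, and by Lemma~\ref{LemdC-Coroots}(c) the latter is an order ideal of $\Phi_j^\vee$. As $F_x$ is a principal filter it has a unique minimal element (namely $x$), so $\Phi^\vee(w_x)$ has a unique maximal element; in a finite poset an order ideal with a unique maximal element is the principal ideal generated by that element, which we name $\gamma_x^\vee$, so that $\psi(x) := \gamma_x^\vee$.

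For~(b) I would take $x$ to be the unique minimal element of $P$, which exists by Lemma~\ref{LemUniqueMaxElt}; then $F_x = P$, and $w := w_x$ has heap $P$, hence has length $|P|$. By Lemma~\ref{LemdC-Coroots}(a), $w \in W^{\langle j \rangle}$. Since $W^{\langle j \rangle}$, under Bruhat order, is graded by length and dual isomorphic to the distributive lattice $J(\ell_n(j))$, its longest element $w_0^{\langle j \rangle}$ has length equal to the length of a maximal chain of $J(\ell_n(j))$, namely $|\ell_n(j)| = |P|$; as $w \le w_0^{\langle j \rangle}$ in Bruhat order and the two lengths agree, $w = w_0^{\langle j \rangle}$. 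It remains to show $\Phi^\vee(w_0^{\langle j \rangle}) = \Phi_j^\vee$. Writing $w_0^{\langle j \rangle} = w_0 \, w_{0, \langle j \rangle}$, with $w_0$ and $w_{0, \langle j \rangle}$ the longest elements of $W$ and $W_{\langle j \rangle}$, I would argue: if $\beta^\vee \ge \alpha_j^\vee$ is a positive coroot then $\langle \omega_j, \beta^\vee \rangle = 1$ because $\omega_j$ is minuscule, so $w_{0, \langle j \rangle} \beta^\vee$ is a coroot still paired to $1$ with $\omega_j$ (as $w_{0, \langle j \rangle}$, a product of $s_i$ with $i \neq j$, fixes $\omega_j$), hence is positive, hence is sent to a negative coroot by $w_0$; thus $\beta^\vee \in \Phi^\vee(w_0^{\langle j \rangle})$. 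Together with the containment of Lemma~\ref{LemdC-Coroots}(c) this yields the desired equality, and part~(b) follows.

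For~(c) the key observation is that $\psi$ agrees with the dual isomorphism $\Psi_0 \colon P = F_{x_{\min}} \to \Phi^\vee(w_{x_{\min}}) = \Phi_j^\vee$ obtained by applying Theorem~5.5 of \cite{Ste} to $w_{x_{\min}} = w_0^{\langle j \rangle}$ (using part~(b) for the last equality). To see this, note that the underlying inversion correspondence is compatible with passing to principal filters: for $x \in P$, the set $F_x$ is a filter of $P$, so one may choose a linear extension of $P$ whose terminal segment is a linear extension of $F_x$; the associated reduced word of $w_0^{\langle j \rangle}$ then has a suffix that is a reduced word of $w_x$, and the inversion labelling computing $\Phi^\vee(w_0^{\langle j \rangle})$ restricts on that suffix to the labelling computing $\Phi^\vee(w_x)$. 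Hence $\Psi_0$ restricts to the dual isomorphism $F_x \to \Phi^\vee(w_x)$ of Lemma~\ref{LemdC-Coroots}(b), so $\Psi_0(x)$ is the image of the minimum of $F_x$, namely the maximum $\gamma_x^\vee$ of $\Phi^\vee(w_x)$. Thus $\psi = \Psi_0$, a dual isomorphism of uncolored posets.

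For~(d) set $\kappa_\psi := \kappa \circ \psi^{-1}$; it is well defined since $\psi$ is a bijection, and $\kappa_\psi \circ \psi = \kappa$, so $\psi$ is a dual isomorphism of $\Gamma$-colored posets, i.e. $\Phi_j^\vee \cong P^*$ as $\Gamma$-colored posets. Since the order dual of a $\Gamma$-colored minuscule poset is $\Gamma$-colored minuscule, $\Phi_j^\vee$ is $\Gamma$-colored minuscule. I expect the main obstacle to be part~(b): correctly identifying $w_{x_{\min}}$ with $w_0^{\langle j \rangle}$ and then carrying out the coroot computation for $\Phi^\vee(w_0^{\langle j \rangle})$, which brings in facts about coroot orders, longest elements, and the minuscule property not developed in the paper; the filter-compatibility used in~(c), while routine once Stembridge's construction is unpacked, also requires care.
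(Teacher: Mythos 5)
Your proposal is correct, and parts (a) and (d) follow the paper essentially verbatim; but your arguments for (b) and (c) take genuinely different routes. For (b), the paper first proves $\Phi^\vee(w_x)=\Phi_j^\vee$ by a case-by-case count: the number of ranks of $P$ (read off the classified Hasse diagrams) equals the height of the highest coroot in each finite type, so $\gamma_x^\vee$ is the highest coroot; it then deduces $w_x=w_0^{\langle j\rangle}$ from $l(w_x)=|\Phi_j^\vee|$ together with the characterization of $W^{\langle j\rangle}$ as the elements keeping $\left(\Phi^\vee\right)^+-\Phi_j^\vee$ positive. You reverse the order and avoid the case analysis: you identify $w_x$ with $w_0^{\langle j\rangle}$ by a length count in the Bruhat poset $W^{\langle j\rangle}$, and then compute $\Phi_j^\vee\subseteq\Phi^\vee(w_0^{\langle j\rangle})$ uniformly from the factorization $w_0^{\langle j\rangle}=w_0\,w_{0,\langle j\rangle}$ and the minuscule pairing $\langle\omega_j,\beta^\vee\rangle=1$. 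Your version buys type-uniformity, but it imports more outside Lie theory and, in the length count, leans on Proctor's dual isomorphism $W^{\langle j\rangle}\cong J(\ell_n(j))^*$ together with the identification of the underlying uncolored poset of $\ell_n^\kappa(j)$ with $\ell_n(j)$ --- a compatibility the paper asserts in its notational setup but never proves, whereas its own height argument stays inside the data of the classification. For (c), the paper argues directly: injectivity via distinct inversion sets, and order-reversal by exhibiting $\gamma_x^\vee$ as the last coroot of the explicit sequence (\ref{EqCoroots}) attached to a linear extension of $F_x$, then extending a linear extension of $F_y$ to one of $F_x$; you instead identify $\psi$ with Stembridge's dual isomorphism for $w_0^{\langle j\rangle}$ restricted along suffixes of reduced words. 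The suffix-compatibility you invoke is exactly the computation the paper performs with (\ref{EqCoroots}), so the two arguments are equivalent in substance, with yours packaging it more structurally at the cost of unpacking Theorem 5.5 of \cite{Ste}.
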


\begin{proof}
Let $x \in P$.  We apply Lemma \ref{LemdC-Coroots} to the filter $F_x$ and Weyl group element $w_x$.  Part (b) shows that $F_x$ is dual isomorphic as an uncolored poset to $\Phi^\vee(w_x)$ and Part (c) shows that $\Phi^\vee(w_x)$ is an ideal of $\Phi_j^\vee$.  Since $F_x$ has a unique minimal element, this shows $\Phi^\vee(w_x)$ has a unique maximal element, proving (a).  As in the statement, we denote this element by $\gamma_x^\vee$.

Now suppose $x$ is the unique minimal element of $P$, so that $F_x = P$.  Since the unique minimal element of $\Phi^\vee(w_x)$ has \textit{height} 1 and since $P$ and $\Phi^\vee(w_x)$ are dual isomorphic, the largest coroot height appearing in $\Phi^\vee(w_x)$ is given by the number of ranks of $P$.  Note that $P$ is one of the posets displayed in Figure \ref{FigFiniteMinusculeHasseDiagrams}.  By inspecting these Hasse diagrams, this largest coroot height is $n$, $2n-1$, $2n-1$, $2n-3$, $11$, and $17$ in types $A_n$, $B_n$, $C_n$, $D_n$, $E_6$, and $E_7$, respectively.  These are the heights of the unique highest coroot in each respective type, and so $\Phi^\vee(w_x)$ contains the highest coroot.  That is, the highest coroot is $\gamma_x^\vee$ and so $\Phi^\vee(w_x) = \Phi_j^\vee$.

We note that $w_x \in W^{\langle j \rangle}$ by Lemma \ref{LemdC-Coroots}(a).  An alternate characterization of $W^{\langle j \rangle}$ is the set of Weyl group elements whose actions on the coroots in $\left(\Phi^\vee\right)^+ - \Phi_j^\vee$ remain positive; see \cite[Exer. 1.3.E]{Kum}.  Thus $\Phi^\vee\left(w_0^{\langle j \rangle}\right) \subseteq \Phi_j^\vee$ for the longest element $w_0^{\langle j \rangle}$ of $W^{\langle j \rangle}$.  Since Weyl group length satisfies $l(u) = |\Phi^\vee(u)|$ for every $u \in W$, it follows that $l\left(w_0^{\langle j \rangle}\right) \le |\Phi_j^\vee|$.  Hence $l(w_x) = \left|\Phi^\vee(w_x)\right| = \left|\Phi_j^\vee\right|$ shows that $w_x$ has maximum possible length in $W^{\langle j \rangle}$, i.e. $w_x = w_0^{\langle j \rangle}$.  This finishes the proof of (b).

%Once again let $x$ be the unique minimal element of $P$, so that $F_x = P$.  Since $P$ and $\Phi^\vee(w_x)$ are dual isomorphic and $\Phi^\vee(w_x) = \Phi_j^\vee$ by (b), we get (c).

%Since $|P| = \left| \Phi_j^\vee \right|$, 
The second paragraph of this proof shows that $P$ is dual isomorphic to $\Phi_j^\vee$.  Hence $|P| = \left| \Phi_j^\vee \right|$, and so to show $\psi$ is a bijection it suffices to show it is injective.  Suppose that $x,y \in P$ with $\psi(x) = \psi(y)$.  Since the ideals $\Phi^\vee(w_x)$ and $\Phi^\vee(w_y)$ of $\Phi_j^\vee$ are respectively generated by $\gamma_x^\vee$ and $\gamma_y^\vee$, which are equal by assumption, this shows that $\Phi^\vee(w_x) = \Phi^\vee(w_y)$.  Hence $w_x = w_y$ since distinct Weyl group elements have distinct inversion sets.  Since reduced expressions for $w_x$ and $w_y$ are respectively produced from linear extensions of the filters $F_x$ and $F_y$, this shows $F_x = F_y$.  Thus $x = y$ and so $\psi$ is injective.

Now let $x$ be any element of $P$; we produce an explicit realization of $\Phi^\vee(w_x)$.  Suppose $|F_x| = f$ and let $x_f := x \to \cdots \to x_1$ be any linear extension of $F_x$.  Set $i_k := \kappa(x_k)$ for $1 \le k \le f$.  Then $s_{i_f} \cdots s_{i_1}$ is a reduced expression for $w_x$ and $\Phi^\vee(w_x)$ is given by
\begin{align}\label{EqCoroots}
    \alpha_{i_1}^\vee, \ \ \ s_{i_1}\left(\alpha_{i_2}^\vee \right), \ \ \ \dots \ \ \ , s_{i_1} \cdots s_{i_{f-2}}\left(\alpha_{i_{f-1}}^\vee \right), \ \ \ s_{i_1} \cdots s_{i_{f-1}} \left(\alpha_{i_f}^\vee \right);
\end{align}
for example, see \cite[Exer. 5.6.1]{Hum2}.  If $f = 1$, then $x$ is the unique maximal element of $P$ and so $\gamma_x^\vee = \alpha_j^\vee = \alpha_{i_1}^\vee$.  If  $f > 1$, then this realization can be repeated with the filter $F' := F - \{x\}$ and its corresponding Weyl group element $w' := s_{i_{f-1}} \cdots s_{i_1}$, producing $\Phi^\vee\left(w'\right)$ as the first $f-1$ coroots in sequence (\ref{EqCoroots}).  
%Hence $\Phi^\vee(w') = \Phi^\vee(w_x) - \left\{ s_{i_1} \cdots s_{i_{f-1}}\left( \alpha_{i_f}^\vee \right) \right\}$.
Since $\Phi^\vee\left(w'\right)$ is an ideal of $\Phi_j^\vee$ by Lemma \ref{LemdC-Coroots}(c), we have $\Phi^\vee(w') = \Phi^\vee(w_x) - \left\{ \gamma_x^\vee \right\}$.  So $\gamma_x^\vee$ is the final coroot in (\ref{EqCoroots}).

Suppose that $x \le y$ in $P$.  Form a linear extension of $F_y$ and extend it to a linear extension of $F_x$.  By the previous paragraph, this linear extension for $F_x$ produces a coroot sequence as in (\ref{EqCoroots}) culminating in $\gamma_x^\vee$.  The coroot in position $|F_y|$ of this sequence is $\gamma_y^\vee$ (also by the preceding paragraph), so $\gamma_y^\vee \in \Phi^\vee(w_x)$.  Since $\gamma_x^\vee$ is the unique maximal element of $\Phi^\vee(w_x)$, we see that $\gamma_x^\vee \ge \gamma_y^\vee$.  Hence $\psi$ is an order reversing bijection.  Since we know that $P$ is dual isomorphic to $\Phi_j^\vee$, it follows that $\psi$ must be a dual isomorphism; i.e. that $\gamma_x^\vee \ge \gamma_y^\vee$ implies $x \le y$ as well.  This proves (c).

Let $\text{id}_\Gamma : \Gamma \to \Gamma$ be the identity automorphism.  Then $\kappa_\psi \psi = \text{id}_\Gamma \kappa$ by definition, so $P$ and $\Phi_j^\vee$ are dual isomorphic as $\Gamma$-colored posets, proving (d).
\end{proof}

\begin{remark}\label{RemExtensionByCoroots}
\begin{enumerate}[(a),nosep]
    \item Let $P$ be a connected finite $\Gamma$-colored $d$-complete poset and suppose $\Gamma$ has finite Lie type $\mathcal{L}_n$.  Suppose that $j$ is the color of the unique maximal element of $P$.  Lemma \ref{LemdC-Coroots} and Theorem \ref{ThmColoredCoroots} can be viewed as showing that the downward color extension process used in Section \ref{SectionExtending} to produce new $\Gamma$-colored $d$-complete posets corresponds to lengthening elements of $W^{\langle j \rangle}$ by multiplying by simple reflections on the left and to growing ideals of $\Phi_j^\vee$ upwardly.  These parallel extensions respectively produce $\ell_n^\kappa(j)$, $w_0^{\langle j \rangle}$, and $\Phi_j^\vee$ when the process terminates with a $\Gamma$-colored minuscule poset.
    
    \item 
    %The dual isomorphism and coloring of Theorem \ref{ThmColoredCoroots}(c)--(e) is a version of 
    The coloring of $\Phi_j^\vee$ given in Theorem \ref{ThmColoredCoroots}(d) can be seen to be the coloring given by Proctor in \cite[Thm. 11]{BLPP} 
    after adjusting for differing conventions.  Proctor was not using the axiomatically defined poset $\ell_n^\kappa(j)$, so he instead produced the Weyl group words and coroot actions appearing in Section 11 of \cite{BLPP} using the numbers game.  These colored sets of coroots were the first appearance of colored minuscule posets, as noted in the top left corner of Table \ref{RepClass} displayed in the introduction.
\end{enumerate}
\end{remark}

\begin{figure}[t!]
    \centering
    \begin{tikzpicture}[scale=1.25]
        \node(1) at (0,0){{\footnotesize $\alpha_1^\vee$}};
        \node(2) at (2,0){{\footnotesize $\alpha_2^\vee$}};
        \node(3) at (4,0){{\footnotesize $\alpha_3^\vee$}};
        \node(4) at (6,0){{\footnotesize $\alpha_4^\vee$}};
        \node(12) at (1,1){{\footnotesize $\alpha_1^\vee + \alpha_2^\vee$}};
        \node(23) at (3,1){{\footnotesize $\alpha_2^\vee + \alpha_3^\vee$}};
        \node(34) at (5,1){{\footnotesize $\alpha_3^\vee + \alpha_4^\vee$}};
        \node(123) at (2,2){{\footnotesize $\alpha_1^\vee + \alpha_2^\vee + \alpha_3^\vee$}};
        \node(234) at (4,2){{\footnotesize $\alpha_2^\vee + \alpha_3^\vee + \alpha_4^\vee$}};
        \node(1234) at (3,3){{\footnotesize $\alpha_1^\vee + \alpha_2^\vee + \alpha_3^\vee + \alpha_4^\vee$}};
        
        \node(2a) at (2,-.3){{\footnotesize $2$}};
        \node(3a) at (3,0.7){{\footnotesize $3$}};
        \node(1a) at (1,0.7){{\footnotesize $1$}};
        \node(4a) at (4,1.7){{\footnotesize $4$}};
        \node(2b) at (2,1.7){{\footnotesize $2$}};
        \node(3b) at (3,2.7){{\footnotesize $3$}};
        
        \node(gamma1) at (2,0.3){{\footnotesize $\gamma_u^\vee$}};
        \node(gamma2) at (3,1.3){{\footnotesize $\gamma_w^\vee$}};
        \node(gamma3) at (1,1.3){{\footnotesize $\gamma_v^\vee$}};
        \node(gamma4) at (4,2.3){{\footnotesize $\gamma_y^\vee$}};
        \node(gamma5) at (2,2.3){{\footnotesize $\gamma_x^\vee$}};
        \node(gamma6) at (3,3.3){{\footnotesize $\gamma_z^\vee$}};
        
        \draw (1) -- (12) -- (2) -- (23) -- (3) -- (34) -- (4) (12) -- (123) -- (23) -- (234) -- (34) (123) -- (1234) -- (234);
        
        \draw[dashed] (2,-1.25) -- (-0.25,1) -- (3,4.25) -- (5.25,2) -- (2,-1.25);
        
        \node(c1)[draw] at (-3,0){{\footnotesize $z_3$}};
        \node(d1)[draw] at (-2,1){{\footnotesize $y_4$}};
        \node(b1)[draw] at (-4,1){{\footnotesize $x_2$}};
        \node(a1)[draw] at (-5,2){{\footnotesize $v_1$}};
        \node(b2)[draw] at (-4,3){{\footnotesize $u_2$}};
        \node(c2)[draw] at (-3,2){{\footnotesize $w_3$}};
        
        \node(1) at (-5,-1){{\footnotesize $1$}};
        \node(2) at (-4,-1){{\footnotesize $2$}};
        \node(3) at (-3,-1){{\footnotesize $3$}};
        \node(4) at (-2,-1){{\footnotesize $4$}};
        
        \draw (1) -- (2) -- (3) -- (4);
        
        \draw (b1) -- (c1) -- (d1) -- (c2) -- (b2) -- (a1) -- (b1) -- (c2);
    \end{tikzpicture}
    \caption{The $\Gamma$-colored minuscule poset $a_4^\kappa(2)$ is displayed on the left, with colors displayed as subscripts.  Positive coroots of type $A_4$ are displayed on the right, with the set $\Phi_2^\vee$ in the dashed box.}
    %For each element $x$ on the left, the corresponding coroot $\gamma_x^\vee$ from Theorem \ref{ThmColoredCoroots} is displayed using its expression as the final coroot in sequence (\ref{EqCoroots}) using a linear extension of the filter $F_x$.  Colors given by $\kappa_\psi$ are displayed under each coroot in $\Phi_2^\vee$.  The $\Gamma$-colored posets $a_4^\kappa(2)$ and $\Phi_2^\vee$ are dual isomorphic.}
    \label{FigA4Coroots}
\end{figure}
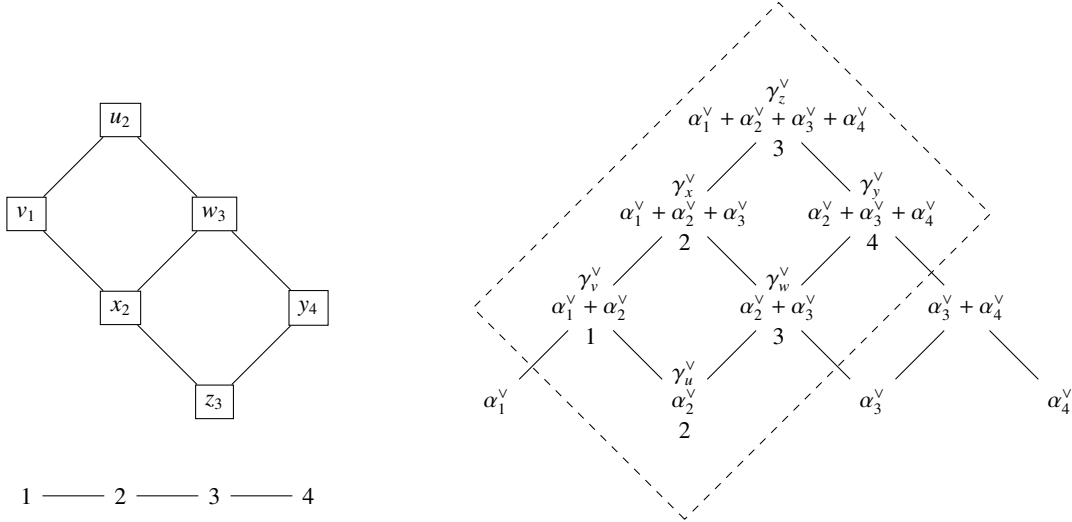

See Figure \ref{FigA4Coroots} for an example consisting of the $\Gamma$-colored minuscule poset $a_4^\kappa(2)$ and the dual isomorphic poset $\Phi_2^\vee$ in type $A_4$.
As in Theorem \ref{ThmColoredCoroots}, each element $s \in a_4^\kappa(2)$ maps to $\gamma_s^\vee$ under the dual isomorphism $\psi$; 
a linear extension of $F_s$ gives a reduced expression for $w_s$, which recovers $\gamma_s^\vee$ as the last coroot of the sequence (\ref{EqCoroots}).  For example, the linear extension $z \to y \to x \to w \to v \to u$ of $F_z$ produces the Weyl group element $w_z = s_3s_4s_2s_3s_1s_2$, which is the longest element $w_0^{\langle 2 \rangle}$ in $W^{\langle 2 \rangle}$.
Then $\gamma_z^\vee = s_2s_1s_3s_2s_4\left(\alpha_3^\vee\right) = \alpha_1^\vee + \alpha_2^\vee + \alpha_3^\vee + \alpha_4^\vee$.
Colors for $\Phi_2^\vee$ given by $\kappa_\psi$ are displayed under each coroot in the figure.  

If one is only interested in uncolored posets, then there is no need to dualize since uncolored minuscule posets are self-dual.  
Likewise, in the uncolored minuscule poset context, 
one may identify the poset $\ell_n(j)$ with the filter $\Phi_j$ of roots generated by $\alpha_j$ in type $\mathcal{L}_n$; for example, see \cite[Thm. 8.3.10]{Gre}.
But in the case of roots, the analogous inversion set $\Phi(w_s)$ for $s \in \ell_n^\kappa(j)$ is not necessarily an ideal of $\Phi_j$ in multiply laced types, and so attempting to color $\Phi_j$ as was done in Theorem \ref{ThmColoredCoroots} to produce a $\Gamma$-colored minuscule poset fails.  One may use the poset $b_2^\kappa(2)$ and roots $\Phi_2$ in type $B_2$ as an example.
%For example, one may use linear extensions of $b_2^\kappa(2)$ to produce Weyl group elements in $W^{\langle 2 \rangle}$ for type $B_2$, but coloring
%This filter of roots generated by $\alpha_j$ is equal to the analogous set $\Phi_j$ of roots. 
%analogous constructions can be made with roots instead of coroots.  
%However, one can see that coloring $\Phi_j$ following the process described above for roots instead of coroots does not result in $\Gamma$-colored minuscule posets in multiply laced types.  For example, coloring the roots $\Phi_2$ in type $B_2$ this way does not result in a poset that is dual isomorphic as a $\Gamma$-colored poset to $b_2^\kappa(2)$.

%$\Phi\left(w_0^{\langle 2 \rangle}\right)$ of roots in type $B_2$ does not result in a $\Gamma$-colored poset that is dual isomorphic to $b_2^\kappa(2)$, so these coloring constructions do not result in $\Gamma$-colored isomorphisms in multiply laced types. 

Connected finite $\Gamma$-colored minuscule posets also appear as the \textit{principal subheaps} of Green's full heaps (see  \cite[Def. 5.5.3]{Gre}). 
His notation for full heaps in Theorem 6.6.2 of \cite{Gre}, which we cite in our classification Theorem \ref{ThmMinusculeClassify}(ii), uses this association.  
Let $\varepsilon : \text{FH}(\Gamma(j)) \to \Gamma$ be one of the connected full heaps from Theorem 6.6.2.  
Then $\Gamma$ is a connected Dynkin diagram of affine Lie type, and deleting the node labeled 0 (again labeled by \cite[\S 4.8]{Kac}) produces a Dynkin diagram $\Gamma_0$ of finite Lie type $\mathcal{L}_n$ for $\mathcal{L} \in \{A,B,C,D,E\}$ and $n := |\Gamma| - 1$.  
The principal subheap of this full heap appears in an infinitely repeating motif within its Hasse diagram and is isomorphic as a $\Gamma_0$-colored poset to $\ell_n^\kappa(j)$.
Chapter 6 of \cite{Gre} contains figures of principal subheaps embedded within Hasse diagrams of full heaps.  
For example, the principal subheap displayed in \cite[Fig. 6.12]{Gre} is the connected finite $\Gamma$-colored minuscule poset $e_6^\kappa(5)$ (cf. Figure \ref{FigMinSL}(d)).
Hence by applying Theorem 8.1 we see that connected finite $\Gamma$-colored minuscule posets are embedded within connected infinite $\Gamma$-colored minuscule posets in infinitely repeating motifs.  
Each connected finite $\Gamma$-colored minuscule poset is embedded in at least one connected infinite $\Gamma$-colored minuscule poset. 

%Thus connected finite $\Gamma$-colored minuscule posets coincide with Proctor's colored minuscule posets and Green's principal subheaps of full heaps.  Moreover, each can be realized alternatively as a poset of (colored) coroots under the order dual of the standard coroot order.

Finally, as described in Section \ref{SectionMainResults}, the $\Gamma$-colored minuscule posets are necessary and sufficient to build $P$-minuscule Kac--Moody representations from colored posets. 
Influenced by Proctor, Stembridge, and Green, this author developed the defining axioms for $\Gamma$-colored $d$-complete and $\Gamma$-colored minuscule posets as part of his doctoral work under Proctor when examining which poset coloring properties were required to satisfy which Lie bracket relations under the actions of the colored raising and lowering operators, and vice versa.
The full results of this pursuit can be found in \cite{Str,Unify}.  
Here we note that the connected finite $\Gamma$-colored minuscule poset $\ell_n^\kappa(j)$ produces the $P$-minuscule representation with basis $\{\langle F,I \rangle \ | \ (F,I) \in \mathcal{FI}(P)\}$, and this representation is isomorphic to the minuscule representation $V(\omega_j)$ in type $\mathcal{L}_n$.
%of the simple Lie algebra $\mathfrak{g}$ of type $\mathcal{L}_n$ with highest weight $\omega_j$.  
The weight diagram of $V(\omega_j)$ under the standard root order on weights has structure isomorphic to $\mathcal{FI}(P)$ ordered under inclusion of ideals within the splits.  
The actions of the generators $\{x_i,y_i,h_i\}_{1 \le i \le n}$ are given by the operators $\{X_i, Y_i, H_i\}_{1 \le i \le n}$ described in Section \ref{SectionMainResults}.
For example, the four posets $a_4^\kappa(1)$, $a_4^\kappa(2)$, $a_4^\kappa(3)$, and $a_4^\kappa(4)$ in Figure \ref{FigA4Example} can be used to construct the four minuscule representations of the simple Lie algebra $\mathfrak{g} = \mathfrak{sl}_5(\mathbb{C})$ of type $A_4$ with respective highest weights $\omega_1$, $\omega_2$, $\omega_3$, and $\omega_4$. 
We note that $a_4^\kappa(1)$ and $a_4^\kappa(4)$ (and similarly $a_4^\kappa(2)$ and $a_4^\kappa(3)$) are isomorphic as $\Gamma$-colored posets, but produce non-isomorphic minuscule representations of $\mathfrak{g}$.

%Thus connected finite $\Gamma$-colored minuscule posets coincide with Proctor's minuscule posets of colored coroots, reduced expressions for the longest element $w_0^{\langle j \rangle}$ of $W^{\langle j \rangle}$ when $\omega_j$ is minuscule, Green's principal subheaps of full heaps, and are precisely the posets needed to combinatorially construct minuscule representations $V(\omega_j)$ of simple Lie algebras.

%Thus connected finite $\Gamma$-colored minuscule posets coincide with Proctor's colored minuscule posets and Green's principal subheaps of full heaps.  Moreover, each can be realized alternatively as a poset of (colored) coroots under the order dual of the standard coroot order.

\end{spacing}

\section*{Acknowledgements}
I would like to thank my thesis advisor, Robert A. Proctor, for many insights into the contents of this paper.  Though much has changed from the corresponding part of my thesis to this paper, his influence is still very much present and appreciated.  I am also grateful to him specifically for help with Section \ref{SectionHistory}.  I would also like to thank Marc Besson and Sam Jeralds for helpful comments on Section \ref{SectionHistory} and the referees for several suggestions for improvement and helpful comments on exposition throughout.

\bibliographystyle{amsplain}

\end{document}